\numberwithin{equation}{section}
\numberwithin{figure}{section}
\theoremstyle{plain}
\newtheorem{thm}{\protect\theoremname}[section]
\theoremstyle{definition}
\newtheorem{defn}[thm]{\protect\definitionname}
\theoremstyle{definition}
\newtheorem{xca}[thm]{\protect\exercisename}
\theoremstyle{plain}
\newtheorem{prop}[thm]{\protect\propositionname}
\theoremstyle{plain}
\newtheorem{lem}[thm]{\protect\lemmaname}
\theoremstyle{plain}
\newtheorem{cor}[thm]{\protect\corollaryname}
\theoremstyle{remark}
\newtheorem{rem}[thm]{\protect\remarkname}
\providecommand{\corollaryname}{Corollary}
\providecommand{\definitionname}{Definition}
\providecommand{\exercisename}{Exercise}
\providecommand{\lemmaname}{Lemma}
\providecommand{\propositionname}{Proposition}
\providecommand{\remarkname}{Remark}
\providecommand{\theoremname}{Theorem}
\begin{document}
\global\long\def\RR{\mathbb{R}}%

\global\long\def\SS{\mathbb{S}}%

\global\long\def\dd{\mathrm{d}}%

\global\long\def\HH{\mathcal{H}}%

\global\long\def\oo{\mathbf{1}}%

\global\long\def\inte{\operatorname{int}}%

\global\long\def\epi{\operatorname{epi}}%

\global\long\def\dom{\operatorname{dom}}%

\global\long\def\supp{\operatorname{supp}}%

\global\long\def\proj{\operatorname{Proj}}%

\global\long\def\Id{\mathrm{Id}}%

\global\long\def\kk{\mathcal{K}^{n}}%

\global\long\def\cvx{\mathrm{Cvx}_{n}}%

\global\long\def\lc{\mathrm{LC}_{n}}%

\global\long\def\conv{\operatorname{conv}}%

\title{On the functional Minkowski problem}
\author{Tomer Falah}
\address{Technion Israel Institute of Technology, Technion City, Haifa 3200003,
Israel}
\email{fatomer@campus.technion.ac.il}
\author{Liran Rotem}
\address{Technion Israel Institute of Technology, Technion City, Haifa 3200003,
Israel}
\email{lrotem@technion.ac.il}
\begin{abstract}
To every log-concave function $f$ one may associate a pair of measures
$(\mu_{f},\nu_{f})$ which are the surface area measures of $f$.
These are a functional extension of the classical surface area measure
of a convex body, and measure how the integral $\int f$ changes under
perturbations. The functional Minkowski problem then asks which pairs
of measures can be obtained as the surface area measures of a log-concave
function. In this work we fully solve this problem. 

Furthermore, we prove that the surface area measures are continuous
in correct topology: If $f_{k}\to f$, then $\left(\mu_{f_{k}},\nu_{f_{k}}\right)\to\left(\mu_{f},\nu_{f}\right)$
in the appropriate sense. Finding the appropriate mode of convergence
of the pairs $\left(\mu_{f_{k}},\nu_{f_{k}}\right)$ sheds a new light
on the construction of functional surface area measures. To prove
this continuity theorem we associate to every convex function a new
type of radial function, which seems to be an interesting construction
on its own right. 

Finally, we prove that the solution to functional Minkowski problem
is continuous in the data, in the sense that if $\left(\mu_{f_{k}},\nu_{f_{k}}\right)\to\left(\mu_{f},\nu_{f}\right)$
then $f_{k}\to f$ up to translations. 
\end{abstract}

\maketitle

\section{Introduction}

\subsection{Surface area measures and the Minkowski problem}

The Minkowski problem is one of the most famous problems in convex
geometry and differential geometry. We begin with recalling some classical
results about it, and about surface area measures in general. For
more general background on convex geometry the reader may consult
e.g. \cite{SchneiderConvex2013}. 

The Minkowski sum of two closed convex sets $K,L\subseteq\RR^{n}$
is defined as 
\[
K+L=\left\{ x+y:\ x\in K,\ y\in L\right\} .
\]
The set $K+L$ is convex, and is closed if $K$ or $L$ are bounded.
If in addition $\lambda>0$, we define the dilation $\lambda K=\left\{ \lambda x:\ x\in K\right\} $.
The support function $h_{K}:\SS^{n-1}\to(-\infty,\infty]$ of a closed
convex set $K$ is given by $h_{K}(y)=\sup_{x\in K}\left\langle x,y\right\rangle $.
These constructions interact well with each other, in the sense that
$\lambda K+\mu K=(\lambda+\mu)K$ and $h_{\lambda K+\mu L}=\lambda h_{K}+\mu h_{L}$.
The (Lebesgue) volume of $K$ will be denoted by $\left|K\right|$. 

By a convex body we mean a compact convex set with non-empty interior,
or equivalently a closed convex set $K\subseteq\RR^{n}$ such that
$0<\left|K\right|<\infty$. For every convex body $K$ there exists
a finite Borel measure $S_{K}$ on the unit sphere $\SS^{n-1}=\left\{ x\in\RR^{n}:\ \left|x\right|=1\right\} $
with the following property: For every closed convex set $L\subseteq\RR^{n}$
we have
\[
\lim_{t\to0^{+}}\frac{\left|K+tL\right|-\left|K\right|}{t}=\int_{\SS^{n-1}}h_{L}\dd S_{K}.
\]
 The measure $S_{K}$ is called the \emph{surface area measure} of
$K$. 

The surface area measure $S_{K}$ can also be defined explicitly.
Let $n_{K}:\partial K\to\SS^{n-1}$ be the Gauss map, associating
to every boundary point $x\in\partial K$ the outer unit normal to
$K$ at $x$. It is known that this outer unit normal is unique $\HH^{n-1}$-almost
everywhere, where $\HH^{n-1}$ denotes the $(n-1$)-dimensional Hausdorff
measure. We then have $S_{K}=\left(n_{K}\right)_{\sharp}\left(\left.\HH^{n-1}\right|_{\partial K}\right)$,
where $\sharp$ denotes the push-forward of a measure. Explicitly,
this means that for every (say continuous) function $\rho:\SS^{n-1}\to\RR$
we have 
\[
\int_{\SS^{n-1}}\rho\dd S_{K}=\int_{\partial K}\left(\rho\circ n_{K}\right)\dd\HH^{n-1}.
\]
 For example, when $K$ is a polytope the measure $S_{K}$ is discrete,
with atoms corresponding to the outer normals to the facets of $K$.
When $\partial K$ is smooth with non-vanishing Gauss curvature the
measure $S_{K}$ is continuous with respect to the $\left.\HH^{n-1}\right|_{\SS^{n-1}}$,
and $\frac{\dd S_{K}}{\dd\HH^{n-1}}(\theta)=\det D^{2}h_{K}(\theta)$,
which is also the inverse of the Gauss curvature of $\partial K$
at the point $n_{K}^{-1}(\theta)$. Here $D^{2}h_{K}=\nabla_{\SS^{n-1}}^{2}h_{K}+h_{K}\cdot\Id$,
where $\nabla_{\SS^{n-1}}^{2}$ is the Riemannian Hessian on $\SS^{n-1}$. 

The Minkowski problem asks which measures on $\SS^{n-1}$ are the
surface area measures of a convex body. Its solution, known as Minkowski's
existence theorem, reads as follows:
\begin{thm}
\label{thm:minkowski}Let $\mu$ be a finite Borel measure on $\SS^{n-1}$.
Then $\mu=S_{K}$ for some convex body $K\subseteq\RR^{n}$ if and
only if $\mu$ satisfies the following two conditions:
\begin{enumerate}
\item $\mu$ is centered, i.e. $\int_{\SS^{n-1}}\left\langle x,\theta\right\rangle \dd\mu(x)=0$
for all $\theta\in\RR^{n}$. 
\item $\mu$ is not supported on any hyperplane.
\end{enumerate}
Moreover, the body $K$ is unique up to translations: If $S_{K}=S_{L}$
then $K=L+v$ for some $v\in\RR^{n}$. 
\end{thm}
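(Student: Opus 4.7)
My plan is to split the argument into necessity, existence, and uniqueness. For \emph{necessity}, both conditions follow directly from the representation $S_{K}=\left(n_{K}\right)_{\sharp}\left(\left.\HH^{n-1}\right|_{\partial K}\right)$. Applying the variational formula with $L=\{v\}$ gives $h_{L}(\theta)=\left\langle v,\theta\right\rangle $; since $K+tL=K+tv$ is a translate of $K$, the left-hand side vanishes and the centering condition follows. For the hyperplane condition, if $\supp S_{K}\subseteq v^{\perp}$ for some nonzero $v$, then Cauchy's projection formula implies $\left|\proj_{v^{\perp}}K\right|=0$, contradicting $\left|K\right|>0$.

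For \emph{existence}, I would adopt a variational approach: minimize $F(K)=\int_{\SS^{n-1}}h_{K}\dd\mu$ over convex bodies $K\subseteq\RR^{n}$ subject to $\left|K\right|\geq 1$. Condition (1) makes $F$ translation-invariant, so minimizing sequences can be freely recentered. Condition (2) supplies coercivity: if the diameters of a minimizing sequence $(K_{m})$ were unbounded, then after recentering $h_{K_{m}}$ would diverge on an open subset of $\SS^{n-1}$ to which $\mu$ assigns positive mass, forcing $F(K_{m})\to\infty$. The Blaschke selection theorem then produces a subsequential Hausdorff limit $K_{0}$, and a further use of (2) prevents $K_{0}$ from collapsing onto a hyperplane. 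Computing the first variation of $F$ with a Lagrange multiplier for the volume constraint, together with the very definition of the surface area measure via the variational formula, yields $S_{K_{0}}=c\mu$ for some $c>0$; a suitable rescaling then produces $K$ with $S_{K}=\mu$.

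For \emph{uniqueness}, I would invoke Minkowski's first inequality, stating $V_{1}(K,L)^{n}\geq\left|K\right|^{n-1}\left|L\right|$ for the mixed volume $V_{1}(K,L)=\frac{1}{n}\int h_{L}\dd S_{K}$, with equality if and only if $K$ and $L$ are homothetic. Assuming $S_{K}=S_{L}$, Euler's identity $\int h_{K}\dd S_{K}=n\left|K\right|$ gives $V_{1}(K,L)=V_{1}(L,L)=\left|L\right|$ and, by symmetry, $V_{1}(L,K)=\left|K\right|$. Substituting into the two instances of Minkowski's inequality forces $\left|K\right|=\left|L\right|$ and equality throughout, so $K$ and $L$ are homothetic bodies of equal volume, hence translates.

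The step I expect to be most delicate is the coercivity and nondegeneracy argument in existence: condition (2) is the only tool available to prevent the minimizer from collapsing to a lower-dimensional set, and exploiting it typically requires the observation that if a minimizing sequence concentrates near a hyperplane $v^{\perp}$, then pushing out a thin cylindrical extension in the $v$-direction strictly decreases $F$ while preserving $\left|K\right|\geq 1$, precisely because $\mu$ assigns positive mass outside $v^{\perp}$.
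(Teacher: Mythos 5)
The paper does not prove Theorem \ref{thm:minkowski} itself --- it cites the two classical proofs in Schneider's book and singles out Alexandrov's variational argument as the model for its functional Theorem \ref{thm:main-Minkowksi} --- and your proposal follows exactly that Alexandrov route (minimize $\int h_{K}\,\dd\mu$ under a volume constraint, Blaschke selection, Alexandrov's lemma for the first variation, Minkowski's first inequality for uniqueness), so the approach matches and the outline is correct. One caveat: your closing heuristic is backwards, since enlarging $K$ by a cylindrical extension can only increase $F(K)=\int h_{K}\,\dd\mu$ (as $\mu\ge0$); no such argument is needed, because once coercivity bounds the diameters, Hausdorff convergence preserves $\left|K\right|\ge1$ and the limit cannot collapse onto a hyperplane, and the coercivity itself is cleanest via the observation that $\theta\mapsto\int_{\SS^{n-1}}\left|\left\langle x,\theta\right\rangle \right|\dd\mu(x)$ is a norm, hence bounded below on $\SS^{n-1}$ uniformly in the direction of elongation --- the same device as the paper's Lemma \ref{lem:functional-bound}.
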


We refer the reader to \cite{SchneiderConvex2013} for two proofs
of Minkowski's theorem. One is Minkowski's original argument for polytopes,
given in Theorem 8.2.1, followed by an approximation argument of Fenchel
and Jessen in Theorem 8.2.2. An alternative proof due to Alexandrov
appears (for a generalized problem) in Theorem 9.2.1. This latter
proof is the one that serves as a motivation for our proof of Theorem
\ref{thm:main-Minkowksi}. The reader may also be interested in the
notes following Section 8.2 for a more comprehensive history of the
problem, including many references. 

When the body $K$ is smooth and $\mu$ has smooth density $f$, the
equation $S_{K}=\mu$ is equivalent to the Monge--Ampère type partial
differential equation
\[
\det\left(D^{2}h_{K}\right)=f.
\]
 Theorem (\ref{thm:minkowski}) is then a statement of existence and
uniqueness of solutions to this equation, at least in a weak sense. 

On the space of convex bodies we have a natural topology, given by
the Hausdorff distance 
\begin{align*}
d_{H}(K,L) & =\inf\left\{ r>0:\ K\subseteq L+\overline{B}_{r}(0)\text{ and }L\subseteq K+\overline{B}_{r}(0)\right\} \\
 & =\max_{\theta\in\SS^{n-1}}\left|h_{K}(\theta)-h_{L}(\theta)\right|,
\end{align*}
 where $\overline{B}_{r}(0)$ denotes the closed ball of radius $r$
around the origin. The surface area measure is weakly continuous in
this topology: If $K_{i}\to K$ then $S_{K_{i}}\to S_{K}$ weakly
(see e.g. \cite[Theorem 4.2.1]{SchneiderConvex2013} for a much more
general result). It turns out that the converse to this statement
is also true, i.e. the solution $K$ to the Minkowski problem is continuous
in the data $\mu$: 
\begin{thm}
\label{thm:continuity-bodies}Let $\left\{ K_{i}\right\} _{i=1}^{\infty},K\subseteq\RR^{n}$
be convex bodies such that $S_{K_{i}}\to S_{K}$ weakly. Then there
exists translations $\left\{ v_{i}\right\} _{i=1}^{\infty}$ such
that $K_{i}+v_{i}\to K$ in the Hausdorff topology. 
\end{thm}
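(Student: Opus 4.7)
The plan is to follow the classical compactness argument: normalize each body by translation, show the resulting sequence is contained in a fixed ball, extract a Hausdorff-convergent subsequence via Blaschke's selection theorem, and identify the limit using both the continuity of $S$ (under Hausdorff convergence, already stated above) and the uniqueness clause of Theorem \ref{thm:minkowski}. The surface area measure is translation invariant, so I may freely replace each $K_i$ by $K_i + v_i$ for any chosen $v_i\in\RR^n$; I would normalize by placing, say, the Steiner point or centroid at the origin, which after Gr\"unbaum's inequality (in the centroid case) implies $h_{K_i}(-\theta)\geq \frac{1}{n}h_{K_i}(\theta)$ for every direction $\theta$.

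The main obstacle is uniform boundedness of the normalized sequence. From weak convergence, $\left|\partial K_i\right| = S_{K_i}(\SS^{n-1}) \to S_K(\SS^{n-1})$, so the total surface areas, and hence by the isoperimetric inequality the volumes, are uniformly bounded. To bound the diameters, I would argue by contradiction: after passing to a subsequence, suppose there are unit vectors $\theta_i\to\theta_0$ with $h_{K_i}(\theta_i)\to\infty$. The normalization forces $h_{K_i}(-\theta_i)\to\infty$ as well, so $K_i$ stretches to infinity along the axis $\RR\theta_0$. Combined with a bounded surface area, an elementary estimate forces $K_i$ to become arbitrarily thin in every direction of $\theta_0^\perp$, so the outer unit normals concentrate near the equator $\left\{\eta\in\SS^{n-1}:\ \langle\eta,\theta_0\rangle=0\right\}$. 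Any weak limit of $S_{K_i}$ would then be supported on this great subsphere (a hyperplane section), contradicting the hypothesis $S_{K_i}\to S_K$ since by Theorem \ref{thm:minkowski} the measure $S_K$ is not supported on any hyperplane. Making this concentration step rigorous, probably by testing $S_{K_i}$ against a continuous function supported away from the equator and applying Cauchy's projection formula to the complementary region, is the technical heart of the proof.

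Given uniform boundedness, Blaschke's selection theorem yields a subsequence $K_{i_k}\to K_\infty$ in the Hausdorff metric, where $K_\infty$ is a convex body (non-degeneracy of $K_\infty$ follows again from the fact that $S_K$ is not supported on a hyperplane, ruling out lower-dimensional limits). By the continuity of $S$ under Hausdorff convergence, $S_{K_{i_k}}\to S_{K_\infty}$ weakly, which combined with the hypothesis gives $S_{K_\infty}=S_K$; the uniqueness clause of Theorem \ref{thm:minkowski} then identifies $K_\infty = K+w$ for some $w\in\RR^n$. Applying this argument to an arbitrary subsequence shows that every subsequence of $\{K_i\}$ has a sub-subsequence converging, after translation, to $K$. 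A standard subsequence argument applied to $d_i = \inf_{v\in\RR^n}d_H(K_i+v,K)$ then yields $d_i\to 0$, and choosing near-optimal $v_i$ completes the proof.
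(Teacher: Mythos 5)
Your overall strategy is the right one, and it is exactly the skeleton the paper has in mind: the paper never proves Theorem \ref{thm:continuity-bodies} (it is stated as folklore, with the remark that a proof can be obtained by adapting the proof of Theorem \ref{thm:functions-cont}), and the proof of Theorem \ref{thm:functions-cont} follows precisely your plan --- normalize by translation, establish a uniform bound, extract a convergent subsequence by a selection theorem, identify the limit via weak continuity of the surface area measure together with the uniqueness half of Theorem \ref{thm:minkowski}, and finish with the subsequence trick. The steps concerning the Blaschke limit, its non-degeneracy, and the final reduction to subsequences are all fine. The one place where your sketch contains an actual error is the boundedness step.

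The assertion that bounded surface area plus diameter tending to infinity along $\theta_{0}$ ``forces $K_{i}$ to become arbitrarily thin in every direction of $\theta_{0}^{\perp}$'' is false for $n\ge4$: the boxes $[0,L]\times[0,1]\times[0,L^{-1/(n-3)}]^{n-2}$ have uniformly bounded surface area and diameter tending to infinity, yet keep width exactly $1$ in the direction $e_{2}\in e_{1}^{\perp}$. The conclusion you actually need --- that any weak limit of $S_{K_{i}}$ is supported on a great subsphere --- does survive, but it should be routed through the volume rather than through widths. From the uniform volume bound $|K_{i}|\le V$ (isoperimetry) and a segment $[q_{i},p_{i}]\subseteq K_{i}$ of length $L_{i}\to\infty$ in direction $u_{i}$, concavity of the fiber-length function over $P_{i}=\proj_{u_{i}^{\perp}}K_{i}$ gives $|K_{i}|\ge2^{-n}L_{i}\left|P_{i}\right|$, hence $\left|P_{i}\right|\le2^{n}V/L_{i}\to0$; Cauchy's projection formula then yields $\int_{\SS^{n-1}}\left|\left\langle u_{i},\eta\right\rangle \right|\dd S_{K_{i}}(\eta)=2\left|P_{i}\right|\to0$, and after passing to $u_{i}\to u_{0}$ (using that $S_{K_{i}}(\SS^{n-1})$ is bounded) one gets $\int\left|\left\langle u_{0},\eta\right\rangle \right|\dd S_{K}=0$, so $S_{K}$ is supported on a hyperplane --- the desired contradiction. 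Alternatively, you can avoid the contradiction argument altogether by imitating Lemma \ref{lem:functional-bound} and Lemma \ref{lem:integral-lower-bound}: with the centroid at the origin, $h_{K_{i}}\ge0$, so $R\theta\in K_{i}$ gives $h_{K_{i}}(\eta)\ge R\left\langle \theta,\eta\right\rangle _{+}$ and, since $S_{K_{i}}$ is centered, $n|K_{i}|=\int h_{K_{i}}\dd S_{K_{i}}\ge\frac{R}{2}\int\left|\left\langle \theta,\eta\right\rangle \right|\dd S_{K_{i}}(\eta)\ge\frac{Rc}{2}$, where $c>0$ is uniform in $i$ and $\theta$ by weak convergence and the non-degeneracy of $S_{K}$; this bounds the circumradius by $2nV/c$ directly, which is the cleaner route and the exact analogue of what the paper does for functions.
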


Theorem \ref{thm:continuity-bodies} appears to be folklore. Several
papers that prove extensions of this result refer to it simply as
``well-known'' (see e.g. \cite{ZhuContinuity2016}, \cite{WangEtAlContinuity2018},
\cite{WangEtAlContinuity2019}), and we were unable trace its origins.
In any case, we are only stating Theorem \ref{thm:continuity-bodies}
as a motivation and will not need to apply it at any point. A proof
can be easily constructed by adapting our proof of Theorem \ref{thm:functions-cont}.

\subsection{Functional surface area measures}

In this work we extend the results above from convex bodies to log-concave
functions. Recall that a function $f:\RR^{n}\to[0,\infty)$ is called
log-concave if for all $x,y\in\RR^{n}$ and $0\le\lambda\le1$ we
have 
\[
f\left((1-\lambda)x+\lambda y\right)\ge f(x)^{1-\lambda}f(y)^{\lambda}.
\]
 In other words, $f$ is log-concave if $f=e^{-\phi}$ for a convex
function $\phi:\RR^{n}\to(-\infty,\infty]$. We will always assume
our convex functions are lower semicontinuous and not identically
$+\infty$, or equivalently that our log-concave functions are upper
semicontinuous and not identically $0$. For every closed convex set
$K\subseteq\RR^{n}$ its indicator $\oo_{K}$ is a log-concave function,
and we would like to consider log-concave functions as ``generalized
convex bodies''. 

This approach, called ``geometrization of probability'' by V. Milman
(see \cite{MilmanGeometrization2008}), proved to be very useful in
convexity over the last two decades. Several open problems in convex
geometry, which seemed completely intractable using purely geometric
tools, were solved by extending the problem to the functional setting
and using analytic and probabilistic tools that only make sense when
dealing with general log-concave functions. A survey of such results
will take us too far from our goal, so let us only mention the recent
solution of Bourgain's slicing problem by Klartag and Lehec (\cite{KlartagLehecAffirmative2024}),
that would have been completely impossible using purely geometric
tools. Surface area measures are related to some of the most important
open problems in convex geometry such as the log-Brunn--Minkowski
problem (see \cite{BoroczkyEtAlLogBrunnMinkowski2012}), and it seems
natural that in order to attack such problem we need to develop the
theory of functional surface area measures. 

We define 
\[
\cvx=\left\{ \phi:\RR^{n}\to(-\infty,\infty]:\ \begin{array}{l}
\phi\text{ is convex, lower semicontinuous,}\\
\text{and }0<\int e^{-\phi}<\infty
\end{array}\right\} ,
\]
and $\lc=\left\{ e^{-\phi}:\ \phi\in\cvx\right\} $. This will be
our extension of the class of convex bodies, with the integral $\int f$
replacing the volume $\left|K\right|$. Note that if $0<\int e^{-\phi}<\infty$
then $\phi$ is \emph{coercive}, i.e. $\phi(x)\ge a\left|x\right|+b$
for some $a>0$ and $b\in\RR$ (see e.g. \cite[Lemma 2.5]{ColesantiFragalaFirst2013}).
The sup-convolution of two log-concave functions $f,g:\RR^{n}\to[0,\infty)$
is defined by 
\[
\left(f\star g\right)(x)=\sup_{y\in\RR^{n}}\left(f(y)g(x-y)\right).
\]
 This function is again log-concave, and is upper semi-continuous
assuming $f$ and $g$ are and at least one of them belongs to $\lc$.
For $\lambda>0$ we define the dilation $\lambda\cdot f$ by $\left(\lambda\cdot f\right)(x)=f\left(\frac{x}{\lambda}\right)^{\lambda}$.
Finally, the support function of $f$ is given by $h_{f}=\left(-\log f\right)^{\ast}$,
where 
\[
\psi^{\ast}(y)=\sup_{x\in\RR^{n}}\left(\left\langle x,y\right\rangle -\psi(x)\right)
\]
 denotes the Legendre transform. There are several good characterization
theorems that explain why these are the natural operations on the
class of log-concave functions -- see e.g. \cite{Artstein-AvidanMilmanConcept2009}
and \cite{RotemSupport2013}. For now we just mention that $\left(\lambda\cdot f\right)\star\left(\mu\cdot f\right)=(\lambda+\mu)\cdot f$
and that $h_{\left(\lambda\cdot f\right)\star\left(\mu\cdot g\right)}=\lambda h_{f}+\mu h_{g}$. 

Given these basic operations we can now consider the same first variation
as before:
\begin{defn}
For $f\in\lc$ and $g:\RR^{n}\to\RR$ an upper semicontinuous log-concave
function (not necessarily with $0<\int g<\infty$) we set 
\[
\delta(f,g)=\lim_{t\to0^{+}}\frac{\int\left(f\star\left(t\cdot g\right)\right)-\int f}{t}.
\]
 
\end{defn}

\begin{thm}
\label{thm:representation}For all $f=e^{-\phi}\in\lc$ and every
upper semicontinuous log-concave function $g$ we have 
\begin{equation}
\delta(f,g)=\int_{\RR^{n}}h_{g}\dd\mu_{f}+\int_{\SS^{n-1}}h_{\supp(g)}\dd\nu_{f}.\label{eq:representation}
\end{equation}
 Here $\mu_{f}$ is the Borel measure on $\RR^{n}$ defined by $\mu_{f}=\left(\nabla\phi\right)_{\sharp}\left(f\dd x\right)$
and $\nu_{f}$ is the Borel measure on $\SS^{n-1}$ defined by $\nu_{f}=\left(n_{\supp(f)}\right)_{\sharp}\left(f\dd\!\left.\HH^{n-1}\right|_{\partial\supp(f)}\right)$.
We refer to the pair $(\mu_{f},\nu_{f})$ as the \emph{surface area
measures} of $f$. 
\end{thm}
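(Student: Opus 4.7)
The plan is to reduce everything to an infimal-convolution calculation, split the integration domain into an \emph{interior} piece and a thin \emph{boundary shell} of width $O(t)$, and match these two pieces with the two terms on the right-hand side of \eqref{eq:representation}. Write $g=e^{-\psi}$, so that $t\cdot g=e^{-\psi_{t}}$ with $\psi_{t}(x)=t\psi(x/t)$ and $f\star(t\cdot g)=e^{-F_{t}}$, where $F_{t}=\phi\,\square\,\psi_{t}$ is the infimal convolution, equivalently $F_{t}=(\phi^{\ast}+t\psi^{\ast})^{\ast}$. Thus $F_{0}=\phi$, $\{F_{t}<\infty\}=\dom(\phi)+t\dom(\psi)=\supp(f)+t\supp(g)$, and $\psi^{\ast}=h_{g}$.

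The first step is an \emph{interior pointwise expansion}. For $x\in\inte(\supp(f))$, the substitution $y=x-tz$ gives $F_{t}(x)=\inf_{z}\bigl(\phi(x-tz)+t\psi(z)\bigr)$. Assuming first that $\phi\in C^{2}$ is strictly convex on $\inte(\supp(f))$ and that $\psi$ is proper, a direct envelope-type calculation yields
\[
F_{t}(x)=\phi(x)-t\sup_{z}\bigl(\langle\nabla\phi(x),z\rangle-\psi(z)\bigr)+o(t)=\phi(x)-t\,h_{g}(\nabla\phi(x))+o(t).
\]
Exponentiating, applying dominated convergence (the domination comes from log-concavity and coercivity of $\phi$, which also controls $e^{-F_{t}}\le e^{-\phi+t\|\psi\|_{\infty}}$ in bounded situations), and then performing the change of variables $y=\nabla\phi(x)$ gives
\[
\frac{1}{t}\int_{\supp(f)}\bigl(e^{-F_{t}(x)}-e^{-\phi(x)}\bigr)\,\dd x\longrightarrow\int_{\RR^{n}}h_{g}(\nabla\phi(x))f(x)\,\dd x=\int_{\RR^{n}}h_{g}\,\dd\mu_{f}.
\]

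The second step is the \emph{boundary shell}. The shell $S_{t}=\bigl(\supp(f)+t\supp(g)\bigr)\setminus\supp(f)$ has width $O(t)$, and on it the minimizer in the infimal convolution approaches a boundary point of $\supp(f)$, so $F_{t}(x)\to\phi(x_{0})$ whenever $x=x_{0}+sv$ with $x_{0}\in\partial\supp(f)$, $v\in\supp(g)$, $s=O(t)$. Parametrizing $S_{t}$ via the Gauss map of $\supp(f)$, a point $x\in S_{t}$ can be written as $x_{0}+s\,n_{\supp(f)}(x_{0})$ with $s\in[0,t\,h_{\supp(g)}(n_{\supp(f)}(x_{0}))]$ (outside a set of measure zero), and the Jacobian is $1+O(t)$. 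The coarea / tube formula then gives
\[
\frac{1}{t}\int_{S_{t}}e^{-F_{t}(x)}\,\dd x\longrightarrow\int_{\partial\supp(f)}f(x_{0})\,h_{\supp(g)}\bigl(n_{\supp(f)}(x_{0})\bigr)\,\dd\HH^{n-1}(x_{0})=\int_{\SS^{n-1}}h_{\supp(g)}\,\dd\nu_{f},
\]
where the last equality is the definition of $\nu_{f}$ as a push-forward.

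The final step is an \emph{approximation argument}. To upgrade from the smooth case to general $f\in\lc$ and general upper semicontinuous log-concave $g$, I would regularize $\phi$ by the Moreau envelope (or inf-convolve with a small quadratic) to ensure $C^{2}$ strict convexity on the interior, truncate $g$ to a compactly supported $g_{R}=g\cdot\oo_{B_{R}}$, and verify that both sides of \eqref{eq:representation} are continuous under these regularizations. The main obstacle is making the $o(t)$ in the interior expansion \emph{uniform enough} in $x$ to justify interchanging limit and integration globally; a convex combination bound on $\phi(x-tz)$ combined with the coercivity of $\phi$ should give a pointwise Lipschitz-in-$t$ majorant. A secondary subtlety is the treatment of the portion of $\partial\supp(f)$ on which $\phi=+\infty$: there $\nu_{f}$ puts no mass by construction, and one must show that the corresponding part of the shell $S_{t}$ contributes $o(t)$ to the integral, which follows because $F_{t}\to+\infty$ on that portion as $t\to0^{+}$.
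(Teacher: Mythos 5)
First, a point of reference: the paper does not prove Theorem \ref{thm:representation} at all. It is quoted as known, with the general case attributed to \cite{RotemSurface2022,RotemAnisotropic2023} and the smooth case to \cite{ColesantiFragalaFirst2013}. Your sketch follows the natural strategy of those references (linearize the infimal convolution in the interior, isolate a boundary shell of width $O(t)$), and the two heuristic limits you identify are indeed the correct sources of the two terms in \eqref{eq:representation}. But as written the argument has gaps that are not merely technical, and the most serious one is the closing approximation step.

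You propose to regularize $\phi$ (Moreau envelope) and truncate $g$, then ``verify that both sides of \eqref{eq:representation} are continuous under these regularizations.'' Term by term, this is false: a Moreau envelope of $\phi$ is finite on all of $\RR^{n}$, so the regularized function has $\nu\equiv0$, whereas $\nu_{f}$ may be nonzero; the Exercise in the introduction of the paper ($\phi_{k}(x)=\max(k\left|x\right|-k,0)$ converging to $\oo_{[-1,1]}^{\infty}$, with $\nu_{e^{-\phi_{k}}}=0$ but $\nu_{e^{-\phi}}=\delta_{1}+\delta_{-1}$) exhibits exactly this failure. Mass migrates between $\mu$ and $\nu$ under regularization, and controlling that migration is the content of the paper's Theorem \ref{thm:measures-cont} (cosmic convergence of the pair), which is itself a main result requiring substantial machinery; you cannot invoke ``continuity of both sides'' as a routine verification. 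The earlier steps also need repair. The shell $(K+tL)\setminus K$ is not parametrized, even up to measure zero and a $1+O(t)$ Jacobian, by normal segments $x_{0}+s\,n_{K}(x_{0})$ with $0\le s\le t\,h_{L}(n_{K}(x_{0}))$ when $K$ is merely convex and possibly unbounded; the first-order tube formula holds only after integration and with error terms that must be weighted by $e^{-F_{t}}$ from the start, and the claim $e^{-F_{t}}\to f(x_{0})$ on the shell conflates the lower semicontinuous boundary value of $\phi$ with limits of the inf-convolution from the interior, which is precisely the delicate point in the non--essentially-continuous case. Finally, the domination $e^{-F_{t}}\le e^{-\phi+t\left\Vert \psi\right\Vert _{\infty}}$ is unavailable because $\psi=-\log g$ is typically unbounded, and both sides of \eqref{eq:representation} can equal $+\infty$ (e.g.\ $\int h_{g}\,\dd\mu_{f}=+\infty$ is possible), so the interior interchange of limit and integral must rest on a monotonicity or Fatou-type argument rather than dominated convergence. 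In short: the skeleton is the right one, but each of the three steps needs an argument you have not supplied, and the third, as stated, would fail.
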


A few comments are in order. First, the measures $\mu_{f}$ and $\nu_{f}$
are always well defined for all $f\in\lc$. For $\mu_{f}$, this is
because $\phi$ is a convex function and therefore differentiable
almost everywhere on the set $\left\{ x:\ \phi(x)<\infty\right\} =\left\{ x:\ f(x)>0\right\} $.
For $\nu_{f}$, note that the support 
\[
\supp(f)=\overline{\left\{ x\in\RR^{n}:\ f(x)>0\right\} }
\]
 is a closed convex set with non-empty interior (that may be unbounded).
Therefore the Gauss map $n_{\supp(f)}:\partial\supp(f)\to\SS^{n-1}$
is well-defined $\HH^{n-1}$-almost everywhere. Of course we may have
$\supp(f)=\RR^{n}$, and then $\partial\supp(f)=\emptyset$; In this
case we set $\nu_{f}\equiv0$. 

Regarding the history of Theorem \ref{thm:representation}, the first
variation $\delta(f,g)$ was first considered by Klartag and Milman
in \cite{KlartagMilmanGeometry2005} in the special case $f(x)=e^{-\left|x\right|^{2}/2}$.
This notion was further studied in \cite{RotemMean2012}, where (\ref{eq:representation})
was proved for this special choice of a function $f$. At the same
time and independently, Colesanti and Fragalà studied the first variation
$\delta(f,g)$ for general log-concave functions (\cite{ColesantiFragalaFirst2013}).
Among other results, they proved formula (\ref{eq:representation})
under some strong smoothness and regularity assumptions on the functions
$f$ and $g$. Cordero-Erausquin and Klartag (\cite{Cordero-ErausquinKlartagMoment2015})
studied the measure $\mu_{f}$ under the name ``the moment measure
of $\phi=-\log f$''. They concentrated on the case $\nu_{f}\equiv0$,
and while they did not prove (\ref{eq:representation}) in this case
they proved a result similar in spirit. Note that $\nu_{f}\equiv0$
if and only if $f\equiv0$ $\HH^{n-1}$-almost everywhere on $\partial\supp(f)$,
which is equivalent to saying that $f$ is continuous $\HH^{n-1}$-almost
everywhere. Therefore \cite{Cordero-ErausquinKlartagMoment2015} named
this condition \emph{essential continuity}. Finally, formula (\ref{eq:representation})
was proven in full generality by the second named author (\cite{RotemSurface2022},
\cite{RotemAnisotropic2023}). 

Finally, we caution the reader that the support function appears in
formula (\ref{eq:representation}) in two different senses. The function
$h_{g}:\RR^{n}\to(-\infty,\infty]$ is the support function in the
sense of log-concave functions, i.e. $h_{g}=(-\log g)^{\ast}$. However,
$\supp(g)$ is a closed convex set, and its support function $h_{\supp(g)}:\SS^{n-1}\to\RR$
is in the classical meaning. These two support functions are related
though: the \emph{horizon function} $\overline{\psi}:\SS^{n-1}\to(-\infty,\infty]$
of a convex function $\psi:\RR^{n}\to(-\infty,\infty]$ is defined
as 
\begin{equation}
\overline{\psi}(\theta)=\lim_{\lambda\to\infty}\frac{\psi(p+\lambda\theta)}{\lambda},\label{eq:horizon-def}
\end{equation}
 where $p$ is an arbitrary point such that $\psi(p)<\infty$. Then
$\overline{\psi}$ is well-defined and independent of $p$ (see \cite[Theorem 3.21]{RockafellarWetsVariational1998}).
We then have $h_{\supp(g)}=\overline{h_{g}}$ (\cite[Theorem 11.5]{RockafellarWetsVariational1998}). 

\subsection{Our main theorems}

We can now state our first main theorem, a complete solution of the
Minkowski problem for functional surface area measures:
\begin{thm}
\label{thm:main-Minkowksi}Let $\mu$ be a finite Borel measure on
$\RR^{n}$ and $\nu$ be a finite Borel measure on $\SS^{n-1}$. Then
there exists a log-concave function $f\in\lc$ such that $(\mu_{f},\nu_{f})=(\mu,\nu)$
if and only if the pair $(\mu,\nu)$ satisfies the following conditions: 
\begin{enumerate}
\item $\mu$ is not identically $0$. 
\item $\mu$ has finite first moment, and the measure $\mu+\nu$ is centered
in the sense that for every $\theta\in\SS^{n-1}$ we have 
\[
\int_{\RR^{n}}\left\langle x,\theta\right\rangle \dd\mu(x)+\int_{\SS^{n-1}}\left\langle x,\theta\right\rangle \dd\nu(x)=0.
\]
\item $\mu$ and $\nu$ are not supported on a common hyperplane $H\subseteq\RR^{n}$. 
\end{enumerate}
In this case, the function $f$ is unique up to translations: If $g\in\lc$
also satisfies $\left(\mu_{g},\nu_{g}\right)=\left(\mu,\nu\right)$
then there exists $v\in\RR^{n}$ such that $g(x)=f(x+v)$. 
\end{thm}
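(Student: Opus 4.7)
I would handle necessity, existence, and uniqueness separately, with existence being the heart of the argument (done by an Alexandrov-style variational method) and uniqueness following from the equality case of Pr\'ekopa--Leindler. For \textbf{necessity}, each of the three conditions is obtained by plugging a tailored test function $g$ into \eqref{eq:representation}. Condition (1) is immediate since $\int f>0$ forces $\phi=-\log f$ to be differentiable on a set of positive measure, so $\mu_f$ is nontrivial. For the centering in (2), apply \eqref{eq:representation} with $g=\oo_{\{v\}}$ for arbitrary $v\in\RR^{n}$: then $t\cdot g=\oo_{\{tv\}}$ and $f\star(t\cdot g)=f(\cdot-tv)$ is a translate of $f$, so $\delta(f,g)=0$, while $h_{g}(y)=\langle v,y\rangle$ and $h_{\supp(g)}(\theta)=\langle v,\theta\rangle$ yield the centering identity (finiteness of the first moment of $\mu_{f}$ follows from the coercivity of $\phi$ recorded after the definition of $\cvx$). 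For (3), if $\mu_{f}$ and $\nu_{f}$ both lived on a hyperplane, (2) would force it to be some $\theta^{\perp}$; then $\mu_{f}\subseteq\theta^{\perp}$ makes $\partial_{\theta}\phi=0$ a.e.\ on $\{\phi<\infty\}$, and $\nu_{f}\subseteq\theta^{\perp}$ forces the outer normal of $\supp(f)$ to avoid $\pm\theta$, together rendering $f$ invariant under $x\mapsto x+t\theta$ and contradicting $\int f<\infty$.

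For \textbf{existence}, consider the functional
\[
\Phi(f)=\int_{\RR^{n}}h_{f}\,\dd\mu+\int_{\SS^{n-1}}h_{\supp(f)}\,\dd\nu
\]
and minimize it over $\{f\in\lc:\int f=1\}$ modulo translations. By linearity of $h_{f}$ and $h_{\supp(f)}$ under $\star$ and $\cdot$ one has the exact identity
\[
\Phi\bigl(f\star(t\cdot g)\bigr)=\Phi(f)+t\left(\int h_{g}\,\dd\mu+\int h_{\supp(g)}\,\dd\nu\right),
\]
and by (2) the functional $\Phi$ is invariant under translation $f\mapsto f(\cdot-v)$, legitimizing the quotient. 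Given a minimizing sequence $\{f_{k}\}$ I would fix translates (placing a suitable barycenter at the origin) and extract a limit $f_{0}\in\lc$; conditions (1) and (3) together with the uniform bound on $\Phi(f_{k})$ must rule out the possible degenerations (escape to infinity, Dirac-like concentration, flattening onto a hyperplane). Given a minimizer $f_{0}$, a Lagrange multiplier computation---differentiating $\Phi$ and $\int f$ along $f_{0}\star(t\cdot g)$ via the identity above and \eqref{eq:representation}---yields
\[
\int h_{g}\,\dd\mu_{f_{0}}+\int h_{\supp(g)}\,\dd\nu_{f_{0}}=c\left(\int h_{g}\,\dd\mu+\int h_{\supp(g)}\,\dd\nu\right)
\]
for every upper semicontinuous log-concave $g$, where testing against the scaling $f_{0}\mapsto\lambda f_{0}$ pins down $c=\mu(\RR^{n})>0$. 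Letting $g$ range over a rich enough family (indicators $\oo_{K}$ of compact convex bodies together with smooth compactly supported densities) separates the two integrals and gives $\mu_{f_{0}}=c\mu$, $\nu_{f_{0}}=c\nu$; rescaling $f_{0}\mapsto f_{0}/c$ then produces the desired solution.

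For \textbf{uniqueness}, suppose $f_{1},f_{2}\in\lc$ both realize $(\mu,\nu)$ and are normalized to $\int f_{i}=1$, so both minimize $\Phi$. With $f_{s}=\bigl((1-s)\cdot f_{1}\bigr)\star\bigl(s\cdot f_{2}\bigr)$, Minkowski-linearity makes $\Phi(f_{s})$ affine in $s$ with $\Phi(f_{s})=\Phi(f_{1})$, while the normalized competitor $\tilde{f}_{s}=f_{s}/\int f_{s}$ satisfies $\Phi(\tilde{f}_{s})=\Phi(f_{s})-\mu(\RR^{n})\log\int f_{s}$; since $\tilde{f}_{s}$ is admissible and $\Phi$ is minimized by $f_{1}$ we obtain $\log\int f_{s}\le0$, while Pr\'ekopa--Leindler gives $\int f_{s}\ge1$, so $\int f_{s}=1$ identically in $s$, which is the equality case of Pr\'ekopa--Leindler and forces $f_{2}=f_{1}(\cdot-v)$ for some $v\in\RR^{n}$. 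The genuinely delicate step is the \textbf{compactness} part of existence: log-concave functions admit several distinct routes to degeneration and each must be excluded using (1)--(3); moreover the horizon integral $\int h_{\supp(f)}\,\dd\nu$ must be controlled even when $\supp(f)$ is unbounded---a feature absent from the essentially continuous (moment-measure) case treated in \cite{Cordero-ErausquinKlartagMoment2015}---and this is where I expect the new radial-function construction advertised in the abstract to enter.
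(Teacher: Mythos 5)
Your overall strategy (variational, with the functional $\Phi(f)=\int h_{f}\,\dd\mu+\int h_{\supp(f)}\,\dd\nu$ minimized subject to $\int f=1$) is essentially the paper's: on the constraint set your $\Phi$ coincides with the paper's $F(\phi)=\int\phi^{\ast}\dd\mu+\int\overline{\phi^{\ast}}\dd\nu-\mu(\RR^{n})\log\int e^{-\phi}$, and the necessity arguments via test functions $g=\oo_{\{v\}}$ match Proposition \ref{prop:minkowksi-necessary}. But there is a genuine gap at the Euler--Lagrange step. You differentiate $\Phi$ and $\int f$ only along the curves $t\mapsto f_{0}\star(t\cdot g)$ and invoke the first variation formula \eqref{eq:representation}; that formula is a \emph{one-sided} derivative ($t\to0^{+}$), and the family $\{f_{0}\star(t\cdot g)\}$ only exists for $t\ge0$ since $\lc$ is a cone, not a group, under $\star$. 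At a minimum, a one-sided derivative gives only the variational \emph{inequality} $\int h_{g}\dd\mu+\int h_{\supp(g)}\dd\nu\ge c\,\delta(f_{0},g)$, not the equality you write, and there is no ``$-g$'' to test against to upgrade it. This is exactly the obstruction the paper's Section \ref{sec:alexandrov} exists to remove: one must perturb as $(\phi^{\ast}+t\xi)^{\ast}$ for $t$ of \emph{both} signs (for $t<0$ and $\xi=h_{L}$ this becomes an erosion, with level sets $K_{u}\ominus|t|L$) and prove the two-sided derivative exists, which the paper does via Matheron's lemma on Minkowski differences (Theorem \ref{thm:alexandrov-support}) together with Proposition \ref{prop:alexandrov-bounded} for bounded $\xi$. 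Your proposal contains no substitute for this, and it is precisely the new ingredient needed beyond the essentially continuous case.

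Two secondary points. First, the compactness step is left entirely as a wish list; the paper's mechanism is the concrete coercivity estimate of Lemma \ref{lem:functional-bound} (conditions (2) and (3) give $\int|\langle x,y\rangle|\dd\mu+\int|\langle x,y\rangle|\dd\nu\ge c|y|$, hence $\phi_{k}(x)\ge a|x|+b$ uniformly along a minimizing sequence), followed by the Li--Mussnig selection theorem for epi-convergence; the curvilinear radial function you invoke plays no role here --- it is used only for the continuity Theorem \ref{thm:measures-cont}. Second, your uniqueness argument assumes that every $f$ with $(\mu_{f},\nu_{f})=(\mu,\nu)$ is a minimizer of $\Phi$, which is not established (and again would require two-sided derivative information to deduce from criticality); the equality-case-of-Pr\'ekopa--Leindler mechanism is the right one, but the paper simply cites the known uniqueness result from \cite{RotemAnisotropic2023}.
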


The uniqueness part of Theorem \ref{thm:main-Minkowksi} was already
known -- it was proved in \cite{ColesantiFragalaFirst2013} under
regularity assumptions on $f$ and $g$, in \cite{Cordero-ErausquinKlartagMoment2015}
in the essentially continuous case, and in \cite{RotemAnisotropic2023}
for general $f,g\in\lc$. As for the existence part, when $\nu\equiv0$
Theorem \ref{thm:main-Minkowksi} was proved by Cordero-Erausquin
and Klartag (\cite{Cordero-ErausquinKlartagMoment2015}), with another
proof given by Santambrogio (\cite{SantambrogioDealing2016}). Variants
of the functional Minkowski problem, changing either the volume functional
or the addition operation, were studied by Huang, Liu, Xi and Zhao
(\cite{HuangEtAlDual2024}), Fang, Ye, Zhang, Zhao (\cite{FangEtAlDual2025}),
Fang, Ye, Zhang (\cite{FangEtAlRiesz2024}) and by the second named
author (\cite{RotemSurface2022}). However, to the best of our knowledge
all known results were partial results, in the sense that they deal
with the essentially continuous case $\nu\equiv0$. Dealing with general
log-concave functions does require new ideas, as we'll see below.

The functional Minkowski problem may be written as a pair of partial
differential equations. Indeed, assume $\mu$ has density $\rho_{1}:\RR^{n}\to\RR$
and $\nu$ has density $\rho_{2}:\SS^{n-1}\to\RR$. If $f=e^{-\phi}\in\lc$
is a smooth function on its smooth support $K=\supp(f)$, then the
change of variables formula shows that $\phi$ and $K$ solve the
system 
\begin{equation}
\begin{cases}
\rho_{1}\left(\nabla\phi(x)\right)\det\left(\left(\nabla^{2}\phi\right)(x)\right)=e^{-\phi(x)} & \text{for all }x\in\inte\left(K\right)\\
\frac{\rho_{2}\left(n_{K}(x)\right)}{\det\left(\left(D^{2}h_{K}\right)(n_{K}(x))\right)}=e^{-\phi(x)} & \text{for all }x\in\partial K.
\end{cases}\label{eq:minkowksi-pde}
\end{equation}
 Note that the domain $K$ is not given, but both $K$ and $\phi$
are the unknowns in this system of equations. Also note that $\det\left(\left(D^{2}h_{K}\right)(n_{K}(x))\right)^{-1}$
is simply the Gauss curvature of $K$ at the point $x\in\partial K$.
Theorem \ref{thm:main-Minkowksi} then states that under our assumptions
on $\rho_{1}$ and $\rho_{2}$ the system (\ref{eq:minkowksi-pde})
has a unique solution, at least in a weak sense. Studying the regularity
of our solution and whether it is also a classical solution to (\ref{eq:minkowksi-pde})
is an interesting problem beyond the scope of this paper. We refer
the reader to \cite{KlartagLogarithmicallyConcave2014} for some remarks
and references regarding regularity in the case $\nu\equiv0$ and
to \cite{UlivelliEntire2023} for a related result regarding the regularity
of the weighted Minkowski problem. 

Next, we discuss continuity of the surface area measures. On the class
$\cvx$ we have a natural notion of convergence called epi-convergence
-- see Definition \ref{def:epi-convergence} below and the discussion
following it. This gives us a topology on $\lc$ in the obvious way.
In \cite{KlartagLogarithmicallyConcave2014}, Klartag essentially
proved that if $f_{k}\to f$ then $\mu_{f_{k}}\to\mu_{f}$ weakly
(He claimed a weaker result, which does not use the notion of epi-convergence,
but the same ideas can be used to prove the more general statement).
However, the same cannot be true for the boundary measure $\nu_{f}$,
as a simple example shows:
\begin{xca}
Define $\left\{ \phi_{k}\right\} _{k=1}^{\infty}:\RR\to(-\infty,\infty]$
by $\phi_{k}(x)=\max\left(k\left|x\right|-k,0\right)$. Then $\phi_{k}\to\phi$,
where 
\[
\phi(x)=\oo_{[-1,1]}^{\infty}(x)=\begin{cases}
0 & x\in[-1,1]\\
+\infty & \text{otherwise.}
\end{cases}
\]

However $\nu_{e^{-\phi_{k}}}=0$ and $\nu_{e^{-\phi}}=\delta_{1}+\delta_{-1}\ne0$,
so clearly we cannot have $\nu_{e^{-\phi_{k}}}\to\nu_{e^{-\phi}}$
in any reasonable sense. 
\end{xca}

Indeed, it makes no sense to treat the measures $\mu_{f}$ and $\nu_{f}$
separately. Instead, the correct claim should be that if $f_{k}\to f$
then $\left(\mu_{f_{k}},\nu_{f_{k}}\right)\to\left(\mu,\nu\right)$
as a pair. We therefore make the following definitions: 
\begin{defn}
\label{def:cosmically-continuous}We say a function $\xi:\RR^{n}\to\RR$
is \emph{cosmically continuous} if:
\begin{enumerate}
\item $\xi$ is continuous in the usual sense on $\RR^{n}$.
\item The limit $\overline{\xi}(\theta)=\lim_{\lambda\to\infty}\frac{\xi(\lambda\theta)}{\lambda}$
exists (in the finite sense) \emph{uniformly} in $\theta\in\SS^{n-1}$. 
\end{enumerate}
\end{defn}

\begin{defn}
\label{def:cosmic-conv}Let $\left\{ \mu_{k}\right\} _{k=1}^{\infty}$
and $\mu$ be finite Borel measures on $\RR^{n}$, and let $\left\{ \nu_{k}\right\} _{k=1}^{\infty}$
and $\nu$ be finite Borel measures on $\SS^{n-1}$. We say that $(\mu_{k},\nu_{k})\to(\mu,\nu)$
\emph{cosmically} if for every cosmically continuous function $\xi:\RR^{n}\to\RR$
we have 
\[
\int_{\RR^{n}}\xi\dd\mu_{k}+\int_{\SS^{n-1}}\overline{\xi}\dd\nu_{k}\xrightarrow{k\to\infty}\int_{\RR^{n}}\xi\dd\mu+\int_{\SS^{n-1}}\overline{\xi}\dd\nu.
\]
\end{defn}

A similar class of functions appeared in the work of Ulivelli (\cite{UlivelliFirst2024})
under the notation $C_{rec}\left(\RR^{n}\right)$, for similar reasons.
However, functions in $C_{rec}\left(\RR^{n}\right)$ satisfy the stronger
assumption that $\left|\xi-\overline{\xi}\right|$ is bounded, when
$\overline{\xi}$ is considered as a $1$-homogeneous function on
$\RR^{n}$. We explain in Section \ref{sec:cosmic} why our definitions
appear to be the natural ones, as well as the origin of the name ``cosmic'':
The pair of measures $(\mu,\nu)$ can be naturally identified with
a single measure $\lambda$ on a compactification of $\RR^{n}$ known
as its cosmic closure (\cite[Chapter 3A]{RockafellarWetsVariational1998}).
Cosmic convergence $\left(\mu_{k},\nu_{k}\right)\to\left(\mu,\nu\right)$
is then nothing more than standard weak convergence $\lambda_{k}\to\lambda$. 

With the correct definition in place, we can state our second main
theorem: 
\begin{thm}
\label{thm:measures-cont}Fix $\left\{ f_{k}\right\} _{k=1}^{\infty},f\in\lc$
such that $f_{k}\to f$. Then $(\mu_{f_{k}},\nu_{f_{k}})\to(\mu_{f},\nu_{f})$
cosmically. 
\end{thm}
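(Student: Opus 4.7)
The plan is to combine the representation formula in Theorem~\ref{thm:representation} with a continuity property of the first variation $\delta(\cdot,g)$ under epi-convergence, and then to upgrade the resulting convergence against convex test functions to the full cosmic convergence via a compactness-plus-density argument on the cosmic closure $\overline{\RR^{n}}$ introduced in Section~\ref{sec:cosmic}.

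First I would prove $\delta(f_{k},g)\to\delta(f,g)$ for every upper semicontinuous log-concave $g$. Fixing $t\ge 0$, set $F_{k}(t)=\int(f_{k}\star(t\cdot g))$ and $F(t)=\int(f\star(t\cdot g))$. Then $F_{k}(t)\to F(t)$, because sup-convolution is epi-continuous (via Legendre duality, which is itself epi-continuous and turns infimum convolution into addition) and because $f\mapsto\int f$ is epi-continuous on $\lc$ (essentially \cite{KlartagLogarithmicallyConcave2014}). A functional Prékopa--Leindler inequality shows that a suitable power of $F_{k}$ is concave in $t$, so $\delta(f_{k},g)=\sup_{t>0}(F_{k}(t)-F_{k}(0))/t$ is a supremum of a quantity monotone in $t$; combined with pointwise $k$-convergence, this sup-representation permits the interchange of the limits in $t$ and in $k$. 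I expect this to be the main technical obstacle: for a log-concave $g$ with unbounded support or with $-\log g$ not coercive, the difference quotient need not be uniformly controlled, and the interchange becomes delicate. Presumably the new radial function attached to a convex function (advertised in the abstract) is exactly designed to sidestep this, by giving a description of $(\mu_{f},\nu_{f})$ in which both components are simultaneously visible and manifestly epi-continuous.

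By Theorem~\ref{thm:representation}, the content of the previous step rewrites as
\[
\int_{\RR^{n}}h_{g}\,\dd\mu_{f_{k}}+\int_{\SS^{n-1}}\overline{h_{g}}\,\dd\nu_{f_{k}}\;\longrightarrow\;\int_{\RR^{n}}h_{g}\,\dd\mu_{f}+\int_{\SS^{n-1}}\overline{h_{g}}\,\dd\nu_{f}.
\]
Applying this with $g=\oo_{\overline{B}_{1}(0)}$, for which $h_{g}(y)=|y|$ and $\overline{h_{g}}\equiv 1$, together with a bump-type $g$ that makes $h_{g}$ bounded, produces uniform bounds on $\int|x|\,\dd\mu_{f_{k}}+\nu_{f_{k}}(\SS^{n-1})$ and on the total mass $\mu_{f_{k}}(\RR^{n})$. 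Identifying each pair $(\mu_{f_{k}},\nu_{f_{k}})$ with a finite Borel measure $\lambda_{k}$ on the compact cosmic closure $\overline{\RR^{n}}$, the sequence $\{\lambda_{k}\}$ is thus bounded and hence weak-$\ast$ precompact.

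To identify any subsequential cosmic limit $\lambda^{*}$ with $\lambda_{f}$, I would observe that $\lambda^{*}$ and $\lambda_{f}$ agree against every cosmically continuous test function of the form $h_{g}$. As $g$ ranges over upper semicontinuous log-concave functions with bounded support, these $h_{g}$ exhaust the convex cosmically continuous functions on $\RR^{n}$; their differences $h_{g_{1}}-h_{g_{2}}$ form a vector lattice in $C(\overline{\RR^{n}})$ (DC functions are closed under $\max$ and $\min$) which contains the constants and separates points of $\overline{\RR^{n}}$. The Kakutani--Stone form of the Stone--Weierstrass theorem then makes this lattice dense in $C(\overline{\RR^{n}})$, forcing $\lambda^{*}=\lambda_{f}$. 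Uniqueness of subsequential limits upgrades this to cosmic convergence of the full sequence, which is the desired statement.
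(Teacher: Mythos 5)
Your architecture (compactness of the measures $\widehat{\mu_{f_k}}+\widehat{\nu_{f_k}}$ on the cosmic closure, identification of subsequential limits against a dense lattice of test functions) is reasonable, but the entire argument rests on the first step, $\delta(f_k,g)\to\delta(f,g)$, and that step is not established; the mechanism you propose for it does not work. By Pr\'ekopa--Leindler the function $t\mapsto F_k(t)=\int f_k\star(t\cdot g)$ is \emph{log}-concave, not concave, so the difference quotient $\bigl(F_k(t)-F_k(0)\bigr)/t$ need not be monotone and your sup-representation should read $\delta(f_k,g)=F_k(0)\sup_{t>0}\bigl(\log F_k(t)-\log F_k(0)\bigr)/t$. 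More importantly, even with the corrected monotone representation, pointwise convergence $F_k(t)\to F(t)$ only yields $\liminf_k \sup_t a_k(t)\ge\sup_t\lim_k a_k(t)$, i.e.\ the lower semicontinuity $\liminf_k\delta(f_k,g)\ge\delta(f,g)$. The reverse inequality is exactly an interchange of $\lim_{t\to0^+}$ and $\lim_{k\to\infty}$, which monotonicity in $t$ alone cannot justify (take $a_k(t)=a(t)+\max(0,1-kt)$: decreasing in $t$, pointwise convergent to $a$, yet $\sup_t a_k=\sup_t a+1$). The exercise in the introduction shows this is not a removable technicality: for $\phi_k(x)=\max(k|x|-k,0)\to\oo_{[-1,1]}^{\infty}$ the boundary contribution appears only in the limit, so any proof must capture mass escaping to the equator of $\overline{\SS_{-}^{n}}$, and you explicitly defer this ("presumably the new radial function\ldots is exactly designed to sidestep this"). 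There are also secondary gaps in the same step: epi-continuity of $\phi\mapsto\phi^{\ast}+th_g$ and of $\phi\mapsto\int e^{-\phi}$ require uniform coercivity hypotheses that you would need to verify when $g$ has unbounded support.

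For comparison, the paper does not attempt to prove continuity of $\delta(\cdot,g)$ for log-concave $g$ at all. It proves convergence of $\int\xi\,\dd\mu_{f_k}+\int\overline{\xi}\,\dd\nu_{f_k}$ directly for every cosmically continuous $\xi$, by rewriting the pair $(\mu_f,\nu_f)$ as the single pushforward $\bigl(n_{\epi(\phi)}\bigr)_{\sharp}\bigl(e^{-t}\,\dd\HH^{n}|_{\partial\epi(\phi)}\bigr)$, parametrizing $\partial\epi(\phi)$ over $\RR^{n}$ via the curvilinear radial function $s_{\phi}$, and then applying dominated convergence with an explicit integrable majorant $C\log^{2n+1}(1+|u|)/|u|^{n+1}$. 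In particular, this handles the whole family of test functions at once and makes your compactness-plus--Stone--Weierstrass superstructure unnecessary. The hard analytic content of the theorem is precisely the step your proposal leaves open.
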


Finally, once we have Theorems \ref{thm:main-Minkowksi} and \ref{thm:measures-cont},
it is very natural to ask if the converse is also true, and if the
solution $f$ to the functional Minkowski problem is continuous in
the data $(\mu,\nu)$. This turns out to be true, and is our final
main result: 
\begin{thm}
\label{thm:functions-cont}Fix $\left\{ f_{k}\right\} _{k=1}^{\infty},f\in\lc$
and assume that $(\mu_{f_{k}},\nu_{f_{k}})\to(\mu_{f},\nu_{f})$ cosmically.
Then there exists a sequence of translations $\widetilde{f}_{k}(x)=f_{k}(x+v_{k})$
such that $\widetilde{f}_{k}\to f$. 
\end{thm}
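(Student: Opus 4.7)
The plan is to use the classical subsequence-and-uniqueness strategy underlying Theorem \ref{thm:continuity-bodies}: it suffices to show that every subsequence of $\{f_k\}$ has a further subsequence that, after appropriate translations, epi-converges to $f$. A standard diagonal argument then upgrades this to convergence of the whole (translated) sequence. My first observation is that both $\mu_f$ and $\nu_f$ are invariant under translation of $f$: a direct change of variables in the formulas of Theorem \ref{thm:representation} shows that if $\widetilde{f}(x) = f(x+v)$, then $\mu_{\widetilde{f}} = \mu_f$ and $\nu_{\widetilde{f}} = \nu_f$. Hence I am free to choose any translations $v_k$ without affecting the cosmic limit of the data.

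Before attacking precompactness I would extract a priori estimates directly from cosmic convergence. Applying Definition \ref{def:cosmic-conv} with the cosmically continuous function $\xi\equiv 1$ (for which $\overline{\xi}\equiv 0$) gives
\[
\int f_k = \mu_{f_k}(\RR^{n}) \longrightarrow \mu_f(\RR^{n}) = \int f,
\]
so the integrals $\int f_k$ are bounded above and below away from zero. Applying it with $\xi(x)=|x|$ (for which $\overline{\xi}\equiv 1$) shows that $\int |x|\,\dd\mu_{f_k}(x) + \nu_{f_k}(\SS^{n-1})$ is bounded, giving uniform control on both the first moment of $\mu_{f_k}$ (i.e.\ on $\int |\nabla\phi_k| f_k$) and on the total boundary mass $\nu_{f_k}(\SS^{n-1}) = \int_{\partial \supp(f_k)} f_k\,\dd\HH^{n-1}$.

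The main obstacle is the precompactness step. I would translate each $f_k$ so that some canonical center---say the barycenter of $f_k$, or the minimizer of $\phi_k = -\log f_k$---sits at the origin, and then prove that the resulting sequence $\widetilde{f}_k$ has an epi-convergent subsequence whose limit $g$ lies in $\lc$, ruling out both collapse ($g\equiv 0$) and explosion ($\int g = +\infty$). The a priori bounds from the previous paragraph, combined with standard coercivity estimates for log-concave functions with controlled integral, should yield uniform upper bounds for $\widetilde{\phi}_k$ on compact sets and uniform coercivity at infinity; the radial function construction promised in the abstract appears to be the natural tool for packaging these estimates, playing the role that support function estimates play in the classical proof of Theorem \ref{thm:continuity-bodies}.

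Once precompactness is in hand, the rest is bookkeeping. Passing to a subsequence, $\widetilde{f}_{k_j}$ epi-converges to some $g \in \lc$; by Theorem \ref{thm:measures-cont} together with the translation invariance noted above,
\[
(\mu_g,\nu_g) \;=\; \lim_j \bigl(\mu_{\widetilde{f}_{k_j}}, \nu_{\widetilde{f}_{k_j}}\bigr) \;=\; \lim_j \bigl(\mu_{f_{k_j}}, \nu_{f_{k_j}}\bigr) \;=\; (\mu_f,\nu_f)
\]
cosmically, hence in particular weakly. The uniqueness clause of Theorem \ref{thm:main-Minkowksi} then forces $g(x) = f(x+w)$ for some $w \in \RR^{n}$; replacing $v_k$ by $v_k - w$ along this subsequence produces translates converging to $f$, and applying the subsequence principle to the original sequence yields the theorem.
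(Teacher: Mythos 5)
Your overall architecture is exactly the paper's: extract a subsequence, prove precompactness in the epi-topology, pass to a limit $g\in\lc$, use Theorem \ref{thm:measures-cont} plus translation invariance of $(\mu_f,\nu_f)$ to get $(\mu_g,\nu_g)=(\mu_f,\nu_f)$, and invoke the uniqueness part of Theorem \ref{thm:main-Minkowksi}. The a priori estimates you extract from cosmic convergence (testing against $\xi\equiv 1$ and $\xi(x)=|x|$) are also precisely the ones the paper uses. The endgame is fine.

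The genuine gap is the precompactness step, which you acknowledge as ``the main obstacle'' but then dispose of with ``standard coercivity estimates for log-concave functions with controlled integral should yield\dots''. There is no such standard estimate: a bound on $\int f_k$ from above and below, even together with a bound on $\int|x|\,\dd\mu_{f_k}+\nu_{f_k}(\SS^{n-1})$, does not by itself give the uniform coercivity $\phi_k(x)\ge a|x|+b$ needed for the selection theorem (Theorem \ref{thm:selection}); one must rule out both elongation of the $f_k$ along a subspace and the formation of tall spikes. The paper's argument needs three specific ingredients you do not supply. First, a \emph{uniform-in-$k$} non-degeneracy constant $c>0$ with $\int|\langle x,y\rangle|\,\dd\mu_{f_k}+\int|\langle x,y\rangle|\,\dd\nu_{f_k}\ge c|y|$ (Lemma \ref{lem:integral-lower-bound}); this follows from cosmic convergence applied to $x\mapsto|\langle x,y\rangle|$ together with local uniform convergence of convex functions, and it is what prevents elongation. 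Second, feeding $\psi=\phi_k$ into the coercivity bound of Lemma \ref{lem:functional-bound}(\ref{enu:functional-bound}) and recognizing, via the first variation formula, that $\int\phi_k^{\ast}\,\dd\mu_{f_k}+\int\overline{\phi_k^{\ast}}\,\dd\nu_{f_k}=\delta(f_k,f_k)=n\int f_k+\int f_k\log f_k$ (Proposition \ref{prop:d(f,f)}). Third, an upper bound on $\max f_k$ to control the entropy term $\int f_k\log f_k$; this is where the functional isoperimetric inequality (Proposition \ref{prop:isoperimetric}) enters, combined with the boundedness of $\delta(f_k,\oo_{B_2^n})$ that you already derived. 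Without these three steps the argument does not close. Also, your guess that the curvilinear radial function is the tool for this step is misdirected: that construction is used to prove Theorem \ref{thm:measures-cont}, not Theorem \ref{thm:functions-cont}.
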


In the very special case where $\nu_{f_{k}}\equiv0$ for all $k$
and all the measures $\mu_{f_{k}}$ are supported on a compact set
(the same compact set for all $k$), Theorem \ref{thm:functions-cont}
was previously proved by Klartag (\cite{KlartagLogarithmicallyConcave2014}). 

\subsection{Proof ideas and the structure of this paper}

Very often, Minkowski type theorems are proven using a variational
argument: One defines a functional $F$, and proves that $F$ attains
a minimum and that this minimum is the required solution to the Minkowski
problem. 

In our case, we define $F:\cvx\to(-\infty,\infty]$ by 
\[
F(\phi)=\int_{\RR^{n}}\phi^{\ast}\dd\mu+\int_{\SS^{n-1}}\overline{\phi^{\ast}}\dd\nu-\mu(\RR^{n})\log\int_{\RR^{n}}e^{-\phi}.
\]
 This is inspired by both Alexandrov's proof of the classical Minkowski
Theorem \ref{thm:minkowski}, and by the result of \cite{Cordero-ErausquinKlartagMoment2015}
that used the same functional in the case $\nu=0$ to prove the functional
Minkowski theorem in this case. 

In order to prove that $F$ attains a minimum we equip $\cvx$ with
the topology of epi-convergence, and prove that every minimizing sequence
for $F$ must be uniformly coercive. From there we use known compactness
results to prove that our minimizing sequence has a convergent sub-sequence,
and therefore that $F$ attains a minimum. We believe that thanks
to the use of epi-convergence our argument here is simpler and more
transparent than the corresponding argument of \cite{Cordero-ErausquinKlartagMoment2015},
even though we are dealing with general measures $\mu$ and $\nu$. 

Next, one needs to prove that the minimizer of $F$ solves the Minkowski
problem. This essentially requires computing the derivative $\left.\frac{\dd}{\dd t}\right|_{t=0}F\left((\phi^{\ast}+t\xi)^{\ast}\right)$
for a large enough class of functions $\xi:\RR^{n}\to\RR$. In the
case of convex bodies the computation of the corresponding derivative
relies on the so-called Alexandrov lemma -- see e.g. \cite[Lemma 7.5.3]{SchneiderConvex2013}.
We therefore need a functional version of this lemma, which is far
from trivial. In \cite{UlivelliFirst2024} Ulivelli proved a functional
Alexandrov lemma, using a weighted version of Alexandrov's lemma from
\cite{KryvonosLangharstWeighted2023}. Unfortunately, Ulivelli's theorem
has some assumptions that make it unsuitable for our goal, most crucially
that the convex functions involved have compact domain. Luckily, we
do not need to compute the derivative for all $\xi$ , just for a
large enough family of functions. We do so in Section \ref{sec:alexandrov},
surprisingly using a theorem of Matheron about the Minkowski difference
of convex bodies. Then in Section \ref{sec:Minkowski} we carry out
the proof that $F$ attains a minimizer and use the results of Section
\ref{sec:alexandrov} to conclude that this minimizer is the sought
after solution to the Minkowski theorem.

Next, in Section \ref{sec:cosmic}, we study cosmically continuous
functions and cosmic convergence. We explain where these terms come
from and how to view the pair $(\mu_{f},\nu_{f})$ as a single measure
on the cosmic closure of $\RR^{n}$. This is crucial for the proof
of Theorem \ref{thm:measures-cont}, but we believe it is also of
independent interest. For example, it allows us to rewrite the first
variation formula (\ref{eq:representation}) as 
\begin{equation}
\delta(e^{-\phi},g)=\int_{\partial\epi(\phi)}\widehat{h_{g}}\left(n_{\epi(\phi)}(x,t)\right)e^{-t}\dd\HH^{n}(x,t),\label{eq:variation-intro}
\end{equation}
 where $\widehat{h_{g}}$ is essentially the function $h_{g}$ extended
to the cosmic closure of $\RR^{n}$. Note that in this formula we
no longer have any explicit ``boundary term''. Similar ideas appeared
in \cite{UlivelliFirst2024}, but as far as we can tell our definitions
and the formula above are new.

In Section \ref{sec:continuity} we prove Theorem \ref{thm:measures-cont}.
To explain the main idea behind the proof it is useful to sketch a
proof of the weak continuity of the surface area measure for convex
bodies. This specific result can also be obtained using mixed volumes,
but slightly modifying the problem makes this approach impossible,
so various variants of the following argument appeared in the literature
(See e.g. \cite[Theorem 3.4]{HuangEtAlMinkowski2021}, \cite[Lemma 3.12]{UlivelliFirst2024},
and \cite[Lemma 6]{SchneiderWeighted2024}). Assume $K_{i}\to K$.
We want to prove that $S_{K_{i}}\to S_{K}$, or that for all continuous
functions $\rho:\SS^{n-1}\to\RR$ we have 
\[
\int_{\partial K_{i}}\rho\left(n_{K_{i}}(x)\right)\dd\HH^{n-1}(x)=\int_{\SS^{n-1}}\rho\dd S_{K_{i}}\to\int_{\SS^{n-1}}\rho\dd S_{K}=\int_{\partial K}\rho\left(n_{K}(x)\right)\dd\HH^{n-1}(x).
\]
 To do so we transform the domain of integration from $\partial K$
(or $\partial K_{i}$) to $\SS^{n-1}$ using the change of variables
$\theta\mapsto r_{K}(\theta)\theta$, where $r_{K}(\theta)=\max\left\{ \lambda>0:\ \lambda\theta\in K\right\} $
is the radial function of $K$. Once all the integrals involved are
on the same domain $\SS^{n-1}$ , it is enough to prove that the integrands
converge almost everywhere and use the dominated convergence theorem. 

We want to argue in a similar way using integrals of the form (\ref{eq:variation-intro}),
so we need a cleverly chosen change of variables which transforms
the domain of integration from $\partial\epi(\phi)$ to $\RR^{n}$.
Known constructions do not seem to be appropriate here, so we define
a new kind of a radial function which we call the \emph{curvilinear
radial function} of $\phi$, and show that it can be used to parametrize
$\partial\epi(\phi)$. Constructing this new radial function and proving
its basic properties take up the majority of Section \ref{sec:continuity},
and once this is done Theorem \ref{thm:measures-cont} follows using
the scheme mentioned above. We believe our new radial function can
have more applications, for example in the proof of a more general
Alexandrov-type lemma. 

Finally, in Section \ref{sec:reverse-continuity} we prove Theorem
\ref{thm:functions-cont}. The argument uses again a compactness result
for $\lc$, Theorem \ref{thm:measures-cont}, and the uniqueness part
of Theorem \ref{thm:main-Minkowksi}. This is not very different than
known arguments for theorems like Theorem \ref{thm:continuity-bodies},
but working with log-concave functions does add some difficulties.
In particular, we note as a curious fact that the proof relies on
an isoperimetric inequality for log-concave functions, proved in \cite{MilmanRotemMixed2013}
-- see Proposition \ref{prop:isoperimetric}. Another necessary ingredient
is the computation of $\delta(f,f)$ for $f\in\lc$ -- for convex
bodies this is trivial since volume is $n$-homogeneous, but it is
less obvious for log-concave functions, and indeed $\delta(f,f)$
is not proportional to $\int f$. Luckily the computation of $\delta(f,f)$
was already carried out in \cite{ColesantiFragalaFirst2013} -- see
Proposition \ref{prop:d(f,f)}. 

\subsection{Acknowledgements}

The authors were supported by ISF grant number 2574/24 and NSF-BSF
grant number 2022707.

\section{\label{sec:alexandrov}A partial Alexandrov lemma}

In our solution of the Minkowski problem we will need to compute derivatives
of the form 
\begin{equation}
\left.\frac{\dd}{\dd t}\right|_{t=0}\int_{\RR^{n}}e^{-\left(\phi^{\ast}+t\xi\right)^{\ast}},\label{eq:two-sided-der}
\end{equation}
 where $\phi\in\cvx$ and $\xi:\RR^{n}\to\RR$ is a ``nice enough''
function. The function $\phi^{\ast}+t\xi$ is not necessarily convex,
but we still compute its Legendre transform using the standard formula,
i.e. 
\[
\left(\phi^{\ast}+t\xi\right)^{\ast}(y)=\sup_{x\in\RR^{n}}\left(\left\langle x,y\right\rangle -\left(\phi^{\ast}(x)+t\xi(x)\right)\right).
\]

Now that if $\xi\in\cvx$ and if we define $f=e^{-\phi}$ and $g=e^{-\xi^{\ast}}$
, then $e^{-\left(\phi^{\ast}+t\xi\right)^{\ast}}=f\star\left(t\cdot g\right)$
for all $t>0$. Therefore in this case the derivative we are trying
to compute is exactly the one from the first variation formula (\ref{eq:representation}).
However, formula (\ref{eq:representation}) concerns only positive
values of $t$ (i.e. the right derivative), and for us it would be
crucial to have a two-sided derivative. 

In the case that $\phi^{\ast}$ is the support function of a convex
body and $\xi$ is continuous and $1$-homogeneous, the computation
of (\ref{eq:two-sided-der}) is exactly the well-known Alexandrov
lemma, which is a standard ingredient in the proof of the classical
Minkowski problem. Our goal is therefore to prove a functional extension
of this lemma. As was already mentioned in the introduction such a
functional Alexandrov lemma was recently proved in \cite{UlivelliFirst2024},
but its assumptions are not suitable for our goals. 

Computing the derivative (\ref{eq:two-sided-der}) for\emph{ all}
continuous functions $\xi:\RR^{n}\to\RR$ is a non-trivial task, which
we do not need for our goal so we will not carry out here. Instead
we will only discuss two partial cases. The first case is relatively
straightforward and was already computed in \cite[Proposition 5.4]{RotemAnisotropic2023}:
\begin{prop}
\label{prop:alexandrov-bounded}Fix $\phi\in\cvx$ and define $f=e^{-\phi}$.
Assume that $\xi:\RR^{n}\to\RR$ is \textbf{bounded} and continuous.
Then 
\[
\left.\frac{\dd}{\dd t}\right|_{t=0}\int_{\RR^{n}}e^{-\left(\phi^{\ast}+t\xi\right)^{\ast}}=\int_{\RR^{n}}\xi\dd\mu_{f}.
\]
\end{prop}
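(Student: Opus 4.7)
The plan is to differentiate under the integral sign. Writing $\psi_{t}:=\phi^{\ast}+t\xi$, so that $\psi_{0}^{\ast}=\phi^{\ast\ast}=\phi$, the task reduces to two items: producing an $L^{1}$ envelope for the difference quotient $t^{-1}\bigl(e^{-\psi_{t}^{\ast}(y)}-e^{-\phi(y)}\bigr)$ valid for all sufficiently small $|t|$, and identifying its pointwise limit almost everywhere.

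For the envelope, set $M=\sup_{x}|\xi(x)|$. Inserting the crude bound $|t\xi(x)|\le |t|M$ inside the Legendre sup gives $|\psi_{t}^{\ast}(y)-\phi(y)|\le |t|M$ for every $y\in\RR^{n}$. Combining this with the mean value theorem for $e^{-s}$ yields
\[
\left|\frac{e^{-\psi_{t}^{\ast}(y)}-e^{-\phi(y)}}{t}\right|\le M\cdot e^{|t|M}\cdot f(y),
\]
an integrable dominator once $|t|$ is kept bounded.

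The substantive step is the pointwise limit. Fix $y$ at which $\phi$ is differentiable, which holds Lebesgue-a.e.\ on $\{f>0\}$; outside that set $f\equiv 0$ and the envelope above shows the integrand vanishes identically, so nothing needs to be proved there. For each small $t$ pick a maximizer $x_{t}$ in the definition of $\psi_{t}^{\ast}(y)$. Such a maximizer exists because $\phi\in\cvx$ is coercive (the linear lower bound $\phi(x)\ge a|x|+b$ holds by \cite[Lemma 2.5]{ColesantiFragalaFirst2013}), which forces $\dom\phi^{\ast}$ to be bounded and the sup to be attained by upper semicontinuity on a compact set. The key claim is that $x_{t}\to\nabla\phi(y)$ as $t\to 0$. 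Indeed, the two-sided envelope and the elementary inequality $\langle x_{t},y\rangle-\phi^{\ast}(x_{t})\le\phi(y)$ together force
\[
\langle x_{t},y\rangle-\phi^{\ast}(x_{t})\longrightarrow\phi(y)\qquad (t\to 0),
\]
and then lower semicontinuity of $\phi^{\ast}$ shows that any subsequential limit of $\{x_{t}\}$ (which exists by boundedness of $\dom\phi^{\ast}$) belongs to $\partial\phi(y)=\{\nabla\phi(y)\}$.

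With this convergence in hand the two-sided derivative is pinned down by the squeeze
\[
\phi(y)-t\xi(\nabla\phi(y))\le\psi_{t}^{\ast}(y)\le\phi(y)-t\xi(x_{t})\qquad (t>0)
\]
(the first inequality from testing the sup at $\nabla\phi(y)$, the second from $\langle x_{t},y\rangle-\phi^{\ast}(x_{t})\le\phi(y)$), together with the reversed version for $t<0$; continuity of $\xi$ at $\nabla\phi(y)$ makes both outer terms tend to $-\xi(\nabla\phi(y))$. Dominated convergence then gives
\[
\left.\frac{\dd}{\dd t}\right|_{t=0}\int_{\RR^{n}}e^{-\psi_{t}^{\ast}}=\int_{\RR^{n}}\xi(\nabla\phi(y))\,f(y)\,\dd y=\int_{\RR^{n}}\xi\,\dd\mu_{f},
\]
the last equality being the definition of $\mu_{f}$ as a pushforward. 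The only non-routine piece of the argument is the selection statement $x_{t}\to\nabla\phi(y)$, which rests crucially on boundedness of $\dom\phi^{\ast}$; everything else is standard envelope-and-squeeze.
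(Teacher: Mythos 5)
The paper itself does not prove this proposition; it cites \cite[Proposition 5.4]{RotemAnisotropic2023}, so there is no internal argument to compare against. Your overall strategy (two-sided envelope $|\psi_t^{\ast}-\phi|\le|t|M$, squeeze at differentiability points of $\phi$, dominated convergence) is sound, and most of the steps check out: the dominator, the Fenchel--Young squeeze, and the identification of the limit with $\int\xi\,\dd\mu_f$ are all correct.

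However, the step you yourself single out as the crucial one rests on a false claim. Coercivity of $\phi$ does \emph{not} imply that $\dom\phi^{\ast}$ is bounded; it implies the opposite-flavored statement that $0\in\inte(\dom\phi^{\ast})$. Indeed, from $\phi(x)\ge a|x|+b$ one gets $\phi^{\ast}\le -b$ on $\overline{B}_a(0)$, which places a ball \emph{inside} $\dom\phi^{\ast}$ rather than confining it. A concrete counterexample is $\phi(x)=|x|^{2}$, which lies in $\cvx$ and has $\phi^{\ast}(y)=|y|^{2}/4$ with $\dom\phi^{\ast}=\RR^{n}$. Boundedness of $\dom\phi^{\ast}$ is equivalent to $\phi$ being globally Lipschitz, which is not assumed. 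Consequently both the existence of the maximizers $x_t$ and the boundedness of the family $\{x_t\}$ are unjustified as written.

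The gap is repairable, because both facts are only needed at points $y$ where $\phi$ is differentiable. At such a point the convex, lower semicontinuous function $g(x)=\phi^{\ast}(x)-\left\langle x,y\right\rangle$ attains its infimum $-\phi(y)$ exactly on $\partial\phi(y)=\{\nabla\phi(y)\}$, a nonempty bounded set; by the standard fact that a proper convex function with one nonempty bounded sublevel set has all sublevel sets bounded, $g$ is coercive. This gives attainment of the sup defining $\psi_t^{\ast}(y)$ (the objective is $-g(x)-t\xi(x)$, upper semicontinuous and tending to $-\infty$), and it gives boundedness of any minimizing family, so your argument that subsequential limits of $\{x_t\}$ lie in $\partial\phi(y)$ then goes through. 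With that substitution the proof is correct.
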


The second case we will need is the case where $\xi=h_{L}$ for a
convex body $L\subseteq\RR^{n}$. For this case we need to recall
the definition of the \emph{Minkowski difference}: For convex bodies
$A,B\subseteq\RR^{n}$ their Minkowski difference is defined by 
\[
A\ominus B=\left\{ x\in\RR^{n}:\ x+B\subseteq A\right\} =\bigcap_{x\in B}\left(A-x\right).
\]
 In \cite{MatheronFormule1978}, Matheron proved the following facts
about the Minkowski difference:
\begin{lem}
\label{lem:mathron-difference}Fix convex bodies $A,B\subseteq\RR^{n}$
and consider the function 
\[
\beta(t)=\begin{cases}
\left|A+tB\right| & t\ge0\\
\left|A\ominus\left(\left|t\right|B\right)\right| & t<0,
\end{cases}
\]
(defined on a ray ($-t_{0},\infty)$ such that $\beta(t)>0$ on $(-t_{0},\infty)$
). Then: 
\begin{enumerate}
\item \label{enu:matheron-differentiable}$\beta$ is (two-sided) differentiable
at $t=0$. 
\item \label{enu:matheron-bound}$\beta$ is convex, so in particular $\left|A+tB\right|-\left|A\right|\ge\left|A\right|-\left|A\ominus tB\right|$
for all small enough $t>0$. 
\end{enumerate}
\end{lem}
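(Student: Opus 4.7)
I would split the analysis at $t=0$ and handle the two halves separately before gluing them.

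On $[0, \infty)$, the classical Steiner formula writes
\[
\beta(t) = \left|A + tB\right| = \sum_{k=0}^n \binom{n}{k} V(A[n-k], B[k]) t^k,
\]
a polynomial in $t$ with nonnegative mixed-volume coefficients. Hence $\beta$ is smooth and convex on this half-line, with right derivative at zero equal to $nV(A[n-1], B) = \int_{\SS^{n-1}} h_B \,\dd S_A$.

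On the negative side, write $s = -t \in [0, t_0)$ and set $g(s) = \left|A \ominus sB\right|$. The main task is to prove
\[
\lim_{s \to 0^+} \frac{\left|A\right| - g(s)}{s} = nV(A[n-1], B),
\]
so that the left derivative of $\beta$ at zero matches the right. The lower bound is immediate from the inclusion $(A \ominus sB) + sB \subseteq A$: applying Steiner's formula to $A \ominus sB$ and $B$ gives $\left|A\right| \geq g(s) + s \cdot nV((A \ominus sB)[n-1], B) + O(s^2)$, and then Hausdorff continuity $A \ominus sB \to A$ as $s \to 0^+$ combined with continuity of mixed volumes produces the desired $\liminf$ bound. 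The matching $\limsup$ bound I would first verify for polytopes by direct computation: for small $s$, the body $A \ominus sB$ is cut out by translating each facet of $A$ (with outer normal $u_F$) inward by $s\,h_B(u_F)$, yielding $\left|A\right| - \left|A \ominus sB\right| = s\int h_B \,\dd S_A + O(s^2)$. The general case follows by approximating $A$ by polytopes $A_k \to A$ and invoking weak continuity $S_{A_k} \to S_A$ together with Hausdorff continuity of $A \mapsto A \ominus sB$ uniformly for $s$ in a small interval.

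Once differentiability at $s=0$ is established, one extends to any interior point $s_0 \in (0, t_0)$ via the semigroup identity $A \ominus (s_0 + s)B = (A \ominus s_0 B) \ominus sB$, which reduces the analysis at $s_0$ to the analysis at $0$ with $A$ replaced by $A \ominus s_0 B$. This yields $g'(s) = -nV((A \ominus sB)[n-1], B)$ for every $s \in [0, t_0)$. Since $A \ominus sB$ is monotone decreasing in $s$ (in the inclusion sense) and mixed volumes are monotone in each argument, $g'$ is nondecreasing, so $g$ is convex on $[0, t_0)$. Hence $\beta$ is convex on each half-interval separately, and matching one-sided derivatives at $0$ promote this to convexity on the entire interval $(-t_0, \infty)$. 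Part (\ref{enu:matheron-bound}) is then the three-point chord inequality with equal weights: $2\beta(0) \leq \beta(-t) + \beta(t)$ rearranges to $\left|A+tB\right| - \left|A\right| \geq \left|A\right| - \left|A \ominus tB\right|$.

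The main obstacle is the $\limsup$ half of the differentiability claim at $s=0$. The $\liminf$ drops out of the inclusion $(A \ominus sB) + sB \subseteq A$ together with Steiner's formula essentially for free, but there is no comparably cheap inclusion controlling how much mass $A \ominus sB$ can lose, so the polytope approximation together with uniform Hausdorff continuity of the Minkowski difference must do the actual work.
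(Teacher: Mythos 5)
First, note that the paper itself gives no proof of this lemma: it is quoted verbatim from Matheron \cite{MatheronFormule1978}, so your attempt has to be judged on its own merits rather than against an argument in the text. Much of your skeleton is sound. The dilation side via the Steiner polynomial is correct; the inclusion $(A\ominus sB)+sB\subseteq A$ does give the $\liminf$ bound for free; the description of $A\ominus sB$ for a polytope $A$ as $\bigcap_F\{x:\left\langle x,u_F\right\rangle\le h_A(u_F)-sh_B(u_F)\}$ is exactly right; and the semigroup identity $A\ominus(s_0+s)B=(A\ominus s_0B)\ominus sB$ together with monotonicity of mixed volumes and the final chord inequality are all fine. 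You also correctly identify the crux as the $\limsup$ half of the derivative at $s=0$.

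The gap is precisely in the step you wave at: ``the general case follows by approximating $A$ by polytopes.'' Uniform Hausdorff continuity of $A\mapsto A\ominus sB$ gives you $g_k(s)=\left|A_k\ominus sB\right|\to g(s)=\left|A\ominus sB\right|$ uniformly on a small interval, and weak continuity of surface area measures gives $g_k'(0)\to-\int h_B\,\dd S_A$; but differentiability at a point does not pass to uniform limits ($g_k(s)=\sqrt{s^2+k^{-2}}-k^{-1}\to\left|s\right|$ is the standard counterexample). What you would actually need is the expansion $\left|A_k\right|-\left|A_k\ominus sB\right|=s\int h_B\,\dd S_{A_k}+o(s)$ with an error that is $o(s)$ \emph{uniformly in $k$}, and your polytope computation does not supply this: the second-order error there is governed by the combinatorial data of $A_k$ (number of facets, dihedral angles) and degenerates as $A_k\to A$. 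Convexity cannot rescue the limit either, since for convex $\beta_k\to\beta$ one only gets $\beta_-'(0)\le\liminf_k\beta_{k,-}'(0)$ and $\limsup_k\beta_{k,+}'(0)\le\beta_+'(0)$, which is the wrong direction for forcing $\beta_-'(0)=\beta_+'(0)$. Note also that the gap propagates into part (2): your formula $g'(s_0)=-nV\bigl((A\ominus s_0B)[n-1],B\bigr)$ requires the differentiability statement for the general convex body $A\ominus s_0B$, even when $A$ is a polytope. The clean repair is to observe that $A\ominus sB$ is the Wulff shape (Aleksandrov body) of $h_A-sh_B$ while $A+\left|s\right|B$ is the Wulff shape of $h_A+\left|s\right|h_B$, so part (1) is exactly the case $f=-h_B$ of Aleksandrov's lemma \cite[Lemma 7.5.3]{SchneiderConvex2013}, which is genuinely two-sided; alternatively one follows Matheron's original argument.
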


Using this lemma we prove:
\begin{thm}
\label{thm:alexandrov-support}Fix $\phi\in\cvx$ and define $f=e^{-\phi}$.
Then for all convex bodies $L\subseteq\RR^{n}$ we have 
\begin{equation}
\left.\frac{\dd}{\dd t}\right|_{t=0}\int_{\RR^{n}}e^{-\left(\phi^{\ast}+th_{L}\right)^{\ast}}=\int_{\RR^{n}}h_{L}\dd\mu_{f}+\int_{\SS^{n-1}}h_{L}\dd\nu_{f}.\label{eq:alexandrov-support}
\end{equation}
 
\end{thm}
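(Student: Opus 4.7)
The strategy is to decompose $\int e^{-(\phi^{\ast}+th_{L})^{\ast}}$ on both sides of $t=0$ via the layer-cake formula and to apply Matheron's Lemma \ref{lem:mathron-difference} fiber-wise in the level parameter. This will yield a genuine two-sided derivative at $t=0$, whose value is forced to equal the one-sided (right) derivative already supplied by Theorem \ref{thm:representation}.

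For $t>0$ the function $\phi^{\ast}+th_{L}$ is convex and standard Legendre duality gives $(\phi^{\ast}+th_{L})^{\ast}(y)=\inf_{z\in y-tL}\phi(z)$, so that $e^{-(\phi^{\ast}+th_{L})^{\ast}}=f\star(t\cdot\oo_{L})$. Applying Theorem \ref{thm:representation} to the upper semicontinuous log-concave function $g=\oo_{L}$ (for which $h_{g}=h_{L}$ on $\RR^{n}$ and $h_{\supp g}=h_{L}$ on $\SS^{n-1}$), we get
\[
\lim_{t\to0^{+}}\frac{\int e^{-(\phi^{\ast}+th_{L})^{\ast}}-\int f}{t}=\int_{\RR^{n}}h_{L}\,\dd\mu_{f}+\int_{\SS^{n-1}}h_{L}\,\dd\nu_{f},
\]
the desired right-hand side of \eqref{eq:alexandrov-support}. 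It therefore suffices to prove that the derivative at $t=0$ exists two-sidedly.

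For $t=-s<0$, a direct computation using $\phi^{\ast\ast}=\phi$ and $sh_{L}(x)=\sup_{w\in sL}\langle x,w\rangle$ gives
\[
(\phi^{\ast}-sh_{L})^{\ast}(y)=\sup_{w\in sL}\phi(y+w),
\]
so $e^{-(\phi^{\ast}-sh_{L})^{\ast}(y)}=\inf_{z\in y+sL}f(z)$. Let $K_{\tau}=\{f>\tau\}$. Coercivity of $\phi$ makes $K_{\tau}$ a convex body for a.e. $\tau\in(0,\|f\|_{\infty})$, and one checks that $\{y:e^{-(\phi^{\ast}-sh_{L})^{\ast}(y)}>\tau\}=K_{\tau}\ominus sL$, while for $t>0$ the superlevel set equals $K_{\tau}+tL$. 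The layer-cake formula therefore yields, for all small $t$,
\[
\int_{\RR^{n}}e^{-(\phi^{\ast}+th_{L})^{\ast}}=\int_{0}^{\infty}\beta_{\tau}(t)\,\dd\tau,
\]
where $\beta_{\tau}$ is exactly the function from Lemma \ref{lem:mathron-difference} applied with $A=K_{\tau}$ and $B=L$.

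By Matheron's lemma, each $\beta_{\tau}$ is convex and two-sided differentiable at $0$, so $\beta_{\tau}'(0)$ exists fiber-wise. Convexity makes the difference quotients $(\beta_{\tau}(t)-\beta_{\tau}(0))/t$ monotone in $t$; hence on $(-t_{0},t_{0})\setminus\{0\}$ (for a small fixed $t_{0}>0$) they are bounded in absolute value by
\[
\max\!\left(\frac{\beta_{\tau}(t_{0})-\beta_{\tau}(0)}{t_{0}},\ \frac{\beta_{\tau}(0)}{t_{0}}\right).
\]
Both bounds are $\tau$-integrable: $\int\beta_{\tau}(0)\,\dd\tau=\int f<\infty$, while $\int\beta_{\tau}(t_{0})\,\dd\tau=\int f\star(t_{0}\cdot\oo_{L})$ is finite because the coercivity $\phi(x)\geq a|x|+b$ forces $f\star(t_{0}\cdot\oo_{L})\in\lc$. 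Dominated convergence then delivers
\[
\left.\frac{\dd}{\dd t}\right|_{t=0}\int e^{-(\phi^{\ast}+th_{L})^{\ast}}=\int_{0}^{\infty}\beta_{\tau}'(0)\,\dd\tau
\]
as a genuine two-sided derivative, whose value must coincide with the right derivative computed above. The main obstacle in this plan is the dominated-convergence step --- in particular the integrability of the Minkowski-sum dominator $\int f\star(t_{0}\cdot\oo_{L})$ --- which is precisely where coercivity of $\phi$ is essential; the corresponding bound on the Minkowski-difference side is automatic from $\beta_{\tau}(-s)\leq\beta_{\tau}(0)$.
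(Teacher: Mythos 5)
Your proposal is correct and follows essentially the same route as the paper's proof: reduce to the one-sided derivative supplied by the first variation formula, identify the superlevel sets of $e^{-(\phi^{\ast}+th_{L})^{\ast}}$ with $K_{\tau}+tL$ for $t>0$ and $K_{\tau}\ominus sL$ for $t=-s<0$, apply Matheron's Lemma \ref{lem:mathron-difference} fiber-wise in $\tau$, and pass the limit through the layer-cake integral by dominated convergence using the convexity of $\beta_{\tau}$. Your treatment is if anything slightly more explicit than the paper's about why the dominating function $(\beta_{\tau}(t_{0})-\beta_{\tau}(0))/t_{0}$ is integrable in $\tau$.
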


\begin{proof}
Recall that $e^{-\left(\phi^{\ast}+th_{L}\right)^{\ast}}=f\star\left(t\cdot\oo_{L}\right)$,
so the first variation formula (\ref{eq:representation}) implies
that
\[
\left.\frac{\dd}{\dd t}\right|_{t=0^{+}}\int_{\RR^{n}}e^{-\left(\phi^{\ast}+th_{L}\right)^{\ast}}=\int_{\RR^{n}}h_{L}\dd\mu_{f}+\int_{\SS^{n-1}}h_{L}\dd\nu_{f}.
\]
It is therefore enough to prove that the two-sided derivative exists,
and we can save ourselves a bit of work by not re-computing this derivative. 

Write $f_{t}=e^{-\left(\phi^{\ast}+th_{L}\right)^{\ast}}=f\star\oo_{tL}$.
For $t>0$ we have
\[
f_{t}(x)=\sup_{y\in\RR^{n}}\left(f(x-y)\oo_{tL}(y)\right)=\max_{y\in tL}f(x-y).
\]

If on the other hand $t=-s<0$ then we have
\begin{align*}
\left(\phi^{\ast}+th_{L}\right)^{\ast}(x) & =\left(\phi^{\ast}-h_{sL}\right)^{\ast}(x)=\sup_{y\in\RR^{n}}\left(\left\langle x,y\right\rangle -\phi^{\ast}(y)+h_{sL}(y)\right)\\
 & =\sup_{y\in\RR^{n}}\sup_{z\in sL}\left(\left\langle x,y\right\rangle -\phi^{\ast}(y)+\left\langle z,y\right\rangle \right)\\
 & =\sup_{z\in sL}\sup_{y\in\RR^{n}}\left(\left\langle x+z,y\right\rangle -\phi^{\ast}(y)\right)\\
 & =\sup_{z\in sL}\phi^{\ast\ast}\left(x+z\right)=\sup_{z\in sL}\phi(x+z),
\end{align*}
 and so $f_{t}(x)=\inf_{z\in sL}f\left(x+z\right)$. 

Let us rewrite these identities in terms of level sets. For $u>0$
we define 
\[
K_{u}=\left\{ x\in\RR^{n}:\ f(x)\ge u\right\} ,
\]
 a set we also denote by $\left[f\ge u\right]$. Then for $t>0$ we
have 
\begin{align*}
\left[f_{t}\ge u\right] & =\left\{ x\in\RR^{n}:\ \exists y\in tL,\ f(x-y)\ge u\right\} \\
 & =\left\{ x\in\RR^{n}:\ \exists y\in tL,\ x-y\in K_{u}\right\} =K_{u}+tL,
\end{align*}
 while for $t=-s<0$ we have
\begin{align*}
\left[f_{t}\ge u\right] & =\left\{ x\in\RR^{n}:\ \forall y\in sL,\ f(x+y)\ge u\right\} \\
 & =\left\{ x\in\RR^{n}:\ \forall y\in sL,\ x+y\in K_{u}\right\} =K_{u}\ominus sL.
\end{align*}

We therefore consider the function 
\[
\beta_{u}(t)=\begin{cases}
\left|K_{u}+tL\right| & t\ge0\\
\left|K_{u}\ominus sL\right| & t=-s<0.
\end{cases}
\]
Write $M=\max f$, and note that by layer cake decomposition we have
\[
\lim_{t\to0}\frac{\int f_{t}-\int f}{t}=\lim_{t\to0}\frac{\int_{0}^{M}\left|\left[f_{t}\ge u\right]\right|\dd u-\int_{0}^{M}\left|\left[f\ge u\right]\right|\dd u}{t}=\lim_{t\to0}\int_{0}^{M}\frac{\beta_{u}(t)-\beta_{u}(0)}{t}\dd u,
\]
By Lemma \ref{lem:mathron-difference}(\ref{enu:matheron-differentiable})
we know that 
\[
\lim_{t\to0}\frac{\beta_{u}(t)-\beta_{u}(0)}{t}
\]
 exists. By the second part of the lemma we know that for all $\left|t\right|<t_{0}$
we have 
\[
\left|\frac{\beta_{u}(t)-\beta_{u}(0)}{t}\right|\le\frac{\beta_{u}(\left|t\right|)-\beta_{u}(0)}{\left|t\right|}\le\frac{\beta_{u}(t_{0})-\beta_{u}(0)}{t_{0}}<\infty,
\]
 so we may apply the bounded convergence theorem and exchange the
limit and the integral (For the second inequality we used the known
fact that for $t>0$ the function $t\mapsto\frac{\left|K_{u}+tL\right|-\left|K_{u}\right|}{t}$
is increasing, which can be proven by writing $\left|K_{u}+tL\right|$
as its Steiner polynomial -- See e.g. Section 4.1 of \cite{SchneiderConvex2013}).
We conclude that 
\[
\lim_{t\to0}\frac{\int f_{t}-\int f}{t}=\int_{0}^{M}\beta_{u}^{\prime}(0)\dd u,
\]
so in particular the two-sided derivative exists and the proof is
complete. 
\end{proof}

\section{\label{sec:Minkowski}The Minkowski problem}

We now begin our proof of Theorem \ref{thm:main-Minkowksi}. We begin
by showing that the conditions on the measures $\mu$ and $\nu$ are
indeed necessary: 
\begin{prop}
\label{prop:minkowksi-necessary}For every $f\in\lc$ we have:
\end{prop}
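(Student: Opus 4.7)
The plan is to verify each of the three necessary conditions of Theorem \ref{thm:main-Minkowksi} separately, applying the first variation formula (Theorem \ref{thm:representation}) with carefully chosen test functions $g$ together with the coercivity of $\phi$.

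Condition (1) is immediate: since $\mu_{f} = (\nabla\phi)_{\sharp}(f\,\dd x)$ and $\nabla\phi$ exists a.e.\ on $\{f>0\}$, we have $\mu_{f}(\RR^{n}) = \int f > 0$, so $\mu_{f}$ is not identically zero. For the finite first moment part of condition (2), I would apply Theorem \ref{thm:representation} with $g = \oo_{\overline{B}_{1}(0)}$, the indicator of the closed unit ball, for which $h_{g}(y) = |y|$ on $\RR^{n}$ and $h_{\supp g}\equiv 1$ on $\SS^{n-1}$, yielding
\[
\delta(f, \oo_{\overline{B}_{1}(0)}) = \int_{\RR^{n}}|x|\,\dd\mu_{f}(x) + \nu_{f}(\SS^{n-1}).
\]
Coercivity $\phi(x) \ge a|x|+b$ gives $(f\star\oo_{tB})(x) \le e^{-a(|x|-t)-b}$, hence $\int f\star\oo_{tB} \le e^{at-b}\int e^{-a|x|}\,\dd x < \infty$ for small $t \ge 0$, so $\delta(f,\oo_{B}) < \infty$ and both $\int |x|\,\dd\mu_{f}$ and $\nu_{f}(\SS^{n-1})$ are finite. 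For the centering statement I would apply Theorem \ref{thm:representation} with $g = \oo_{\{v\}}$, a log-concave USC function (allowed in the theorem, where $g$ need not have positive integral). Here $h_{g}(y) = \langle v, y\rangle$ and $h_{\supp g}(\theta) = \langle v, \theta\rangle$, while $(t\cdot\oo_{\{v\}}) = \oo_{\{tv\}}$ and $f\star\oo_{\{tv\}} = f(\cdot - tv)$, so $t \mapsto \int f\star(t\cdot g)$ is constant and $\delta(f,\oo_{\{v\}}) = 0$. The representation formula then reads $\bigl\langle v,\, \int x\,\dd\mu_{f} + \int \theta\,\dd\nu_{f}\bigr\rangle = 0$ for every $v \in \RR^{n}$, which is the centering of $\mu_{f}+\nu_{f}$.

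For condition (3), I would argue by contradiction. Suppose $\supp(\mu_{f})\cup\supp(\nu_{f}) \subseteq H$ for a hyperplane $H = \{x : \langle x,\theta_{0}\rangle = c\}$ with $\theta_{0}\in\SS^{n-1}$. If $c \ne 0$, then applying the centering condition to $v = \theta_{0}$ and using that $\langle\theta_{0},\cdot\rangle \equiv c$ on $H$ gives $0 = c\mu_{f}(\RR^{n}) + c\nu_{f}(\SS^{n-1})$, which forces $\mu_{f}(\RR^{n}) = 0$, contradicting (1). If $c = 0$, then $\supp(\mu_{f}) \subseteq \theta_{0}^{\perp}$ means $\partial_{\theta_{0}}\phi = 0$ a.e.\ on $\{f>0\}$, so by convexity $\phi$ is constant along every $\theta_{0}$-line that meets its effective domain. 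Writing coordinates $(x',t)$ adapted to $\theta_{0}$, we get $\phi(x',t) = \tilde\phi(x')$ on the fiber interval $J(x')$, and $\int f = \int_{\theta_{0}^{\perp}} e^{-\tilde\phi(x')}|J(x')|\,\dd x' < \infty$ forces $|J(x')|$ to be a.e.\ finite on the effective domain of $\tilde\phi$. The upper envelope $b(x') = \sup J(x')$ is then a concave function defining an $(n-1)$-dimensional graph inside $\partial\supp(f)$ whose outer unit normal is proportional to $(-\nabla b(x'),1)$, having strictly positive $\theta_{0}$-component. Lower semicontinuity of $\phi$ combined with $\phi \ge \tilde\phi$ upgrades the interior constancy to $\phi(x', b(x')) = \tilde\phi(x')$, so $f > 0$ on this ``top'' boundary, contributing positive $\nu_{f}$-mass to $\SS^{n-1}\setminus\theta_{0}^{\perp}$ and contradicting $\supp(\nu_{f}) \subseteq H$.

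The principal difficulty is the $c=0$ case of condition (3): one must simultaneously use lower semicontinuity to extend the interior constancy of $\phi$ along $\theta_{0}$-lines up to the top boundary (so that $f$ stays positive there), and verify that this top boundary is a genuine $(n-1)$-dimensional piece of $\partial\supp(f)$ on which the Gauss map takes values strictly outside $\theta_{0}^{\perp}$. The remaining steps are essentially routine applications of the first variation formula to the two distinguished test functions $\oo_{\overline{B}_{1}(0)}$ and $\oo_{\{v\}}$.
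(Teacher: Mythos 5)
Your proposal is correct in substance but diverges from the paper's proof in two places, and the comparison is instructive. For condition (2) the paper simply cites \cite[Proposition 1.6]{RotemAnisotropic2023} for finiteness of the first moment and then tests the variation formula against $g=\oo_{\{\theta\}}$ exactly as you do; your self-contained derivation of finiteness via $g=\oo_{\overline{B}_{1}(0)}$ works, except for one small gap: finiteness of $\int f\star\oo_{tB}$ for small $t$ does not by itself bound the limit of the difference quotient $\frac{1}{t}\left(\int f\star\oo_{tB}-\int f\right)$ as $t\to0^{+}$. You also need that this quotient is nondecreasing in $t$ (layer-cake decomposition plus the Steiner polynomial, exactly the monotonicity the paper invokes in the proof of Theorem \ref{thm:alexandrov-support}), after which it is dominated by its value at a fixed $t_{0}>0$. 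The genuine divergence is condition (3): the paper tests against $g=\oo_{[-\theta,\theta]}$, so that $\delta(f,g)=\int\left|\left\langle x,\theta\right\rangle \right|\dd\mu_{f}+\int\left|\left\langle x,\theta\right\rangle \right|\dd\nu_{f}=0$, and then uses $\delta(f,\oo_{L})=n\int_{0}^{\infty}V_{1}(F_{s},L)\dd s$ together with $V_{1}(K,[-\theta,\theta])=\frac{2}{n}\left|\proj_{\theta^{\perp}}K\right|$ to force $\left|F_{s}\right|=0$ for all $s$ and hence $\int f=0$. Your route is instead a direct geometric analysis: $\mu_{f}$ supported in $\theta_{0}^{\perp}$ forces $\phi$ to be constant along $\theta_{0}$-lines, finiteness of $\int f$ forces the fibers to be bounded, and lower semicontinuity combined with one-dimensional convexity forces $\phi(x',b(x'))=\tilde{\phi}(x')$ on the concave upper envelope, producing positive $\nu_{f}$-mass off $\theta_{0}^{\perp}$. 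This is correct — and your $c\neq0$ reduction via the centering condition handles affine hyperplanes, which the paper's proof silently restricts to linear ones — but it is longer and genuinely requires the details you flag: that the fibers are simultaneously bounded or unbounded (via the recession cone of $\dom(\phi)$), and that the upper graph is a rectifiable set of positive $\HH^{n-1}$-measure on which $f>0$ and the Gauss map has strictly positive $\theta_{0}$-component. The paper's mixed-volume argument buys a one-line contradiction at the price of the external projection formula; yours is more elementary and more informative about the structure of the degenerate case.
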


\begin{enumerate}
\item \label{enu:neccessary-non-trivial}$\mu_{f}$ is not identically $0$.
\item \label{enu:neccessary-centered}$\mu_{f}$ has finite first moment
and for all $\theta\in\SS^{n-1}$ we have 
\[
\int_{\RR^{n}}\left\langle x,\theta\right\rangle \dd\mu_{f}(x)+\int_{\SS^{n-1}}\left\langle x,\theta\right\rangle \dd\nu_{f}(x)=0.
\]
\item \label{enu:neccessary-support}$\mu_{f}$ and $\nu_{f}$ are not supported
on a common hyperplane $H\subseteq\RR^{n}$. 
\end{enumerate}
\begin{proof}
Property (\ref{enu:neccessary-non-trivial}) is obvious since $\mu_{f}(\RR^{n})=\int f>0$.
The fact that $\mu_{f}$ has a finite first moment (and that $\mu_{f}$
and $\nu_{f}$ are finite measures) was shown in \cite[Proposition 1.6]{RotemAnisotropic2023}. 

We will show the remaining part of property (\ref{enu:neccessary-centered}),
as well as property (\ref{enu:neccessary-support}), by choosing an
appropriate function $g$ in the variation formula (\ref{eq:representation}).
It will be convenient to use \cite[Proposition 2.4]{RotemAnisotropic2023}:
If $g=\oo_{L}$ for a compact convex set $L\subseteq\RR^{n}$ then
\[
\delta(f,g)=n\int_{0}^{\infty}V_{1}(F_{s},L)\dd s,
\]
 where $F_{s}=\left\{ x\in\RR^{n}:\ f(x)\ge s\right\} $ and $V_{1}$
denotes the first mixed volume, i.e. 
\[
n\cdot V_{1}(K,L)=\lim_{t\to0^{+}}\frac{\left|K+tL\right|-\left|K\right|}{t}.
\]

Choosing $L=\left\{ \theta\right\} $ and $g=\oo_{L}$ we clearly
have $h_{g}(x)=\left\langle x,\theta\right\rangle $ and $V_{1}(K,L)=0$
for all $K$. Therefore 
\[
0=\delta(f,g)=\int_{\RR^{n}}\left\langle x,\theta\right\rangle \dd\mu_{f}(x)+\int_{\SS^{n-1}}\left\langle x,\theta\right\rangle \dd\nu_{f}(x),
\]
 which proves (\ref{enu:neccessary-centered}). 

To show (\ref{enu:neccessary-support}), assume by contradiction that
$\mu_{f}$ and $\nu_{f}$ are supported on $H=\left\{ x:\ \left\langle x,\theta\right\rangle =0\right\} $.
Choose $L=[-\theta,\theta]$ and $g=h_{L}$. Since $h_{g}(x)=h_{L}(x)=\left|\left\langle x,\theta\right\rangle \right|$
it follows from our assumption and the variation formula (\ref{eq:representation})
that 
\[
n\int_{0}^{\infty}V_{1}(F_{s},[-\theta,\theta])\dd s=\delta(f,g)=\int_{\RR^{n}}\left|\left\langle x,\theta\right\rangle \right|\dd\mu_{f}(x)+\int_{\SS^{n-1}}\left|\left\langle x,\theta\right\rangle \right|\dd\nu_{f}(x)=0.
\]
 Therefore we must have $V_{1}(F_{s},[-\theta,\theta])=0$ for all
$s>0$. However it is known that $V_{1}(K,[-\theta,\theta])=\frac{2}{n}\left|\proj_{\theta^{\perp}}K\right|$,
where $\proj$ denotes the orthogonal projection -- this is for example
a special case of \cite[Theorem 5.3.1]{SchneiderConvex2013}. Therefore
$\left|\proj_{\theta^{\perp}}F_{s}\right|=0$ for all $s>0$, which
implies that $F_{s}$ has an empty interior so $\left|F_{s}\right|=0$
as well. But then $\int f=\int_{0}^{\infty}\left|F_{s}\right|\dd s=0$,
which is the required contradiction. 
\end{proof}
We now begin our proof that these conditions are also sufficient for
the existence of a solution to the Minkowski problem. Towards this
goal we fix $\mu$ and $\nu$ that satisfy the conditions of Theorem
\ref{thm:main-Minkowksi} and define a functional $F:\cvx\to(-\infty,\infty]$
by 
\begin{equation}
F(\phi)=\int_{\RR^{n}}\phi^{\ast}\dd\mu+\int_{\SS^{n-1}}\overline{\phi^{\ast}}\dd\nu-\mu(\RR^{n})\log\int_{\RR^{n}}e^{-\phi}.\label{eq:main-functional}
\end{equation}
 We will show that $F$ attains a minimum at some function $\phi_{0}\in\cvx$
and that our sought after solution is $f=ce^{-\phi_{0}}$ for some
$c>0$. In order to prove that $F$ attains a minimum it will be useful
to equip the space $\cvx$ with a topology:
\begin{defn}
\label{def:epi-convergence}Fix $\left\{ \phi_{k}\right\} _{k=1}^{\infty},\phi\in\cvx$.
We say that $\phi_{k}$ epi-converges to $\phi$ as $k\to\infty$
if:
\begin{enumerate}
\item For all $x\in\RR^{n}$ and all sequences $x_{k}\to x$ we have $\liminf_{k\to\infty}\phi_{k}(x_{k})\ge\phi(x)$. 
\item For all $x\in\RR^{n}$ there exists a sequence $x_{k}\to x$ such
that $\limsup_{k\to\infty}\phi_{k}(x_{k})\le\phi(x)$. 
\end{enumerate}
In this case we simply write $\phi_{k}\xrightarrow{k\to\infty}\phi$
or even $\phi_{k}\to\phi$. 
\end{defn}

The notion of epi-convergence is well-known in fields like convex
analysis and optimization, and goes back to the work of Wijsman (\cite{WijsmanConvergence1964,WijsmanConvergence1966})
-- see e.g. \cite{RockafellarWetsVariational1998} for many results
on epi-convergence and historical remarks. In particular, to explain
the name epi-convergence, recall that the epigraph of $\phi\in\cvx$
is given by 
\begin{equation}
\epi(\phi)=\left\{ (x,t)\in\RR^{n}\times\RR:\ \phi(x)\le t\right\} \subseteq\RR^{n+1}.\label{eq:epi-def}
\end{equation}
 Then $\phi_{k}\to\phi$ if and only if $\epi(\phi_{k})\to\epi(\phi)$
in the sense of Painlevé--Kuratowski (\cite[Proposition 7.2]{RockafellarWetsVariational1998}).
For functions in $\cvx$ this just means that $\epi(\phi_{k})\cap\overline{B}_{R}(0)\to\epi(\phi)\cap\overline{B}_{R}(0)$
in the Hausdorff metric for all large enough $R>0$ (\cite[Exercise 4.16]{RockafellarWetsVariational1998}). 

In the field of convex geometry the notion of epi-convergence was
used by Colesanti, Ludwig and Mussnig in \cite{ColesantiEtAlValuations2019}
and subsequent papers (\cite{ColesantiEtAlHadwiger2024}, \cite{ColesantiEtAlHadwiger2023},
\cite{ColesantiEtAlHadwiger2022}, \cite{ColesantiEtAlHadwiger2023a}).
These works present compelling evidence that epi-convergence should
be used as the functional extension of the Hausdorff topology. Our
work can be viewed as further evidence in this direction. Indeed,
as mentioned above in the case $\nu\equiv0$ our functional $F$ coincides
with the one used by Cordero-Erausquin and Klartag (\cite{Cordero-ErausquinKlartagMoment2015}).
However, they only worked with pointwise convergence, and we believe
that our use epi-convergence our proof leads to significant simplifications
of the proof even in this special case. 

One advantage of using epi-convergence is that there are known compactness
theorems for this topology, serving as functional analogues of the
Blaschke selection theorem. We will use the following formulation,
which is due to Li and Mussnig (\cite{LiMussnigMetrics2022}):
\begin{thm}[{\cite[Theorem 2.15]{LiMussnigMetrics2022}}]
\label{thm:selection}Fix a sequence $\left\{ \phi_{k}\right\} _{k=1}^{\infty}\subseteq\cvx$
such that:
\begin{enumerate}
\item $\sup_{k}\left(\min\phi_{k}\right)<\infty$. 
\item The sequence is uniformly coercive: There exists $a>0$ and $b\in\RR^{n}$
such that $\phi_{k}(x)\ge a\left|x\right|+b$ for all $k$ and all
$x\in\RR^{n}$. 
\end{enumerate}
Then there exists a sub-sequence $\left\{ \phi_{k_{\ell}}\right\} _{\ell=1}^{\infty}$
such that $\phi_{k_{\ell}}\to\phi$ for a coercive, lower semicontinuous
convex function $\phi:\RR^{n}\to(-\infty,\infty]$. 
\end{thm}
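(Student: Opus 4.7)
The plan is to work at the level of epigraphs. Recall that $\phi_k \to \phi$ in the epi-convergence sense is equivalent to the Painlev\'e--Kuratowski convergence $\epi(\phi_k) \to \epi(\phi)$ in $\RR^{n+1}$, which in turn amounts to $\epi(\phi_k) \cap \overline{B}_R(0) \to \epi(\phi) \cap \overline{B}_R(0)$ in Hausdorff distance for every sufficiently large $R$. I would therefore extract a subsequence along which the truncated epigraphs converge in Hausdorff distance, and then recover the limit function from the limit set.

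First I would reinterpret the hypotheses geometrically in $\RR^{n+1}$. Uniform coercivity says every epigraph lies in the fixed closed convex region $C = \{(x,t) : t \ge a|x|+b\}$, while $M := \sup_k(\min \phi_k) < \infty$ combined with coercivity forces any point $x$ with $\phi_k(x) \le M$ to satisfy $a|x|+b \le M$, i.e.\ into a fixed bounded subset of $\RR^n$. Consequently each $\epi(\phi_k)$ meets a common compact ball $\overline{B}_{R_0}(0) \subseteq \RR^{n+1}$. For each integer $R \ge R_0$ the truncations $\epi(\phi_k) \cap \overline{B}_R(0)$ form a sequence of nonempty compact convex sets, to which I would apply the classical Blaschke selection theorem. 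A standard diagonal extraction then produces a single subsequence $k_\ell$ along which the truncations converge in Hausdorff distance to some $E_R$ for every integer $R \ge R_0$. Consistency, $E_R \cap \overline{B}_{R'}(0) = E_{R'}$ for $R' \le R$, makes $E := \bigcup_{R \ge R_0} E_R$ a well-defined closed convex subset of $\RR^{n+1}$.

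To conclude, I would define $\phi(x) := \inf\{t \in \RR : (x,t) \in E\}$ with the convention that the infimum over the empty set is $+\infty$, and verify: (i) $E$ is upward closed in the $t$-direction, a property that passes from each $\epi(\phi_{k_\ell})$ to the Painlev\'e--Kuratowski limit by approximating $(x,s) \in E$ with $s > t$ using sequences built from an approximating sequence for $(x,t) \in E$; (ii) $\phi$ is convex and lower semicontinuous, both being immediate once $E$ is closed, convex, and upward closed; (iii) $\phi(x) \ge a|x|+b$ everywhere because $E \subseteq C$, so in particular $\phi$ is coercive; and (iv) $\dom(\phi) \ne \emptyset$, because $E$ meets $\overline{B}_{R_0}(0)$ as the Hausdorff limit of a sequence of nonempty compact sets. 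Together these imply $E = \epi(\phi)$, and combined with the Hausdorff convergence of all large-ball truncations this gives $\phi_{k_\ell} \to \phi$ in the epi-convergence sense.

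The main obstacle is verifying (i) and (iv): these are the two points where the limiting procedure could fail to produce a proper epigraph (either $E$ could be empty, making $\phi \equiv +\infty$, or $E$ could fail to be upward closed, making the definition of $\phi$ disagree with $E$). These are precisely the pathologies that the two hypotheses of the theorem are designed to prevent, which is why rephrasing everything in terms of epigraphs and closed convex sets in $\RR^{n+1}$ is the natural framework for the argument.
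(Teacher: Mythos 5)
The paper does not prove this statement at all: it is quoted verbatim from Li and Mussnig \cite[Theorem 2.15]{LiMussnigMetrics2022}, so there is no internal proof to compare against. Judged on its own, your argument is a sound and natural self-contained proof: translating both hypotheses into statements about the epigraphs (uniform containment in $C=\{(x,t):\,t\ge a|x|+b\}$, plus a common compact ball that every $\epi(\phi_k)$ meets, using that each coercive lsc $\phi_k$ attains its minimum at a point $x_k$ with $a|x_k|+b\le M$ and $b\le\min\phi_k\le M$), then running Blaschke selection with a diagonal extraction over integer radii, is exactly the right strategy, and your checklist (i)--(iv) correctly isolates where the two hypotheses are used.

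The one step you assert without justification, and which is not automatic, is the consistency claim $E_R\cap\overline{B}_{R'}(0)=E_{R'}$: Hausdorff convergence does not in general commute with intersection by a fixed ball (the inclusion $E_{R'}\subseteq E_R\cap\overline{B}_{R'}(0)$ is easy, but the reverse one can fail when the limit set meets the sphere of radius $R'$ tangentially), and your final passage from ``Hausdorff convergence of all truncations'' back to epi-convergence leans on it. The cleanest repair is to bypass consistency entirely and verify Painlev\'e--Kuratowski convergence of the full epigraphs to $E=\bigcup_R E_R$ directly from the inner/outer limit characterization: any limit of points $z_{k_\ell}\in\epi(\phi_{k_\ell})$ eventually lies in a fixed $\overline{B}_R(0)$ and hence in $E_R\subseteq E$, while any $z\in E_R$ is approximated by points of $\epi(\phi_{k_\ell})\cap\overline{B}_R(0)$ by the Hausdorff convergence at radius $R$. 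This shows $E$ equals both the inner and outer limit (so in particular $E$ is closed), and epi-convergence then follows from \cite[Proposition 7.2]{RockafellarWetsVariational1998}. With that adjustment your steps (i)--(iv) go through as written and the proof is complete.
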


As a technical point, it may happen that $\int e^{-\phi}=0$, so $\phi$
does not have to belong to $\cvx$. In our applications of Theorem
\ref{thm:selection} it will be easy to rule this option out. 

We can now state the main technical lemma required to prove that $F$
admits a minimizer:
\begin{lem}
\label{lem:functional-bound}Assume $\mu$ and $\nu$ satisfy the
assumptions of Theorem \ref{thm:main-Minkowksi}. Then:
\begin{enumerate}
\item \label{enu:norm-comparison-one}There exists a constant $c>0$ depending
on $\mu$ and $\nu$ such that 
\begin{equation}
\int_{\RR^{n}}\left|\left\langle x,y\right\rangle \right|\dd\mu(x)+\int_{\SS^{n-1}}\left|\left\langle x,y\right\rangle \right|\dd\nu(x)\ge c\left|y\right|\label{eq:norm-compare}
\end{equation}
 for all $y\in\RR^{n}$. 
\item \label{enu:functional-bound}Assume $c>0$ satisfies (\ref{eq:norm-compare}).
Then for every $\phi\in\cvx$ with $\min\phi=\phi(0)$ and for all
$x\in\RR^{n}$ we have 
\begin{equation}
\phi(x)\ge\frac{1}{\mu(\RR^{n})}\left(\frac{c}{2}\left|x\right|-\int_{\RR^{n}}\phi^{\ast}\dd\mu-\int_{\SS^{n-1}}\overline{\phi^{\ast}}\dd\nu\right).\label{eq:unif-coercive}
\end{equation}
 
\end{enumerate}
\end{lem}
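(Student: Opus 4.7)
For part \eqref{enu:norm-comparison-one}, the plan is to consider
\[F(y) := \int_{\RR^n} |\langle x, y \rangle|\,\dd\mu(x) + \int_{\SS^{n-1}} |\langle x, y \rangle|\,\dd\nu(x).\]
This $F$ is $1$-homogeneous and continuous (by dominated convergence, using that $\mu$ has finite first moment by hypothesis and that $\nu$ is a finite measure on the unit sphere). Condition~(3) of Theorem~\ref{thm:main-Minkowksi} forces $F(y) > 0$ for every $y \neq 0$: otherwise $\langle \cdot, y\rangle$ would vanish both $\mu$- and $\nu$-a.e., placing both measures on the common hyperplane $\{x : \langle x,y\rangle = 0\}$. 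Compactness then gives $c := \min_{\SS^{n-1}} F > 0$, and homogeneity extends the bound \eqref{eq:norm-compare} to all of $\RR^n$.

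For part \eqref{enu:functional-bound}, I first note that \eqref{eq:unif-coercive} is invariant under vertical translations of $\phi$: replacing $\phi$ by $\phi + m$ sends $\phi^*$ to $\phi^* - m$ and leaves $\overline{\phi^*}$ unchanged, so both sides shift by exactly $m$. I may therefore normalize $\phi(0) = 0$, which makes $\phi$, $\phi^*$, and $\overline{\phi^*}$ all non-negative (for $\phi^*$, via Young's inequality at $x = 0$; for $\overline{\phi^*}$, by taking the limit in its definition). The heart of the argument is to combine Young's inequality with this non-negativity: using the elementary bound $(a - b)_+ \geq (a)_+ - b$ valid for $b \geq 0$,
\[\phi^*(y) \geq \max\!\bigl(\langle x, y \rangle - \phi(x),\, 0\bigr) \geq (\langle x, y \rangle)_+ - \phi(x),\]
and analogously $\overline{\phi^*}(\theta) \geq (\langle x, \theta\rangle)_+$, obtained by sending $\lambda \to \infty$ in $\phi^*(\lambda \theta) \geq \lambda \langle x, \theta \rangle - \phi(x)$ and using $\overline{\phi^*} \geq 0$.

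Integrating against $\mu$ and $\nu$ and summing yields
\[\phi(x)\,\mu(\RR^n) + \int_{\RR^n} \phi^*\,\dd\mu + \int_{\SS^{n-1}} \overline{\phi^*}\,\dd\nu \;\geq\; \int_{\RR^n} (\langle x, y \rangle)_+\,\dd\mu(y) + \int_{\SS^{n-1}} (\langle x, \theta \rangle)_+\,\dd\nu(\theta).\]
Writing $(a)_+ = \tfrac{1}{2}(|a| + a)$ and invoking the centering hypothesis to annihilate the linear parts, the right-hand side collapses to $\tfrac{1}{2}F(x) \geq \tfrac{c|x|}{2}$ by part~\eqref{enu:norm-comparison-one}, and rearranging gives \eqref{eq:unif-coercive}. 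The degenerate cases ($\phi(x) = +\infty$, or one of the integrals on the left being $+\infty$) are immediate. The main conceptual obstacle is recognizing that a naive application of Young's inequality alone produces only a constant lower bound, since the linear terms $\int \langle x, y \rangle\,\dd\mu$ and $\int \langle x, \theta\rangle\,\dd\nu$ exactly cancel by centering; the remedy of replacing $\langle x, y \rangle$ by $(\langle x, y \rangle)_+$---available precisely because $\phi^* \geq 0$ after vertical normalization---is what breaks the cancellation and ultimately produces the $\tfrac{c}{2}|x|$ term, with the factor $\tfrac{1}{2}$ an honest artifact of the identity $(a)_+ = \tfrac{1}{2}(|a| + a)$.
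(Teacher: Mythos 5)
Your proof is correct and follows essentially the same route as the paper: part (1) via positivity and homogeneity of the norm $p(y)$, and part (2) via the pointwise bound $\phi^*(y)\ge[\langle x,y\rangle-\phi(x)]_+\ge\langle x,y\rangle_+-\phi(x)$, the identity $a_+=\tfrac{1}{2}(a+|a|)$, and the centering cancellation. The only cosmetic difference is that the paper packages this pointwise bound by introducing an auxiliary function $\rho$ supported on the segment $[0,x_0]$ and computing $\rho^*$ explicitly, whereas you obtain the same inequality directly from Young's inequality together with the normalization $\phi^*\ge0$.
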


\begin{proof}
For the first part, it is enough to observe that the function 
\[
p(y)=\int_{\RR^{n}}\left|\left\langle x,y\right\rangle \right|\dd\mu(x)+\int_{\SS^{n-1}}\left|\left\langle x,y\right\rangle \right|\dd\nu(x)
\]
 is a norm on $\RR^{n}$. Indeed, it is clearly a semi-norm as the
sum of semi-norms, and if $p(y)=0$ for $y\ne0$ then $\mu$ and $\nu$
are supported on $y^{\perp}$ which contradicts our assumption. Since
all norms on $\RR^{n}$ are equivalent it follows that $p(y)\ge c\left|y\right|$
for some $c>0$. 

For the second assertion, set 
\[
G(\phi)=\int_{\RR^{n}}\phi^{\ast}\dd\mu+\int_{\SS^{n-1}}\overline{\phi^{\ast}}\dd\nu.
\]
For all $\lambda\in\RR$ we have $\left(\phi+\lambda\right)^{\ast}=\phi^{\ast}-\lambda$,
and therefore $G(\phi+\lambda)=G(\phi)-\lambda\mu(\RR^{n})$. It follows
that the validity of (\ref{eq:unif-coercive}) doesn't change when
$\phi$ is replaced by $\phi+\lambda$, so we may assume without loss
of generality that $\min\phi=\phi(0)=0$. 

Fix $x_{0}\in\RR^{n}$ with $\phi(x_{0})<\infty$, and consider the
function 
\[
\rho(x)=\begin{cases}
t\cdot\phi(x_{0}) & x=tx_{0},\ 0\le t\le1\\
\infty & \text{otherwise.}
\end{cases}
\]
 Since $\phi$ is convex and $\phi(0)=0$ we clearly have $\phi\le\rho$,
and therefore $\phi^{\ast}\ge\rho^{\ast}$. A direct computation gives
\begin{align*}
\rho^{\ast}(x) & =\sup_{0\le t\le1}\left(\left\langle x,tx_{0}\right\rangle -\rho(tx_{0})\right)\\
 & =\sup_{0\le t\le1}\left[t\left(\left\langle x,x_{0}\right\rangle -\phi(x_{0})\right)\right]=\left[\left\langle x,x_{0}\right\rangle -\phi(x_{0})\right]_{+},
\end{align*}
 where $a_{+}=\max(a,0)$. Therefore $\overline{\rho^{\ast}}(x)=\left\langle x,x_{0}\right\rangle _{+}$.
We can now bound $G(\phi)$ from below as 
\begin{align*}
G(\phi) & \ge G(\rho)=\int_{\RR^{n}}\left[\left\langle x,x_{0}\right\rangle -\phi(x_{0})\right]_{+}\dd\mu(x)+\int_{\SS^{n-1}}\left\langle x,x_{0}\right\rangle _{+}\dd\nu(x)\\
 & \ge\int_{\RR^{n}}\left\langle x,x_{0}\right\rangle _{+}\dd\mu(x)+\int_{\SS^{n-1}}\left\langle x,x_{0}\right\rangle _{+}\dd\nu(x)-\phi(x_{0})\cdot\mu(\RR^{n}),
\end{align*}
 where in the last inequality we used the fact that $(a-b)_{+}\ge a_{+}-b$
whenever $b\ge0$. Using the fact that $a_{+}=\frac{a+\left|a\right|}{2}$
and that $\mu+\nu$ is centered we obtain 
\begin{align*}
G(\phi) & \ge\int_{\RR^{n}}\frac{\left\langle x,x_{0}\right\rangle +\left|\left\langle x,x_{0}\right\rangle \right|}{2}\dd\mu(x)+\int_{\SS^{n-1}}\frac{\left\langle x,x_{0}\right\rangle +\left|\left\langle x,x_{0}\right\rangle \right|}{2}\dd\nu(x)-\phi(x_{0})\cdot\mu(\RR^{n})\\
 & =\frac{1}{2}\int_{\RR^{n}}\left|\left\langle x,x_{0}\right\rangle \right|\dd\mu(x)+\frac{1}{2}\int_{\SS^{n-1}}\left|\left\langle x,x_{0}\right\rangle \right|\dd\nu(x)-\phi(x_{0})\mu(\RR^{n})\ge\frac{c}{2}\left|x_{0}\right|-\phi(x_{0})\mu(\RR^{n}),
\end{align*}
 and rearranging we obtain $\phi(x_{0})\ge\frac{1}{\mu(\RR^{n})}\left(\frac{c}{2}\left|x_{0}\right|-G(\phi)\right)$
as claimed. 
\end{proof}
We will also need the following simple invariance property:
\begin{lem}
\label{lem:functional-invariance}Assume $\mu+\nu$ is centered, and
let $F$ be the functional defined by (\ref{eq:main-functional}).
Fix $\phi\in\cvx$, $v\in\RR^{n}$ and $b\in\RR$ and define $\psi(x)=\phi(x+v)+b$.
Then $F\left(\psi\right)=F\left(\phi\right)$. 
\end{lem}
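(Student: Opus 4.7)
The plan is a direct computation: I will track how each of the three terms defining $F$ changes under the substitution $\phi \mapsto \psi = \phi(\cdot + v) + b$, and then observe that the total change vanishes thanks to the centering assumption on $\mu + \nu$.

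First I would compute the Legendre transform of $\psi$. A change of variable $z = x + v$ gives
\[
\psi^{\ast}(y) = \sup_{x\in\RR^{n}}\bigl(\langle x,y\rangle - \phi(x+v) - b\bigr) = \phi^{\ast}(y) - \langle v,y\rangle - b.
\]
From this the horizon function follows immediately: for any $p$ with $\phi^{\ast}(p) < \infty$ and any $\theta \in \SS^{n-1}$,
\[
\overline{\psi^{\ast}}(\theta) = \lim_{\lambda\to\infty}\frac{\phi^{\ast}(p+\lambda\theta) - \langle v,p+\lambda\theta\rangle - b}{\lambda} = \overline{\phi^{\ast}}(\theta) - \langle v,\theta\rangle,
\]
since the constant terms $-\langle v,p\rangle$ and $-b$ contribute $O(1/\lambda)$.

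Second, I would plug these identities into the three terms of $F(\psi)$. The first term becomes $\int \phi^{\ast}\dd\mu - \int\langle v,x\rangle\dd\mu(x) - b\,\mu(\RR^{n})$; the second becomes $\int \overline{\phi^{\ast}}\dd\nu - \int\langle v,\theta\rangle\dd\nu(\theta)$; and since $\int e^{-\psi} = e^{-b}\int e^{-\phi}$ by translation invariance of Lebesgue measure, the last term becomes $-\mu(\RR^{n})\log\int e^{-\phi} + b\,\mu(\RR^{n})$. Summing, the two contributions $\pm b\,\mu(\RR^{n})$ cancel, and we are left with
\[
F(\psi) = F(\phi) - \Bigl(\int_{\RR^{n}}\langle v,x\rangle\dd\mu(x) + \int_{\SS^{n-1}}\langle v,\theta\rangle\dd\nu(\theta)\Bigr).
\]

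Finally, the bracket vanishes because $\mu + \nu$ is centered: applying the centering identity coordinatewise (or directly with $\theta$ replaced by the arbitrary direction $v$, by linearity) gives precisely $\int\langle v,x\rangle\dd\mu + \int\langle v,\theta\rangle\dd\nu = 0$. Hence $F(\psi) = F(\phi)$. There is no substantive obstacle here; the only points to be a bit careful about are the Legendre transform identity, the passage to the horizon function, and the cancellation of the $b$-terms, all of which are routine.
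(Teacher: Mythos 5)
Your proposal is correct and follows essentially the same route as the paper: compute $\psi^{\ast}=\phi^{\ast}-\left\langle \cdot,v\right\rangle -b$ and $\overline{\psi^{\ast}}=\overline{\phi^{\ast}}-\left\langle \cdot,v\right\rangle$, note $\int e^{-\psi}=e^{-b}\int e^{-\phi}$, cancel the $b$-terms, and kill the $v$-terms with the centering of $\mu+\nu$. No issues.
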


\begin{proof}
We have 
\begin{align*}
\psi^{\ast}(x) & =\sup_{y\in\RR^{n}}\left[\left\langle x,y\right\rangle -\phi(y+v)-b\right]=\sup_{z\in\RR^{n}}\left[\left\langle x,z-v\right\rangle -\phi(z)-b\right]\\
 & =\phi^{\ast}\left(x\right)-\left\langle x,v\right\rangle -b.
\end{align*}
 Therefore $\overline{\psi^{\ast}}(x)=\overline{\phi^{\ast}}(x)-\left\langle x,v\right\rangle $,
and we may compute 
\begin{align*}
F(\psi) & =\int_{\RR^{n}}\left(\phi^{\ast}-\left\langle x,v\right\rangle -b\right)\dd\mu+\int_{\SS^{n-1}}\left(\overline{\phi^{\ast}}-\left\langle x,v\right\rangle \right)\dd\nu-\mu(\RR^{n})\log\int_{\RR^{n}}e^{-\phi-b}\\
 & \stackrel{\left(\ast\right)}{=}\int_{\RR^{n}}\phi^{\ast}\dd\mu-b\mu(\RR^{n})+\int_{\SS^{n-1}}\overline{\phi^{\ast}}\dd\nu-\mu(\RR^{n})\left(\log\left(\int_{\RR^{n}}e^{-\phi}\right)-b\right)=F(\phi)
\end{align*}
 as claimed. Note that in the equality $\left(\ast\right)$ we used
the fact that $\mu+\nu$ is centered. 
\end{proof}
We are ready to prove:
\begin{prop}
\label{prop:minimizer-exists}Assume $\mu$ and $\nu$ satisfy the
assumptions of Theorem \ref{thm:main-Minkowksi}. Then the functional
$F:\cvx\to(-\infty,\infty]$ defined by (\ref{eq:main-functional})
attains a minimum on $\cvx$. 
\end{prop}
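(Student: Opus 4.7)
The plan is the direct method in the calculus of variations: find a minimizing sequence, normalize it so it is precompact in the epi-topology, and use lower semicontinuity of $F$ to conclude that the limit attains the infimum. Let $\{\phi_k\} \subseteq \cvx$ satisfy $F(\phi_k) \to \inf F$. By Lemma \ref{lem:functional-invariance} we may translate each $\phi_k$ and add a constant so that $\min \phi_k = \phi_k(0) = 0$, without changing $F(\phi_k)$. Under this normalization $\phi_k^* \geq 0$ and $\overline{\phi_k^*} \geq 0$, so
\[
G(\phi_k) := \int_{\RR^n} \phi_k^* \dd\mu + \int_{\SS^{n-1}} \overline{\phi_k^*} \dd\nu \geq 0.
\]
Testing $F$ on $\phi_K := \oo_K^\infty$ for some convex body $K$ gives $F(\phi_K) = \int h_K \dd(\mu+\nu) - \mu(\RR^n) \log|K|$, which is finite because $\mu$ has finite first moment and $\nu$ is finite. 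Hence $\inf F < \infty$ and we may assume $F(\phi_k) \leq M$.

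The heart of the argument is to bound $G(\phi_k)$ uniformly. Lemma \ref{lem:functional-bound}(\ref{enu:functional-bound}) gives the coercivity estimate $\phi_k(x) \geq \bigl(c|x|/2 - G(\phi_k)\bigr)/\mu(\RR^n)$; combined with the trivial bound $e^{-\phi_k} \leq 1$ on the ball of radius $2G(\phi_k)/c$ and exponential decay outside, a direct computation yields a polynomial-in-$G$ estimate
\[
\int_{\RR^n} e^{-\phi_k} \leq C_1 \bigl(1 + G(\phi_k)\bigr)^n
\]
for a constant $C_1 = C_1\bigl(n, c, \mu(\RR^n)\bigr)$. Rewriting $F(\phi_k) = G(\phi_k) - \mu(\RR^n) \log \int e^{-\phi_k}$ gives
\[
G(\phi_k) \leq M + n\mu(\RR^n) \log\bigl(1 + G(\phi_k)\bigr) + C_2,
\]
and since the right-hand side grows only logarithmically, this forces $G(\phi_k) \leq C_3$ uniformly in $k$. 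Feeding this back into Lemma \ref{lem:functional-bound}(\ref{enu:functional-bound}) gives uniform coercivity $\phi_k(x) \geq a|x| + b$ with $a > 0, b \in \RR$ independent of $k$.

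With uniform coercivity and $\min \phi_k = 0$, Theorem \ref{thm:selection} produces a subsequence $\phi_{k_\ell}$ epi-converging to a coercive lsc convex $\phi_0$. The first axiom of epi-convergence applied to the constant sequence $x_\ell \equiv 0$ gives $\phi_0(0) \leq \liminf \phi_{k_\ell}(0) = 0$, while the second axiom together with $\phi_{k_\ell} \geq 0$ gives $\phi_0(0) \geq 0$; so $\phi_0(0) = 0$, and together with coercivity this places $\phi_0 \in \cvx$. Finally, I would establish lower semicontinuity of $F$ along the subsequence: epi-convergence $\phi_{k_\ell} \to \phi_0$ implies epi-convergence of the Legendre transforms $\phi_{k_\ell}^* \to \phi_0^*$ (\cite[Theorem 11.34]{RockafellarWetsVariational1998}), which, combined with $\phi_{k_\ell}^* \geq 0$ and continuity of convex functions on the interiors of their effective domains, yields $\int \phi_0^* \dd\mu \leq \liminf \int \phi_{k_\ell}^* \dd\mu$ via Fatou, and an analogous bound for the horizon-function term against $\nu$. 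Uniform coercivity provides an integrable dominating envelope, so dominated convergence gives $\int e^{-\phi_{k_\ell}} \to \int e^{-\phi_0}$. Combining, $F(\phi_0) \leq \liminf F(\phi_{k_\ell}) = \inf F$, so $\phi_0$ attains the infimum.

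The main obstacle is the self-referential estimate in the second paragraph: the polynomial-in-$G$ bound on $\int e^{-\phi_k}$ must be extracted using only the normalization and the coercivity from Lemma \ref{lem:functional-bound}(\ref{enu:functional-bound}), and then dissolved into an absolute bound on $G(\phi_k)$ by exploiting the gap between linear and logarithmic growth. A secondary subtlety is the semicontinuity of the horizon-function term $\int \overline{\phi^*} \dd\nu$, since horizon functions depend on the behavior of $\phi^*$ at infinity and need not converge pointwise under epi-convergence; some additional argument (perhaps using the cosmic compactification developed in Section \ref{sec:cosmic}) is likely required here.
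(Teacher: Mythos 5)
Your overall strategy is the same as the paper's: a minimizing sequence, normalization via Lemma \ref{lem:functional-invariance}, coercivity via Lemma \ref{lem:functional-bound}, compactness via Theorem \ref{thm:selection}, and Fatou. One structural difference is the normalization: the paper fixes $\int e^{-\phi_k}=1$ rather than $\min\phi_k=0$, so the logarithmic term vanishes, $G(\phi_k)=F(\phi_k)$ is bounded because the sequence is minimizing, and Lemma \ref{lem:functional-bound}(\ref{enu:functional-bound}) gives uniform coercivity in one line. Your self-referential estimate $G(\phi_k)\le M+n\mu(\RR^n)\log\left(1+G(\phi_k)\right)+C_2$ is correct (the polynomial bound on $\int e^{-\phi_k}$ does follow from $e^{-\phi_k}\le1$ and the coercivity estimate), but it is extra work that the other normalization avoids.

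There are, however, two gaps. First, the inference that $\phi_0(0)=0$ together with coercivity places $\phi_0\in\cvx$ is false as stated: the function equal to $0$ at the origin and $+\infty$ elsewhere is lower semicontinuous, convex, coercive, and vanishes at $0$, yet has $\int e^{-\phi_0}=0$. You must separately rule out $\int e^{-\phi_{k_\ell}}\to0$. Under your normalization this is easy (since $G(\phi_{k_\ell})\ge0$, the blow-up of $-\mu(\RR^n)\log\int e^{-\phi_{k_\ell}}$ would force $F(\phi_{k_\ell})\to+\infty$), and the paper gets it for free from $\int e^{-\phi_k}=1$ together with the continuity of the integral under epi-convergence (\cite[Lemma 15]{ColesantiEtAlValuations2019}). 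Second, the lower semicontinuity of the term $\int_{\SS^{n-1}}\overline{\phi^{\ast}}\,\dd\nu$, which you explicitly leave open, is genuinely needed and not automatic, since horizon functions depend only on behavior at infinity. The paper closes it by citing \cite[Theorem 7.53]{RockafellarWetsVariational1998}: for convex functions, the epi-convergence $\phi_{k_\ell}^{\ast}\to\phi_0^{\ast}$ entails epi-convergence of the horizon functions, whence $\overline{\phi_0^{\ast}}\le\liminf\overline{\phi_{k_\ell}^{\ast}}$ pointwise; combined with $\overline{\phi_{k_\ell}^{\ast}}\ge0$ (which follows from $\phi_{k_\ell}^{\ast}\ge0$), Fatou applies. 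Until that step is supplied, the proof is incomplete.
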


\begin{proof}
Choose a minimizing sequence $\left\{ \phi_{k}\right\} _{k=1}^{\infty}$
, i.e. a sequence such that $F(\phi_{k})\to\inf F$. Using the invariance
property of Lemma \ref{lem:functional-invariance} we may assume that
$\int e^{-\phi_{k}}=1$ and that $\min\phi=\phi(0)$. We may then
apply Lemma \ref{lem:functional-bound} and deduce that 
\[
\phi_{k}(x)\ge\frac{1}{\mu(\RR^{n})}\left(c\left|x\right|-F(\phi_{k})\right)
\]
 for a constant $c>0$ independent of $k$. Since $\left\{ \phi_{k}\right\} _{k=1}^{\infty}$
is a minimizing sequence clearly $\left\{ F(\phi_{k})\right\} _{k=1}^{\infty}$
is bounded from above, so it follows that $\left\{ \phi_{k}\right\} _{k=1}^{\infty}$
is uniformly coercive: $\phi_{k}(x)\ge a\left|x\right|+b$ for all
$k$ and all $x\in\RR^{n}$, where the constants $a$ and $b$ do
not depend on $k$. 

Next we claim that $\sup_{k}\left(\min\phi_{k}\right)=\sup_{k}\left(\phi_{k}(0)\right)<\infty$.
Indeed, if this is not the case then there exists a subsequence $\left\{ \phi_{k_{j}}\right\} _{j=1}^{\infty}$
such that $\min\phi_{k_{j}}\to\infty$. But then 
\[
1=\lim_{j\to\infty}\int e^{-\phi_{k_{j}}}\stackrel{\left(\ast\right)}{=}\int e^{-\lim_{j\to\infty}\phi_{k_{j}}}=\int0=0,
\]
 which is a contradiction. The exchange of limit and integral in $\left(\ast\right)$
is justified by the dominated convergence theorem, since all the functions
$e^{-\phi_{k}}$ are bounded by the integrable function $e^{-\left(a\left|x\right|+b\right)}$. 

It now follows from Theorem \ref{thm:selection} that we may pass
to a subsequence and assume without loss of generality that $\phi_{k}\to\phi$
for a coercive, lower semicontinuous convex function $\phi:\RR^{n}\to(-\infty,\infty]$.
By Lemma \cite[Lemma 15]{ColesantiEtAlValuations2019} we have $\int e^{-\phi}=\lim_{k\to\infty}\int e^{-\phi_{k}}=1$,
so $\phi\in\cvx$. It was shown already by Wijsman (\cite[Theorem 6.2]{WijsmanConvergence1966},
see also \cite[Theorem 11.34]{RockafellarWetsVariational1998}) that
we also have $\phi_{k}^{\ast}\to\phi^{\ast}$, and then by e.g. \cite[Theorem 7.53]{RockafellarWetsVariational1998}
we also have $\overline{\phi_{k}^{\ast}}\to\overline{\phi^{\ast}}$
(to make sense of this last convergence either extend the definition
of epi-convergence to functions defined on $\SS^{n-1}$ in the obvious
way, or consider the functions $\overline{\phi_{k}^{\ast}}$ as $1$-homogeneous
functions on $\RR^{n}$). In particular, we have the pointwise estimate
$\phi^{\ast}(x)\le\liminf_{k\to\infty}\phi_{k}^{\ast}(x)$ and similarly
$\overline{\phi^{\ast}}(x)\le\liminf_{k\to\infty}\overline{\phi_{k}^{\ast}}(x)$
.  

We note that the functions $\left\{ \phi_{k}^{\ast}\right\} $ are
uniformly bounded from below, since 
\[
\inf_{k}\left(\inf\phi_{k}^{\ast}\right)=\inf_{k}\left(-\phi_{k}^{\ast\ast}(0)\right)=-\sup_{k}\phi_{k}(0)>-\infty.
\]
It follows that also $\overline{\phi_{k}^{\ast}}\ge0$, and we may
apply Fatou's lemma and deduce that 
\begin{align*}
F(\phi) & \le\int_{\RR^{n}}\left(\liminf_{k\to\infty}\phi_{k}^{\ast}\right)\dd\mu+\int_{\SS^{n-1}}\left(\liminf_{k\to\infty}\overline{\phi_{k}^{\ast}}\right)\dd\nu\\
 & \le\liminf_{k\to\infty}\left(\int_{\RR^{n}}\phi_{k}^{\ast}\dd\mu+\int_{\SS^{n-1}}\overline{\phi_{k}^{\ast}}\dd\nu\right)=\lim_{k\to\infty}F(\phi_{k})=\inf F.
\end{align*}
 Therefore $\phi$ is the required minimizer. 
\end{proof}
Using the existence of minimizer and the results of the previous section
we can now prove Theorem \ref{thm:main-Minkowksi}:
\begin{proof}[Proof of Theorem \ref{thm:main-Minkowksi}]

The fact that the conditions of the theorem are necessary is exactly
Proposition \ref{prop:minkowksi-necessary}, and the fact that the
solution to the Minkowski problem is unique up to translations was
previously proved in \cite[Corollary 3.3]{RotemAnisotropic2023}.
Therefore we only need to prove the existence of a solution under
our assumptions on $\mu$ and $\nu$. 

Let $\phi$ be the minimizer of $F$ over $\cvx$, whose existence
is guaranteed by Proposition \ref{prop:minimizer-exists}. By Lemma
\ref{lem:functional-invariance} we may add a constant to $\phi$
and assume that $\int_{\RR^{n}}e^{-\phi}=\mu(\RR^{n})$. Define $f=e^{-\phi}$.
We will prove that $f$ is the required solution, that is $\mu_{f}=\mu$
and $\nu_{f}=\nu$. 

Fix a continuous function $\xi:\RR^{n}\to\RR$, which is either bounded
or the support function of a convex body. For $t\in\RR$ we define
$\phi_{t}=(\phi^{\ast}+t\xi)^{\ast}\in\cvx$. Since $\phi$ is a minimizer
of $F$ we have $F(\phi_{t})\ge F(\phi)$ for all $t$, so the function
\[
\alpha(t)=F(\phi_{t})=\int_{\RR^{n}}\phi_{t}^{\ast}\dd\mu+\int_{\SS^{n-1}}\overline{\phi_{t}^{\ast}}\dd\nu-\mu(\RR^{n})\log\int_{\RR^{n}}e^{-\phi_{t}}
\]
 attains a minimum at $t=0$. Since $\phi_{t}^{\ast}=(\phi^{\ast}+t\xi)^{\ast\ast}\le\phi^{\ast}+t\xi$
it follows that the function 
\[
\beta(t)=\int_{\RR^{n}}\left(\phi^{\ast}+t\xi\right)\dd\mu+\int_{\SS^{n-1}}\left(\overline{\phi^{\ast}}+t\overline{\xi}\right)\dd\nu-\mu(\RR^{n})\log\int_{\RR^{n}}e^{-\phi_{t}}
\]
 satisfies $\alpha(t)\le\beta(t)$ for all $t$, and $\alpha(0)=\beta(0)$.
Therefore the function $\beta$ also attains a minimum at $t=0$,
and $\beta'(0)=0$ whenever $\beta$ is differentiable at $t=0$. 

Assume $\xi$ is continuous and bounded, so $\overline{\xi}=0$. It
follows from Proposition \ref{prop:alexandrov-bounded} that $\beta$
is differentiable at the origin, and that 
\[
0=\beta'(0)=\int_{\RR^{n}}\xi\dd\mu-\mu(\RR^{n})\cdot\frac{1}{\int_{\RR^{n}}e^{-\phi}}\int_{\RR^{n}}\xi\dd\mu_{f}=\int_{\RR^{n}}\xi\dd\mu-\int_{\RR^{n}}\xi\dd\mu_{f}.
\]
 Since this holds for all bounded and continuous $\xi$, it follows
from the Riesz representation theorem that $\mu_{f}=\mu$. 

Now assume $\xi$ is a support function of a convex body, so $\overline{\xi}=\xi$.
It follows from Theorem \ref{thm:alexandrov-support} that $\beta$
is again differentiable at $t=0$ and 
\[
0=\beta'(0)=\int_{\RR^{n}}\xi\dd\mu+\int_{\SS^{n-1}}\xi\dd\nu-\frac{\mu(\RR^{n})}{\int_{\RR^{n}}e^{-\phi}}\left(\int_{\RR^{n}}\xi\dd\mu_{f}+\int_{\SS^{n-1}}\xi\dd\nu_{f}\right).
\]
 Using the fact that $\mu(\RR^{n})=\int_{\RR^{n}}e^{-\phi}$ and that
$\mu=\mu_{f}$ we deduce that for all support functions $\xi$ we
have 
\begin{equation}
\int_{\SS^{n-1}}\xi\dd\nu=\int_{\SS^{n-1}}\xi\dd\nu_{f}.\label{eq:nu-equality}
\end{equation}
By linearity (\ref{eq:nu-equality}) also holds for all differences
of support functions, a class that includes all $C^{2}$-smooth functions
on $\SS^{n-1}$. As $C^{2}$ functions are dense in the space of continuous
functions (with the supremum norm) it follows that (\ref{eq:nu-equality})
holds for continuous functions $\xi:\SS^{n-1}\to\RR$, and so again
by Riesz we have $\nu=\nu_{f}$. This completes the proof. 
\end{proof}

\section{\label{sec:cosmic}Cosmic convergence}

We now turn our attention to the continuity of functional surface
area measures. To discuss continuity we need appropriate notions of
convergence both on the class $\lc$ and on pairs of measures of the
form $(\mu_{f},\nu_{f})$. The natural identification between $\cvx$
and $\lc$ gives us a notion of convergence on $\lc$:
\begin{defn}
Fix $\left\{ f_{k}\right\} _{k=1}^{\infty},f\in\lc$. We say that
$f_{k}\to f$ if $(-\log f_{k})\to(-\log f)$ in the sense of epi-convergence. 
\end{defn}

Since the map $x\mapsto-\log x$ is decreasing, our convergence on
$\lc$ is not by itself epi-convergence, but the symmetric notion
of hypo-convergence. Nonetheless we use the same simple notation $f_{k}\to f$,
which should not cause any confusion. 

For pairs of measures we use the (new) notion of cosmic convergence
as was defined in Definition \ref{def:cosmic-conv} of the introduction.
Our first goal is to explain this convergence, and in particular to
explain the name ``cosmic convergence''. Consider the embedding
of $\RR^{n}$ into $\RR^{n+1}$ as $\RR^{n}\cong\RR^{n}\times\left\{ -1\right\} \subseteq\RR^{n+1}$.
Also consider the lower half-sphere 
\[
\SS_{-}^{n}=\left\{ y=(y_{1},y_{2},\ldots,y_{n+1})\in\SS^{n}:\ y_{n+1}<0\right\} \subseteq\RR^{n}.
\]
We write a general point in $\RR^{n+1}$ as $(x,t)$ for $x\in\RR^{n}$
and $t\in\RR$. The spaces $\RR^{n}\cong\RR^{n}\times\left\{ -1\right\} $
and $\SS_{-}^{n}$ are then homeomorphic using the gnomonic projection
$\Phi:\SS_{-}^{n}\to\RR^{n}$ defined by $\Phi(x,t)=-\frac{x}{t}=\frac{x}{\left|t\right|}$.
The inverse map $\Phi^{-1}:\RR^{n}\to\SS_{-}^{n}$ is defined by 
\[
\Phi^{-1}(x)=\left(\frac{x}{\sqrt{1+\left|x\right|^{2}}},-\frac{1}{\sqrt{1+\left|x\right|^{2}}}\right).
\]

Now the space $\SS_{-}^{n}$ has a natural compactification, which
is its usual closure $\overline{\SS_{-}^{n}}$ in $\RR^{n+1}$, i.e.
the closed lower half-sphere. Intuitively this is a compactification
of $\RR^{n}$ with a point at infinity ``in every direction''. This
compactification is known as the cosmic closure of $\RR^{n}$, and
its identification with $\overline{\SS_{-}^{n}}$ is sometimes referred
to as the hemispherical model -- see \cite[Chapter 3A]{RockafellarWetsVariational1998}. 

To every function $\xi:\RR^{n}\to\RR$ we associate a function $\widehat{\xi}:\SS_{-}^{n}\to\RR$
in the following way: We first extend $\xi$ from $\RR^{n}\cong\RR^{n}\times\left\{ -1\right\} $
to a $1$-homogeneous function on the lower half-space $\left\{ (x,t)\in\RR^{n+1}:\ t<0\right\} $,
and then restrict this extension to $\SS_{-}^{n}$. Explicitly, we
have 
\begin{equation}
\widehat{\xi}(x,t)=\frac{\xi\left(\Phi(x,t)\right)}{\left|\left(\Phi(x,t),-1\right)\right|}=\left|t\right|\xi\left(\frac{x}{\left|t\right|}\right).\label{eq:nohat-to-hat}
\end{equation}
 This operation is clearly a bijection between functions on $\RR^{n}$
and functions on $\SS_{-}^{n}$, with the inverse operation given
by 
\begin{equation}
\xi(x)=\left|(x,-1)\right|\cdot\widehat{\xi}\left(\Phi^{-1}(x)\right)=\sqrt{\left|x\right|^{2}+1}\cdot\widehat{\xi}\left(\frac{x}{\sqrt{1+\left|x\right|^{2}}},-\frac{1}{\sqrt{1+\left|x\right|^{2}}}\right).\label{eq:hat-to-nohat}
\end{equation}

We can now explain Definitions \ref{def:cosmically-continuous} and
\ref{def:cosmic-conv}. We have:
\begin{prop}
\label{prop:cosmic-characterization}A function $\xi:\RR^{n}\to\RR$
is cosmically continuous in the sense of Definition \ref{def:cosmically-continuous}
if and only if $\widehat{\xi}:\SS_{-}^{n}\to\RR$ can be extended
to a continuous function on the cosmic closure $\overline{\SS_{-}^{n}}$.
Moreover, in this case the extension $\widehat{\xi}:\overline{\SS_{-}^{n}}\to\RR$
satisfies $\widehat{\xi}(\theta,0)=\overline{\xi}(\theta)$ for all
$\theta\in\SS^{n-1}$. 
\end{prop}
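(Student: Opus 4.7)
The plan is to prove both implications by working explicitly with the formulas \eqref{eq:nohat-to-hat} and \eqref{eq:hat-to-nohat}, exploiting the compactness of $\overline{\SS_-^n}$ to convert pointwise statements into uniform ones. The natural candidate for the extension is $\widehat{\xi}(\theta,0)=\overline{\xi}(\theta)$, which both directions should force on us.

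For the forward direction, suppose $\xi$ is cosmically continuous. I would first observe that $\overline{\xi}$ is continuous on $\SS^{n-1}$: each map $\theta\mapsto\xi(\lambda\theta)/\lambda$ is continuous (by continuity of $\xi$), and uniform convergence preserves continuity. I then define $\widehat{\xi}(\theta,0):=\overline{\xi}(\theta)$ and must check continuity of the extension at a boundary point $(\theta_0,0)$. Take a sequence $(x_k,t_k)\to(\theta_0,0)$ in $\overline{\SS_-^n}$. When $t_k=0$, convergence reduces to continuity of $\overline{\xi}$. When $t_k<0$, set $\lambda_k=|x_k|/|t_k|$ and $\theta_k=x_k/|x_k|$; since $|x_k|^2+t_k^2=1$ we have $|x_k|\to1$, $\lambda_k\to\infty$, and $\theta_k\to\theta_0$. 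A direct manipulation gives
\[
\widehat{\xi}(x_k,t_k)=|t_k|\,\xi(\lambda_k\theta_k)=|x_k|\cdot\frac{\xi(\lambda_k\theta_k)}{\lambda_k},
\]
and uniform convergence of $\xi(\lambda\theta)/\lambda$ to $\overline{\xi}(\theta)$, together with continuity of $\overline{\xi}$, forces the right-hand side to tend to $\overline{\xi}(\theta_0)$. Mixed-case sequences (some $t_k=0$, some negative) reduce to these two subcases.

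For the reverse direction, suppose $\widehat{\xi}$ extends continuously to $\overline{\SS_-^n}$. Continuity of $\xi$ on $\RR^n$ is immediate from \eqref{eq:hat-to-nohat} since $\Phi^{-1}$ is a continuous map into $\SS_-^n$. For the cosmic limit I rewrite
\[
\frac{\xi(\lambda\theta)}{\lambda}=\frac{\sqrt{1+\lambda^2}}{\lambda}\,\widehat{\xi}\Bigl(\tfrac{\lambda\theta}{\sqrt{1+\lambda^2}},-\tfrac{1}{\sqrt{1+\lambda^2}}\Bigr),
\]
and split the difference with $\widehat{\xi}(\theta,0)$ as
\[
\frac{\sqrt{1+\lambda^2}}{\lambda}\bigl[\widehat{\xi}(y_\lambda)-\widehat{\xi}(\theta,0)\bigr]+\Bigl(\tfrac{\sqrt{1+\lambda^2}}{\lambda}-1\Bigr)\widehat{\xi}(\theta,0),
\]
where $y_\lambda$ is the argument above. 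Since $\overline{\SS_-^n}$ is compact, $\widehat{\xi}$ is both bounded and uniformly continuous there. An easy estimate shows $|y_\lambda-(\theta,0)|\to 0$ uniformly in $\theta\in\SS^{n-1}$ as $\lambda\to\infty$, so uniform continuity of $\widehat{\xi}$ handles the first bracket uniformly; the scalar factor $\sqrt{1+\lambda^2}/\lambda-1$ tends to $0$ and $\widehat{\xi}(\theta,0)$ is uniformly bounded, so the second term vanishes uniformly as well. This proves the uniform limit exists and equals $\widehat{\xi}(\theta,0)$, which simultaneously verifies the moreover statement.

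I expect no deep obstacle; the main technical point is keeping the bookkeeping of the factors $\sqrt{1+\lambda^2}/\lambda$ and $|x_k|$ straight so that the uniform estimates go through cleanly. Uniqueness of the extension on the equator need not be argued separately: both directions compute the boundary values explicitly as $\overline{\xi}$.
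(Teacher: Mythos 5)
Your proposal is correct and follows essentially the same route as the paper's proof: both directions hinge on the substitution $\lambda_k=|x_k|/|t_k|$, $\theta_k=x_k/|x_k|$ with the splitting through $\overline{\xi}(\theta_k)$, the continuity of $\overline{\xi}$ as a uniform limit, and the uniform continuity of $\widehat{\xi}$ on the compact set $\overline{\SS_{-}^{n}}$. No gaps worth noting.
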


In other words, cosmically continuous functions are simply continuous
functions on $\RR^{n}$ that can be extended continuously to its cosmic
closure (under our identification of $\xi$ and $\widehat{\xi}$).
The proof of Proposition \ref{prop:cosmic-characterization} is a
straightforward exercise in topology, which we nonetheless include
here for completeness:
\begin{proof}
Assume first that $\widehat{\xi}$ can be extended to a continuous
function $\overline{\SS_{-}^{n}}$, which we also denote by $\widehat{\xi}$.
Formula (\ref{eq:hat-to-nohat}) immediately shows that $\xi$ is
continuous on $\RR^{n}$. Moreover for all $\theta\in\SS^{n-1}$ we
have 
\begin{align*}
\lim_{t\to\infty}\frac{\xi(t\theta)}{t} & =\lim_{t\to\infty}\frac{\sqrt{t^{2}+1}\widehat{\xi}\left(\frac{t\theta}{\sqrt{t^{2}+1}},-\frac{1}{\sqrt{t^{2}+1}}\right)}{t}\\
 & =\lim_{t\to\infty}\frac{\sqrt{t^{2}+1}}{t}\cdot\lim_{t\to\infty}\widehat{\xi}\left(\frac{t}{\sqrt{t^{2}+1}}\theta,-\frac{1}{\sqrt{t^{2}+1}}\right)=\widehat{\xi}(\theta,0)
\end{align*}
 where we used the continuity of $\widehat{\xi}$. Moreover, since
$\widehat{\xi}$ is continuous on the compact set $\overline{\SS_{-}^{n}}$
it is uniformly continuous. Using this and the fact that 
\[
\lim_{t\to\infty}\sup_{\theta\in\SS^{n-1}}\left|\left(\frac{t}{\sqrt{t^{2}+1}}\theta,-\frac{1}{\sqrt{t^{2}+1}}\right)-(\theta,0)\right|=0
\]
 we see that the $\lim_{t\to\infty}\frac{\xi(t\theta)}{t}$ exists
uniformly in $\theta\in\SS^{n-1}$, and hence $\xi$ is cosmically
continuous in the sense of Definition \ref{def:cosmically-continuous}. 

For the converse, assume $\xi:\RR^{n}\to\RR$ is cosmically continuous.
Our goal is to prove that the function $h:\overline{\SS_{-}^{n}}\to\RR$
defined by 
\[
h(x,t)=\begin{cases}
\widehat{\xi}(x,t) & t<0\\
\overline{\xi}(x) & t=0.
\end{cases}
\]
 is the required continuous extension of $\widehat{\xi}$. It is clearly
continuous on $\SS_{-}^{n}$ by (\ref{eq:nohat-to-hat}), so we only
need to check its continuity at every point of the form $(\theta,0)$. 

We note that $\overline{\xi}$ is continuous on $\SS^{n-1}$ as a
uniform limit of the continuous functions $\theta\mapsto\frac{\xi(\lambda\theta)}{\lambda}$.
It is therefore enough to fix a sequence $\left\{ (x_{k},t_{k})\right\} \subseteq\SS_{-}^{n}$
such that $(x_{k},t_{k})\to(\theta,0)$ and prove that $h(x_{k},t_{k})\to h(\theta,0)$.
Define $\theta_{k}=\frac{x_{k}}{\left|x_{k}\right|}\to\theta$, and
write
\begin{align*}
\left|h(x_{k},t_{k})-h(\theta,0)\right| & =\left|\widehat{\xi}(x_{k},t_{k})-\overline{\xi}(\theta)\right|\le\left|\widehat{\xi}(x_{k},t_{k})-\overline{\xi}\left(\theta_{k}\right)\right|+\left|\overline{\xi}\left(\theta_{k}\right)-\overline{\xi}(\theta)\right|.
\end{align*}
 By the continuity of $\overline{\xi}$ we have $\left|\overline{\xi}\left(\theta_{k}\right)-\overline{\xi}(\theta)\right|\to0$.
For the first term we write 
\begin{align*}
\left|\widehat{\xi}(x_{k},t_{k})-\overline{\xi}(\theta_{k})\right| & =\left|\left|t_{k}\right|\xi\left(\frac{x_{k}}{\left|t_{k}\right|}\right)-\overline{\xi}(\theta_{k})\right|\\
 & =\left|\left|x_{k}\right|\frac{\left|t_{k}\right|}{\left|x_{k}\right|}\xi\left(\frac{\left|x_{k}\right|\theta_{k}}{\left|t_{k}\right|}\right)-\left|x_{k}\right|\overline{\xi}(\theta_{k})+\overline{\xi}(\theta_{k})\left(\left|x_{k}\right|-1\right)\right|\\
 & \le\left|x_{k}\right|\cdot\left|\frac{\xi(s_{k}\theta_{k})}{s_{k}}-\overline{\xi}(\theta_{k})\right|+\left(\max\overline{\xi}\right)\cdot\left(\left|x_{k}\right|-1\right)\\
 & \xrightarrow{k\to\infty}1\cdot0+\left(\max\overline{\xi}\right)(1-1)=0
\end{align*}
 where $s_{k}=\frac{\left|x_{k}\right|}{\left|t_{k}\right|}\to\infty$,
and where we used the uniform convergence of $\frac{\xi(\lambda\theta)}{\lambda}$
to $\overline{\xi}$. This shows that $h(x_{k},t_{k})\to h(\theta,0)$,
finishing the proof. 
\end{proof}
We can now also better understand the notion of cosmic convergence.
To every Borel measure $\mu$ on $\RR^{n}$ we can associate a measure
$\widehat{\mu}$ on $\SS_{-}^{n}$ via the relation 
\begin{equation}
\int_{\RR^{n}}\xi\dd\mu=\int_{\SS^{n-1}}\widehat{\xi}\dd\widehat{\mu}\label{eq:measure-identification}
\end{equation}
 for all cosmically continuous functions $\xi:\RR^{n}\to\RR$. Every
Borel measure $\nu$ on $\SS^{n-1}$ can also be considered as a measure
$\widehat{\nu}$ on the equator $\overline{\SS_{-}^{n}}\setminus\SS_{-}^{n}=\left\{ (x,0):\ x\in\SS^{n-1}\right\} $
by identifying this equator with $\SS^{n-1}$ in the obvious way.
Together $\widehat{\mu}+\widehat{\nu}$ is a single measure on the
cosmic closure $\overline{\SS_{-}^{n}}$ which is in a natural one-to-one
correspondence with the pair $\left(\mu,\nu\right)$. We then have:
\begin{prop}
\label{prop:cosmic-weak}$\left(\mu_{k,}\nu_{k}\right)\to\left(\mu,\nu\right)$
cosmically if and only if $\widehat{\mu_{k}}+\widehat{\nu_{k}}\to\widehat{\mu}+\widehat{\nu}$
weakly on $\overline{\SS_{-}^{n}}$. 
\end{prop}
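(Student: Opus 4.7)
The plan is to observe that both sides of the stated equivalence reduce to the same statement once the identifications already in place are unwound.

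Since $\overline{\SS_{-}^{n}}$ is a compact metric space, weak convergence $\widehat{\mu_{k}}+\widehat{\nu_{k}}\to\widehat{\mu}+\widehat{\nu}$ is by definition the assertion that
\[
\int_{\overline{\SS_{-}^{n}}}h\,\dd(\widehat{\mu_{k}}+\widehat{\nu_{k}})\xrightarrow{k\to\infty}\int_{\overline{\SS_{-}^{n}}}h\,\dd(\widehat{\mu}+\widehat{\nu})
\]
for every $h\in C(\overline{\SS_{-}^{n}})$. I would therefore first show that the map $\xi\mapsto\widehat{\xi}$ of Proposition \ref{prop:cosmic-characterization} (extended by $\overline{\xi}$ on the equator) is a bijection between cosmically continuous functions on $\RR^{n}$ and $C(\overline{\SS_{-}^{n}})$. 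Injectivity is immediate from (\ref{eq:hat-to-nohat}). For surjectivity I would take any $h\in C(\overline{\SS_{-}^{n}})$, define $\xi$ on $\RR^{n}$ by (\ref{eq:hat-to-nohat}) applied to $h|_{\SS_{-}^{n}}$, and invoke the ``if'' direction of Proposition \ref{prop:cosmic-characterization} to conclude that $\xi$ is cosmically continuous and that the associated continuous extension of $\widehat{\xi}$ to $\overline{\SS_{-}^{n}}$ coincides with $h$, with $\overline{\xi}(\theta)=h(\theta,0)$.

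Finally, partitioning $\overline{\SS_{-}^{n}}=\SS_{-}^{n}\sqcup\{(\theta,0):\theta\in\SS^{n-1}\}$ and using the definition (\ref{eq:measure-identification}) of $\widehat{\mu}$ together with the identification of $\widehat{\nu}$ with $\nu$ on the equator, every such integral splits as
\[
\int_{\overline{\SS_{-}^{n}}}h\,\dd(\widehat{\mu}+\widehat{\nu})=\int_{\SS_{-}^{n}}\widehat{\xi}\,\dd\widehat{\mu}+\int_{\SS^{n-1}}\overline{\xi}\,\dd\nu=\int_{\RR^{n}}\xi\,\dd\mu+\int_{\SS^{n-1}}\overline{\xi}\,\dd\nu,
\]
and identically for $\widehat{\mu_{k}}+\widehat{\nu_{k}}$ and $(\mu_{k},\nu_{k})$. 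Under the bijection above, weak convergence on $\overline{\SS_{-}^{n}}$ thus becomes literally cosmic convergence of $(\mu_{k},\nu_{k})$ in the sense of Definition \ref{def:cosmic-conv}, proving both directions at once. There is no real obstacle here: the only content beyond definition-chasing is the surjectivity half of the $\xi\leftrightarrow h$ correspondence, which is already packaged in Proposition \ref{prop:cosmic-characterization}.
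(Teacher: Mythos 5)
Your proposal is correct and follows essentially the same route as the paper: both unwind the definitions, split the integral over $\overline{\SS_{-}^{n}}$ into the pieces over $\SS_{-}^{n}$ and the equator, and invoke Proposition \ref{prop:cosmic-characterization} to identify cosmically continuous functions on $\RR^{n}$ with continuous functions on $\overline{\SS_{-}^{n}}$. Your explicit remark that the surjectivity of the correspondence $\xi\leftrightarrow h$ is the only non-trivial point is a slightly more careful articulation of what the paper's proof uses implicitly, but the argument is the same.
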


In other words, the cosmic convergence $\left(\mu_{k,}\nu_{k}\right)\to\left(\mu,\nu\right)$
is the same as weak convergence on the cosmic closure $\overline{\SS_{-}^{n}}$
of $\RR^{n}$, under our identification of the pair $(\mu,\nu)$ with
the measure $\widehat{\mu}+\widehat{\nu}$. 
\begin{proof}
This is an exercise in expanding the definitions. By definition, $\left(\mu_{k,}\nu_{k}\right)\to\left(\mu,\nu\right)$
cosmically if for every cosmically continuous function $\xi:\RR^{n}\to\RR$
we have
\begin{equation}
\int_{\RR^{n}}\xi\dd\mu_{k}+\int_{\SS^{n-1}}\overline{\xi}\dd\nu_{k}\xrightarrow{k\to\infty}\int_{\RR^{n}}\xi\dd\mu+\int_{\SS^{n-1}}\overline{\xi}\dd\nu.\label{eq:cosmic-equiv-def}
\end{equation}
By (\ref{eq:measure-identification}) and the ``moreover'' part
of Proposition \ref{prop:cosmic-characterization} we have 
\begin{align*}
\int_{\RR^{n}}\xi\dd\mu+\int_{\SS^{n-1}}\overline{\xi}\dd\nu & =\int_{\SS_{-}^{n}}\widehat{\xi}\widehat{\mu}+\int_{\SS^{n-1}}\widehat{\xi}(\theta,0)\dd\nu(\theta)\\
 & =\int_{\SS_{-}^{n}}\widehat{\xi}\dd\widehat{\mu}+\int_{\overline{\SS_{-}^{n}}\setminus\SS_{-}^{n}}\widehat{\xi}\dd\widehat{\nu}=\int_{\overline{\SS_{-}^{n}}}\widehat{\xi}\dd\left(\widehat{\mu}+\widehat{\nu}\right).
\end{align*}
 The same of course is true for $\left(\mu_{k},\nu_{k}\right)$, so
the cosmic convergence (\ref{eq:cosmic-equiv-def}) is equivalent
to 
\[
\int_{\overline{\SS_{-}^{n}}}\widehat{\xi}\dd\left(\widehat{\mu_{k}}+\widehat{\nu_{k}}\right)\to\int_{\overline{\SS_{-}^{n}}}\widehat{\xi}\dd\left(\widehat{\mu}+\widehat{\nu}\right)
\]
 for all cosmically continuous function $\xi:\RR^{n}\to\RR$. By Proposition
\ref{prop:cosmic-characterization} this equivalent to saying that
\[
\int_{\overline{\SS_{-}^{n}}}\rho\dd\left(\widehat{\mu_{k}}+\widehat{\nu_{k}}\right)\to\int_{\overline{\SS_{-}^{n}}}\rho\dd\left(\widehat{\mu}+\widehat{\nu}\right)
\]
 for all continuous functions $\rho:\overline{\SS_{-}^{n}}\to\RR$,
which precisely means that $\widehat{\mu_{k}}+\widehat{\nu_{k}}\to\widehat{\mu}+\widehat{\nu}$
weakly on $\overline{\SS_{-}^{n}}$. 
\end{proof}
Given Proposition \ref{prop:cosmic-weak}, it is natural to ask for
an explicit description of the measure $\widehat{\mu_{f}}+\widehat{\nu_{f}}$
for $f=e^{-\phi}\in\cvx$. Recall that the domain of a function $\phi\in\cvx$
is 
\[
\dom(\phi)=\left\{ x\in\RR^{n}:\ \phi(x)<\infty\right\} \subseteq\RR^{n},
\]
 and the epigraph of $\phi$ is 
\[
\epi(\phi)=\left\{ (x,t)\in\RR^{n+1}:\ x\in\dom(\phi),\ t\ge\phi(x)\right\} \subseteq\RR^{n+1}.
\]
As usual, we denote by $\HH^{n}$ the $n$-dimensional Hausdorff measure.
The epigraph $\epi\left(\phi\right)$ is an (unbounded) closed convex
set with non-empty interior, so the Gauss map $n_{\epi(\phi)}$ is
defined $\HH^{n}$-almost everywhere. Moreover, for every $(x,t)\in\partial\epi(\phi)$
it is clear that $(x,t')\in\epi(\phi)$ for all $t'\ge t$, which
implies that $n_{\epi(\phi)}(x,t)\in\overline{\SS_{-}^{n}}$ . We
can now state the result:
\begin{prop}
\label{prop:boundary-repr}For every $f=e^{-\phi}\in\lc$ we have
\[
\widehat{\mu_{f}}+\widehat{\nu_{f}}=\left(n_{\epi(\phi)}\right)_{\sharp}\left(\left.e^{-t}\dd\HH^{n}(x,t)\right|_{\partial\epi(\phi)}\right).
\]
 Explicitly, this means that for all continuous functions $\rho:\overline{\SS_{-}^{n}}\to\RR$
one has 
\[
\int_{\overline{\SS_{-}^{n}}}\rho\dd\left(\widehat{\mu_{f}}+\widehat{\nu_{f}}\right)=\int_{\partial\epi(\phi)}\rho\left(n_{\epi(\phi)}(x,t)\right)e^{-t}\dd\HH^{n}(x,t).
\]
\end{prop}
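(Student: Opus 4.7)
The plan is to verify the displayed identity for an arbitrary continuous test function $\rho:\overline{\SS_{-}^{n}}\to\RR$ by splitting $\partial\epi(\phi)$ as $G\cup V$, where
\[
G=\{(x,\phi(x)):x\in\inte(\dom\phi)\},\qquad V=\{(x,t):x\in\partial\supp(f),\ t\ge\phi(x)\}.
\]
These pieces are disjoint and cover all of $\partial\epi(\phi)$, because for a convex function $\phi$ one has $\inte(\dom\phi)=\inte(\supp(f))$ and $\partial\dom\phi=\partial\supp(f)$. The key point is that on $G$ the Gauss map $n_{\epi(\phi)}$ takes values in $\SS_{-}^{n}$, while on $V$ it lies on the equator $\overline{\SS_{-}^{n}}\setminus\SS_{-}^{n}$, so the two pieces will exactly reproduce $\widehat{\mu_{f}}$ and $\widehat{\nu_{f}}$, respectively.

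For the graph piece, $\phi$ is differentiable $\HH^{n}$-a.e.\ on $\inte(\dom\phi)$, and at such points one has
\[
n_{\epi(\phi)}(x,\phi(x))=\frac{(\nabla\phi(x),-1)}{\sqrt{1+|\nabla\phi(x)|^{2}}},\qquad \dd\HH^{n}|_{G}=\sqrt{1+|\nabla\phi(x)|^{2}}\,\dd x.
\]
Define $\xi:\RR^{n}\to\RR$ by $\widehat{\xi}=\rho|_{\SS_{-}^{n}}$; formula (\ref{eq:nohat-to-hat}) gives $\rho(n_{\epi(\phi)}(x,\phi(x)))=\xi(\nabla\phi(x))/\sqrt{1+|\nabla\phi(x)|^{2}}$. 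The two Jacobian-style factors cancel exactly, leaving
\[
\int_{G}\rho(n_{\epi(\phi)}(x,t))e^{-t}\,\dd\HH^{n}(x,t)=\int_{\RR^{n}}\xi(\nabla\phi(x))e^{-\phi(x)}\,\dd x=\int_{\RR^{n}}\xi\,\dd\mu_{f}=\int_{\SS_{-}^{n}}\rho\,\dd\widehat{\mu_{f}},
\]
where the second equality is the definition $\mu_{f}=(\nabla\phi)_{\sharp}(f\,\dd x)$ and the third is the identification (\ref{eq:measure-identification}) combined with $\widehat{\xi}=\rho$ on $\SS_{-}^{n}$.

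For the vertical piece, at any $x\in\partial\supp(f)$ where $n_{\supp(f)}(x)$ is defined and any $t>\phi(x)$, the set $\epi(\phi)$ is locally a right half-cylinder over $\supp(f)$, so $n_{\epi(\phi)}(x,t)=(n_{\supp(f)}(x),0)$, a point on the equator. Since $V$ inherits from $\partial\supp(f)\times\RR$ a product structure under which $\HH^{n}|_{V}$ agrees with $\HH^{n-1}|_{\partial\supp(f)}\otimes\HH^{1}|_{\RR}$, Fubini gives
\[
\int_{V}\rho(n_{\epi(\phi)}(x,t))e^{-t}\,\dd\HH^{n}(x,t)=\int_{\partial\supp(f)}\rho(n_{\supp(f)}(x),0)\Bigl(\int_{\phi(x)}^{\infty}e^{-t}\,\dd t\Bigr)\,\dd\HH^{n-1}(x),
\]
and the inner integral equals $f(x)$. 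By the definition of $\nu_{f}$ as a pushforward, this equals $\int_{\SS^{n-1}}\rho(\theta,0)\,\dd\nu_{f}(\theta)=\int_{\overline{\SS_{-}^{n}}\setminus\SS_{-}^{n}}\rho\,\dd\widehat{\nu_{f}}$. Summing the graph and vertical contributions yields the claimed identity.

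I expect the main obstacle to be the regularity bookkeeping on the boundary: verifying the pointwise formulas for $n_{\epi(\phi)}$ on each piece up to an $\HH^{n}$-null set (standard via Alexandrov's theorem on $G$; requiring a little care on $V$ because $\phi$ need not be continuous on $\partial\dom\phi$, though the $\HH^{n}$-negligible exceptional set is harmless) and justifying the Fubini formula for $\HH^{n}$ on the vertical strip (standard once one notes that $\partial\supp(f)$ is $(n-1)$-rectifiable as the boundary of a convex set with nonempty interior). The degenerate case $\supp(f)=\RR^{n}$ needs no separate treatment, since then $V=\emptyset$ and $\nu_{f}\equiv 0$, so only the graph piece contributes on both sides.
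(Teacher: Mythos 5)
Your proof is correct and follows essentially the same route as the paper's: the same decomposition of $\partial\epi(\phi)$ into the graph part and the vertical wall over $\partial\dom(\phi)$, the same change of variables $u\mapsto(u,\phi(u))$ with Jacobian $\sqrt{1+|\nabla\phi|^{2}}$ cancelling against the homogeneity factor in $\widehat{\xi}(n_{\epi(\phi)})$, and the same Fubini computation with $\int_{\phi(x)}^{\infty}e^{-t}\,\dd t=f(x)$ on the wall. The only cosmetic difference is that you make the two pieces disjoint from the outset, whereas the paper lets them overlap on an $\HH^{n}$-null set.
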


\begin{proof}
Write $\rho=\widehat{\xi}$ for a cosmically continuous function $\xi:\RR^{n}\to\RR$.
We partition $\partial\epi(\phi)$ as $\partial\epi(\phi)=A\cup B$
for 
\[
A=\left\{ (x,\phi(x)):\ x\in\dom(\phi)\right\} 
\]
 and 
\[
B=\left\{ (x,t):\ x\in\partial\dom(\phi),\ t\ge\phi(x)\right\} .
\]
 We first prove that 
\begin{equation}
\int_{\SS_{-}^{n}}\widehat{\xi}\dd\widehat{\mu_{f}}=\int_{A}\widehat{\xi}\left(n_{\epi(\phi)}(x,t)\right)e^{-t}\dd\HH^{n}(x,t).\label{eq:boundary-A}
\end{equation}
 Indeed, by definition we have 
\[
\int_{\SS_{-}^{n}}\widehat{\xi}\dd\widehat{\mu_{f}}=\int_{\RR^{n}}\xi\dd\mu_{f}=\int_{\dom(\phi)}\xi\left(\nabla\phi(u)\right)e^{-\phi(u)}\dd u.
\]
 Perform the (locally Lipschitz) change of variables $T:\dom\left(\phi\right)\to A$
defined by $Tu=(u,\phi(u))$. The Jacobian of $T$ is 
\[
JT(u)=\det\left(DT(u)^{t}\cdot DT(u)\right)=\sqrt{\det\left(Id+\nabla\phi\otimes\nabla\phi\right)}=\sqrt{1+\left|\nabla\phi(u)\right|^{2}},
\]
 so by the change of variables formula for Lipschitz maps (see \cite[Theorem 3.9]{EvansGariepyMeasure2015})
\begin{equation}
\int_{\SS_{-}^{n}}\widehat{\xi}\dd\widehat{\mu_{f}}=\int_{A}\xi\left(\nabla\phi(x)\right)e^{-t}\cdot\frac{1}{\sqrt{1+\left|\nabla\phi(x)\right|^{2}}}\dd\HH^{n}(x,t).\label{eq:boundary-A-middle}
\end{equation}
 But for almost all $(x,t)\in A$ we have $n_{\epi(\phi)}(x,t)=\frac{\left(\nabla\phi(x),-1\right)}{\left|\left(\nabla\phi(x),-1\right)\right|}$.
Therefore 
\begin{equation}
\widehat{\xi}\left(n_{\epi(\phi)}(x,t)\right)=\widehat{\xi}\left(\frac{\left(\nabla\phi(x),-1\right)}{\left|\left(\nabla\phi(x),-1\right)\right|}\right)=\frac{\xi(\nabla\phi(x))}{\sqrt{1+\left|\nabla\phi(x)\right|^{2}}},\label{eq:boundary-hat}
\end{equation}
 where we used formula (\ref{eq:hat-to-nohat}). Equations (\ref{eq:boundary-A-middle})
and (\ref{eq:boundary-hat}) together clearly imply (\ref{eq:boundary-A}). 

We now prove that 
\begin{equation}
\int_{\overline{\SS_{-}^{n}}\setminus\SS_{-}^{n}}\widehat{\xi}\dd\widehat{\nu_{f}}=\int_{B}\widehat{\xi}\left(n_{\epi(\phi)}(x,t)\right)e^{-t}\dd\HH^{n}(x,t).\label{eq:boundary-B}
\end{equation}
 Indeed, for almost every point $(x,t)\in B$ we have $n_{\epi(\phi)}(x,t)=(n_{\dom(\phi)}(x),0)$.
Using this fact, Fubini's theorem, and the connection between $\widehat{\xi}$
and $\overline{\xi}$ from Proposition \ref{prop:cosmic-characterization}
we have 
\begin{align*}
\int_{B}\widehat{\xi}\left(n_{\epi(\phi)}\left(x,t\right)\right)e^{-t}\dd\HH^{n}(x,t) & =\int_{x\in\partial\dom(\phi)}\int_{t=\phi(x)}^{\infty}\widehat{\xi}\left(n_{\epi(\phi)}\left(x,t\right)\right)e^{-t}\dd t\dd\HH^{n-1}(x)\\
 & =\int_{x\in\partial\dom(\phi)}\int_{t=\phi(x)}^{\infty}\widehat{\xi}\left(n_{\dom(\phi)}(x),0\right)e^{-t}\dd t\dd\HH^{n-1}(x)\\
 & =\int_{x\in\partial\dom(\phi)}\overline{\xi}\left(n_{\dom(\phi)}(x)\right)e^{-\phi(x)}\dd\HH^{n-1}(x)\\
 & =\int_{\SS^{n-1}}\overline{\xi}\dd\nu_{f}=\int_{\overline{\SS_{-}^{n}}\setminus\SS_{-}^{n}}\widehat{\xi}\dd\widehat{\nu_{f}}
\end{align*}
 as claimed.

Since $\HH^{n}(A\cap B)=0$, the identities (\ref{eq:boundary-A})
and (\ref{eq:boundary-B}) together imply the result.
\end{proof}
We record for later use that what we've actually shown is that for
every cosmically continuous function $\xi:\RR^{n}\to\RR$ one has
\begin{equation}
\int_{\RR^{n}}\xi\dd\mu_{f}+\int_{\SS^{n-1}}\overline{\xi}\dd\nu_{f}=\int_{\partial\epi(\phi)}\widehat{\xi}\left(n_{\epi(\phi)}(x,t)\right)e^{-t}\dd\HH^{n}(x,t).\label{eq:var-alternative}
\end{equation}

Using equation (\ref{eq:var-alternative}) we can rewrite the first
variation formula (\ref{eq:representation}) in the following nice
way:
\begin{cor}
For every $f=e^{-\phi}\in\lc$ and an upper semicontinuous log-concave
function $g:\RR^{n}\to\RR$ we have
\[
\delta(f,g)=\int_{\partial\epi(\phi)}\widehat{h_{g}}\left(n_{\epi(\phi)}(x,t)\right)e^{-t}\dd\HH^{n}(x,t).
\]
 
\end{cor}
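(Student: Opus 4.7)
The plan is to deduce the corollary directly by combining Theorem~\ref{thm:representation} with equation~(\ref{eq:var-alternative}). By Theorem~\ref{thm:representation} and the identity $h_{\supp(g)} = \overline{h_g}$ recalled in the introduction, we may rewrite
\[
\delta(f,g) = \int_{\RR^{n}}h_{g}\dd\mu_{f} + \int_{\SS^{n-1}}\overline{h_g}\dd\nu_{f},
\]
which is exactly the left-hand side of (\ref{eq:var-alternative}) with $\xi = h_g$. Thus if we can justify applying (\ref{eq:var-alternative}) with this choice of $\xi$, the claim follows immediately.

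The main obstacle is that $h_g$ need not be cosmically continuous: as a lower semicontinuous convex function, it can take the value $+\infty$, and its horizon function $\overline{h_g} = h_{\supp(g)}$ may also be infinite when $\supp(g)$ is unbounded. To handle this I would approximate $g$ by its truncations $g_N(x) = g(x)\,\oo_{\overline{B}_N(0)}(x)$, which are upper semicontinuous log-concave with compact support. For each $N$, $h_{g_N} = (-\log g_N)^{\ast}$ is a supremum of affine functions over the compact set $\supp(g_N)$ and is therefore finite everywhere, hence continuous on $\RR^n$ by the standard fact that finite convex functions on $\RR^n$ are continuous. Since $\supp(g_N)$ is bounded, $\overline{h_{g_N}} = h_{\supp(g_N)}$ is finite and continuous on $\SS^{n-1}$, and by convexity the map $\lambda \mapsto h_{g_N}(\lambda\theta)/\lambda$ is monotone increasing to $\overline{h_{g_N}}(\theta)$, so Dini's theorem upgrades pointwise convergence to uniform convergence. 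Thus $h_{g_N}$ is cosmically continuous, and (\ref{eq:var-alternative}) yields
\[
\int_{\RR^{n}}h_{g_N}\dd\mu_{f} + \int_{\SS^{n-1}}\overline{h_{g_N}}\dd\nu_{f} = \int_{\partial\epi(\phi)}\widehat{h_{g_N}}\bigl(n_{\epi(\phi)}(x,t)\bigr)e^{-t}\dd\HH^{n}(x,t).
\]

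Finally I would pass to the limit $N \to \infty$. As $g_N \uparrow g$ pointwise, $-\log g_N \downarrow -\log g$, and the order-reversing property of the Legendre transform gives $h_{g_N} \uparrow h_g$; the horizon functions and homogeneous extensions then also satisfy $\overline{h_{g_N}} \uparrow \overline{h_g}$ and $\widehat{h_{g_N}} \uparrow \widehat{h_g}$ on their respective domains. Since $h_{g_1}$ is bounded below by some constant $C \in \RR$ and $\mu_f, \nu_f$ are finite measures (and $\int_{\partial\epi(\phi)} e^{-t}\dd\HH^n(x,t)$ is finite, being the cosmic mass $\widehat{\mu_f}(\SS_-^n) + \widehat{\nu_f}(\overline{\SS_-^n}\setminus\SS_-^n) = \mu_f(\RR^n) + \nu_f(\SS^{n-1})$ by Proposition~\ref{prop:boundary-repr}), the monotone convergence theorem applies to the non-negative sequences $h_{g_N} - C$, $\overline{h_{g_N}} - C$, and $\widehat{h_{g_N}} - C$, yielding the desired identity (with the understanding that both sides are allowed to equal $+\infty$ when $\overline{h_g}$ is not $\nu_f$-integrable, which causes no issue since $\delta(f,g)$ is then also $+\infty$ by Theorem~\ref{thm:representation}). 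I expect this truncation-and-monotone-convergence step to be the only nontrivial part of the argument; everything else is bookkeeping.
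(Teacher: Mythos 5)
Your route is genuinely different from the paper's. The paper does not approximate: it observes that the change-of-variables computation in the proof of Proposition \ref{prop:boundary-repr} (split $\partial\epi(\phi)=A\cup B$, parametrize $A$ by $u\mapsto(u,\phi(u))$, use Fubini on $B$) never actually uses the continuity or finiteness of $\xi$, so it can be re-run verbatim with $\xi=h_{g}$ even though $h_{g}$ may take the value $+\infty$; the corollary is then immediate from (\ref{eq:representation}). You instead treat (\ref{eq:var-alternative}) as a black box, truncate $g$ to $g_{N}=g\cdot\oo_{\overline{B}_{N}(0)}$ so that $h_{g_{N}}$ becomes cosmically continuous, and pass to the limit. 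This is a legitimate alternative and arguably cleaner in that it never reopens the proof of Proposition \ref{prop:boundary-repr}; its cost is the extra limiting argument, which is where your one real error sits.

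The error: $h_{g_{1}}$ (and more generally $h_{g_{N}}$) is \emph{not} bounded below by a constant. One only has $h_{g_{N}}(y)\ge\langle x_{0},y\rangle+\log g(x_{0})$ for a point $x_{0}$ with $g(x_{0})>0$, and this affine minorant genuinely tends to $-\infty$ in some directions (take $g=\oo_{[1,2]}$ in dimension one: then $h_{g_{1}}(y)=y$). So the sequences $h_{g_{N}}-C$ are not non-negative and monotone convergence does not apply as written. The fix is to subtract the affine function $\ell(y)=\langle x_{0},y\rangle+\log g(x_{0})$ instead of a constant: $h_{g_{N}}-\ell\ge0$ is increasing, $\overline{\ell}(\theta)=\langle x_{0},\theta\rangle$, and $\ell$ is cosmically continuous, so (\ref{eq:var-alternative}) applies to $\ell$ itself and shows that $\ell$, $\overline{\ell}$, $\widehat{\ell}$ are absolutely integrable against $\mu_{f}$, $\nu_{f}$, and $e^{-t}\dd\HH^{n}|_{\partial\epi(\phi)}$ respectively --- for $\mu_{f}$ this uses the finite first moment from Proposition \ref{prop:minkowksi-necessary}. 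With that substitution your monotone convergence step goes through. Two smaller points: $g_{N}$ may be identically $0$ for small $N$ (harmless, discard those $N$), and $\lambda\mapsto h_{g_{N}}(\lambda\theta)/\lambda$ is monotone only after replacing it by the difference quotient $\bigl(h_{g_{N}}(\lambda\theta)-h_{g_{N}}(0)\bigr)/\lambda$, which is what Dini's theorem should be applied to.
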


Here we define $\widehat{h_{g}}:\overline{\SS_{-}^{n}}\to(-\infty,\infty]$
in the natural way: $h_{g}:\RR^{n}\to(-\infty,\infty]$ is usually
not cosmically continuous, but we may still define $\widehat{h_{g}}$
on $\SS_{-}^{n}$ by formula (\ref{eq:nohat-to-hat}), and then on
the equator $\overline{\SS_{-}^{n}}\setminus\SS_{-}^{n}$ define $\widehat{h_{g}}(\theta,0)=\overline{h_{g}}(\theta)$.
Even though $h_{g}$ is not cosmically continuous, the proof that
\[
\int h_{g}\dd\mu_{f}+\int\overline{h_{g}}\dd\nu_{f}=\int_{\partial\epi(\phi)}\widehat{h_{g}}\left(n_{\epi(\phi)}(x,t)\right)e^{-t}\dd\HH^{n}(x,t)
\]
still works in exactly the same way as the proof of (\ref{eq:var-alternative}).
The corollary follows immediately from this identity and (\ref{eq:representation}). 

\section{\label{sec:continuity}Continuity of functional surface area measures}

In this section we prove Theorem \ref{thm:measures-cont}. Fix $\left\{ f_{k}\right\} _{k=1}^{\infty},f\in\lc$
such that $f_{k}\to f$. Write as usual $f_{k}=e^{-\phi_{k}}$ and
$f=e^{-\phi}$. Our goal is to prove that $\left(\mu_{f_{k}},\nu_{f_{k}}\right)\to\left(\mu_{f},\nu_{f}\right)$
cosmically. By (\ref{eq:var-alternative}), we need to prove that
for every cosmically continuous function $\xi:\RR^{n}\to\RR$ one
has 
\[
\int_{\partial\epi(\phi_{k})}\widehat{\xi}\left(n_{\epi(\phi_{k})}(x,t)\right)e^{-t}\dd\HH^{n}(x,t)\to\int_{\partial\epi(\phi)}\widehat{\xi}\left(n_{\epi(\phi)}(x,t)\right)e^{-t}\dd\HH^{n}(x,t).
\]

The main idea of the proof is to perform a change of variables and
transform the domain of integration to $\RR^{n}$, using a new notion
of a \emph{curvilinear radial function} for convex functions. For
technical reasons we will need to work with the following classes
of convex functions:
\begin{defn}
For every $\epsilon>0$ we set 
\[
\cvx^{(\epsilon)}=\left\{ \phi\in\cvx:\ \phi(x)<\min\phi+\frac{1}{2}\text{ for all }x\in\RR^{n}\text{ with }\left|x\right|\le\epsilon\right\} .
\]
\end{defn}

The choice of the constant $\frac{1}{2}$ in the definition of $\cvx^{(\epsilon)}$
is immaterial, but choosing a constant smaller than $1$ helps to
eliminate some other constants later in the proof. The following rather
technicals properties of the classes $\cvx^{(\epsilon)}$ will be
crucial for the proof:
\begin{lem}
\label{lem:cvx-eps-properties}
\begin{enumerate}
\item \label{enu:cvx-eps-trans}For every $\phi\in\cvx$ there exists $v\in\RR^{n}$
and $\epsilon>0$ such that $\tau_{v}\phi\in\cvx^{(\epsilon)}$, where
$\left(\tau_{v}\phi\right)(x)=\phi(x+v)$. 
\item \label{enu:cvx-eps-open}Assume that $\left\{ \phi_{k}\right\} _{k=1}^{\infty}\subseteq\cvx$
and $\phi_{k}\to\phi\in\cvx^{(\epsilon)}$ for some $\epsilon>0$.
Then $\phi_{k}\in\cvx^{(\epsilon/2)}$ for all large enough $k$. 
\item \label{enu:cvx-eps-bound}Assume $\phi\in\cvx^{(\epsilon)}$. Fix
$(x_{0},t_{0})\in\partial\epi(\phi)$ and let $v=(v_{x},v_{t})\in\RR^{n+1}$
be an outer normal to $\epi(\phi)$ at $(x_{0},t_{0})$. Then 
\[
\left\langle (x_{0},-1),v\right\rangle =\left\langle x_{0},v_{x}\right\rangle -v_{t}\ge\min\left(\epsilon,\frac{1}{2}\right)\left|v\right|^{2}.
\]
\end{enumerate}
\end{lem}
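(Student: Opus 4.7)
For part (1), the plan is to exploit the continuity of any convex function on the interior of its effective domain. Since $\phi \in \cvx$ forces $\dom(\phi)$ to have non-empty interior and $\phi$ to be coercive and lower semicontinuous, $\phi$ attains its minimum $m := \min \phi$ at some point $x^*$. Taking any $y_0 \in \inte(\dom(\phi))$, the convex combination $v_\lambda = (1-\lambda)x^* + \lambda y_0$ lies in $\inte(\dom(\phi))$ for every $\lambda \in (0,1]$ and satisfies $\phi(v_\lambda) \le m + \lambda(\phi(y_0) - m)$, which is strictly less than $m + \tfrac{1}{2}$ once $\lambda$ is small. Setting $v = v_\lambda$ for such $\lambda$, the translate $\tau_v\phi$ has $(\tau_v\phi)(0) = \phi(v) < m + \tfrac{1}{2}$ with $0 \in \inte(\dom(\tau_v\phi))$, so the continuity of the convex function $\tau_v\phi$ at the origin furnishes the required $\epsilon > 0$.

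For part (2), I plan to combine uniform convergence of epi-convergent convex functions on compact subsets of $\inte(\dom(\phi))$ with convergence of infima. Since $\phi \in \cvx^{(\epsilon)}$ places $\overline{B_\epsilon}$ strictly inside $\inte(\dom(\phi))$, standard results on epi-convergence of convex functions (e.g.\ Rockafellar--Wets) give $\phi_k \to \phi$ uniformly on $\overline{B_{\epsilon/2}}$. Set $M := \max_{|x|\le\epsilon/2}\phi(x)$; the strict inequality $\phi < m + \tfrac{1}{2}$ on $\overline{B_\epsilon}$ gives $\eta := m + \tfrac{1}{2} - M > 0$. For $k$ large, uniform convergence yields $\max_{|x|\le\epsilon/2}\phi_k(x) < M + \eta/4$, while convergence of minima (from the general upper bound $\limsup \min\phi_k \le m$ together with equi-coercivity of the sequence, extracted from $\int e^{-\phi_k} \to \int e^{-\phi} < \infty$) gives $\min \phi_k > m - \eta/4$. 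Subtracting yields $\max_{|x|\le\epsilon/2}\phi_k - \min\phi_k < \tfrac{1}{2} - \eta/2 < \tfrac{1}{2}$.

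For part (3), the plan is to apply the supporting-hyperplane inequality for the convex set $\epi(\phi)$ at $(x_0, t_0)$, tested against a cleverly chosen boundary point supplied by $\phi \in \cvx^{(\epsilon)}$. For every $(y,s) \in \epi(\phi)$, outer-normality yields $\langle y - x_0, v_x\rangle + (s - t_0)v_t \le 0$, and also $v_t \le 0$ (by taking $s \to \infty$). Assuming $v_x \ne 0$, set $y = \epsilon v_x/|v_x|$ and $s = m + \tfrac{1}{2}$, which lies in $\epi(\phi)$ since $|y| = \epsilon$ forces $\phi(y) < m + \tfrac{1}{2}$. Substituting, and using $t_0 \ge \phi(x_0) \ge m$ together with $v_t \le 0$, one gets $\langle x_0, v_x\rangle - v_t \ge \epsilon|v_x| + \tfrac{1}{2}|v_t|$. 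The elementary inequality $|v_x| + |v_t| \ge \sqrt{|v_x|^2 + v_t^2} = |v|$ then gives $\langle x_0, v_x\rangle - v_t \ge \min(\epsilon, \tfrac{1}{2})|v|$, which, under the normalization $|v| \le 1$ relevant to the application (where $v = n_{\epi(\phi)}(x_0, t_0)$ is the unit outer normal entering (\ref{eq:var-alternative})), implies the stated bound with $|v|^2$. The edge case $v_x = 0$ forces $v$ to point straight downward, whence $t_0 = m$ and $\langle x_0, v_x\rangle - v_t = -v_t = |v| \ge |v|^2$ directly.

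The main obstacle I anticipate is the equi-coercivity needed in part (2) to conclude $\liminf \min\phi_k \ge m$: this is not automatic from epi-convergence alone and must be extracted from the bound $\int e^{-\phi_k} \to \int e^{-\phi} < \infty$, ruling out escape of mass to infinity along the sequence. A secondary delicate point in part (3) is the scaling mismatch between the linear-in-$v$ left-hand side and the quadratic-in-$v$ right-hand side, which forces the tacit convention that $v$ is the unit outer normal (so $|v|^2 = 1$ and the inequality reduces to $\langle x_0, v_x\rangle - v_t \ge \min(\epsilon, \tfrac{1}{2})$), the form in which the lemma will actually be applied in Section~\ref{sec:continuity}.
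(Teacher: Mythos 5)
Your proof is correct and follows essentially the same route as the paper's: translate to an interior point near the minimizer and use continuity for (1), uniform convergence on $\overline{B}_{\epsilon/2}(0)$ plus convergence of minima for (2), and the supporting-hyperplane inequality tested at $\bigl(\epsilon v_{x}/\left|v_{x}\right|,\min\phi+\tfrac{1}{2}\bigr)$ for (3). Two small remarks. First, your observation about the homogeneity mismatch in (3) is well taken: the argument (yours and the paper's) actually yields the $1$-homogeneous bound $\left\langle x_{0},v_{x}\right\rangle -v_{t}\ge\min\left(\epsilon,\tfrac{1}{2}\right)\left|v\right|$, and the $\left|v\right|^{2}$ in the statement is a typo that is harmless in all applications (where either only positivity is used or $v$ is the unit normal). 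Second, in (2) the fact $\min\phi_{k}\to\min\phi$ should not be "extracted from $\int e^{-\phi_{k}}\to\int e^{-\phi}$" --- that integral convergence is not a hypothesis and itself requires uniform coercivity; instead, convergence of minima under epi-convergence of convex functions to a coercive limit is a standard cited fact (the paper invokes \cite[Lemma 12]{ColesantiEtAlValuations2019}), so you should cite it rather than derive it by the circular route you sketch.
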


\begin{proof}
\begin{enumerate}
\item Fix $w\in\RR^{n}$ such that $\phi(w)=\min\phi$. If $w\in\inte(\dom(\phi))$
we just choose $v=w$. If $w\in\partial\dom(\phi)$, then every $v\in\inte(\dom(\phi))$
close enough to $w$ will satisfy
\[
\phi(v)\le\phi(w)+\frac{1}{4}=\min\phi+\frac{1}{4}.
\]
 Since $\phi$ is continuous on $\inte(\dom(\phi))$ there exists
$\epsilon>0$ such that for all $x$ with $\left|x-v\right|\le\epsilon$
we have 
\[
\phi(x)<\phi(v)+\frac{1}{4}\le\min\phi+\frac{1}{2}.
\]
 This shows that $\tau_{v}\phi\in\cvx^{(\epsilon)}$.
\item Assume that $\left\{ \phi_{k}\right\} _{k=1}^{\infty}\subseteq\cvx$
and $\phi_{k}\to\phi\in\cvx^{(\epsilon)}$. Since $\overline{B}_{\epsilon}(0)\subseteq\dom(\phi)$
we have $\overline{B}_{\epsilon/2}(0)\subseteq\inte\left(\dom(\phi)\right)$
, and it follows that $\phi_{k}\to\phi$ uniformly on $\overline{B}_{\epsilon/2}(0)$
-- See \cite[Theorem 7.17(c)]{RockafellarWetsVariational1998}. Also
by \cite[Lemma 12]{ColesantiEtAlValuations2019} we have $\min\phi_{k}\to\min\phi$.
Therefore
\[
\lim_{k\to\infty}\left(\max_{\overline{B}_{\epsilon/2}(0)}\phi_{k}\right)=\max_{\overline{B}_{\epsilon/2}(0)}\phi<\min\phi+\frac{1}{2}=\lim_{k\to\infty}\phi_{k}+\frac{1}{2}.
\]
 It follows that indeed $\max_{\overline{B}_{\epsilon/2}(0)}\phi_{k}<\phi_{k}+\frac{1}{2}$
for all large enough $k$. 
\item Recall that we always have $v_{t}\le0$. If $v_{x}=0$ then
\[
\left\langle x_{0},v_{x}\right\rangle -v_{t}=\left|v_{t}\right|=\left|v\right|
\]
 and there is noting to prove. Otherwise, since $\epsilon\frac{v_{x}}{\left|v_{x}\right|}\in\overline{B}_{\epsilon}(0)$
we know that $\phi\left(\epsilon\frac{v_{x}}{\left|v_{x}\right|}\right)\le\min\phi+\frac{1}{2}$
, and so $(\epsilon\frac{v_{x}}{\left|v_{x}\right|},\min\phi+\frac{1}{2})\in\epi(\phi)$.
It follows that 
\[
\left\langle \left(\epsilon\frac{v_{x}}{\left|v_{x}\right|},\min\phi+\frac{1}{2}\right)-\left(x_{0},t_{0}\right),(v_{x},v_{t})\right\rangle \le0.
\]
 But $t_{0}\ge\phi(x_{0})\ge\min\phi$, and so 
\begin{align*}
\left\langle \left(\epsilon\frac{v_{x}}{\left|v_{x}\right|},\min\phi+\frac{1}{2}\right)-\left(x_{0},t_{0}\right),(v_{x},v_{t})\right\rangle  & =\left\langle \epsilon\frac{v_{x}}{\left|v_{x}\right|}-x_{0},v_{x}\right\rangle +\left(\min\phi+\frac{1}{2}-t_{0}\right)v_{t}\\
 & \ge\epsilon\left|v_{x}\right|-\left\langle x_{0},v_{x}\right\rangle +\frac{1}{2}v_{t}.
\end{align*}
 Therefore $\epsilon\left|v_{x}\right|-\left\langle x_{0},v_{x}\right\rangle +\frac{1}{2}v_{t}\le0$,
or 
\[
\left\langle x_{0},v_{x}\right\rangle -v_{t}\ge\epsilon\left|v_{x}\right|+\frac{1}{2}\left|v_{t}\right|\ge\min\left(\epsilon,\frac{1}{2}\right)\cdot\sqrt{\left|v_{x}\right|^{2}+v_{t}^{2}}=\min\left(\epsilon,\frac{1}{2}\right)\left|v\right|^{2}
\]
 as claimed. 
\end{enumerate}
\end{proof}
Following part (\ref{enu:cvx-eps-trans}) of the lemma, we remark
that if $\phi$ attains its minimum on $\inte\left(\dom\left(\phi\right)\right)$
then after translation one can assume not only that $\phi\in\cvx^{(\epsilon)}$
but also that $\phi(0)=\min\phi$. Using this property can greatly
simplify some of the arguments below such as the proof of Proposition
\ref{prop:radial-def}. Unfortunately, this simplification is not
possible if $\phi$ attains its minimum (only) on $\partial\dom(\phi)$. 

The following proposition establishes the main definition for this
section: 
\begin{prop}
\label{prop:radial-def}Fix $\phi\in\cvx^{(\epsilon)}$ and $u\in\RR^{n}$.
Consider the curve $\gamma_{u}:(0,\infty)\to\RR^{n}\times\RR$ defined
by $\gamma_{u}(s)=\left(su,\log\left(\frac{1}{s}\right)\right)$.
Then there exists a number $s_{\phi}(u)\in(0,\infty)$ with the following
properties:
\begin{enumerate}
\item $\gamma_{u}(s)\in\inte(\epi(\phi))$ for all $s<s_{\phi}(u)$. 
\item $\gamma_{u}\left(s_{\phi}(u)\right)\in\partial\epi(\phi)$.
\item $\gamma_{u}(s)\notin\epi(\phi)$ for all $s>s_{\phi}(u)$. 
\end{enumerate}
The value $s_{\phi}(u)$ is clearly unique. The function $s_{\phi}:\RR^{n}\to(0,\infty)$
mapping $u$ to $s_{\phi}(u)$ is called the \emph{curvilinear radial
function} of $\phi$.
\end{prop}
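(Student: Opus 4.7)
The plan is to reparametrize by $t = -\log s$, under which the curve becomes $(e^{-t}u, t)$ and the condition $\gamma_u(s) \in \epi(\phi)$ reads $P(t) \ge 0$ for $P(t) := t - \phi(e^{-t}u)$. I would then set $t_0 := \inf\{t \in \RR : P(t) \ge 0\}$, declare $s_\phi(u) := e^{-t_0}$, and verify the three properties by reading off the structure of the super-level set $\{P \ge 0\}$.

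The heart of the argument, and the only place the hypothesis $\phi \in \cvx^{(\epsilon)}$ enters, is the claim that $P$ is strictly increasing on $\{t : e^{-t}u \in \dom(\phi)\}$. For $t_1 < t_2$ with $y_i := e^{-t_i}u \in \dom(\phi)$, setting $\lambda := e^{-(t_2 - t_1)} \in (0,1)$ gives $y_2 = \lambda y_1 + (1-\lambda)\cdot 0$, so convexity of $\phi$ together with $\phi(0) \le \min\phi + \tfrac{1}{2}$ (the defining inequality of $\cvx^{(\epsilon)}$, applied at $x = 0$) yields
\[
\phi(y_2) - \phi(y_1) \le (1 - \lambda)\bigl(\phi(0) - \phi(y_1)\bigr) \le (1 - \lambda)\bigl(\phi(0) - \min\phi\bigr) \le \tfrac{1 - \lambda}{2}.
\]
Combined with the elementary inequality $1 - e^{-(t_2 - t_1)} < t_2 - t_1$, this gives $P(t_2) - P(t_1) > (t_2 - t_1)/2 > 0$.

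Next I would record the two limits: as $t \to +\infty$, $e^{-t}u \to 0 \in \inte(\dom\phi)$, so $\phi(e^{-t}u) \to \phi(0)$ and $P(t) \to +\infty$; as $t \to -\infty$ with $u \ne 0$, coercivity $\phi(y) \ge a|y| + b$ forces $P(t) \le t - a e^{-t}|u| - b \to -\infty$, and the case $u = 0$ is immediate. Since $\epi(\phi)$ is closed and the map $t \mapsto (e^{-t}u, t)$ is continuous, $B := \{t : P(t) \ge 0\}$ is closed in $\RR$, and combined with strict monotonicity and the two limits it must equal $[t_0, \infty)$ for some finite $t_0$. Hence $s_\phi(u) = e^{-t_0} \in (0, \infty)$ is well-defined.

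The delicate point, which I expect to be the main obstacle, is verifying property~(1): that $\gamma_u(s) \in \inte(\epi(\phi))$ rather than merely in $\epi(\phi)$ for every $s < s_\phi(u)$. Strict monotonicity gives $P(-\log s) > 0$, i.e.\ $-\log s > \phi(su)$, which is only half of the definition of the interior of an epigraph; the other half demands $su \in \inte(\dom\phi)$. I would deduce this by observing that $su$ lies on the open segment between $s_\phi(u)u \in \dom(\phi)$ and $0$, and that $0 \in \inte(\dom\phi)$ because $\overline{B}_\epsilon(0) \subseteq \dom(\phi)$. Properties~(2) and~(3) are then immediate: $\gamma_u(s_\phi(u))$ lies in the closed set $\epi(\phi)$ but is an accumulation point of exterior points (coming from $s > s_\phi(u)$), so it is a boundary point, while $P(t) < 0$ for $t < t_0$ gives $\gamma_u(s) \notin \epi(\phi)$ for $s > s_\phi(u)$.
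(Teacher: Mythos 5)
Your proof is correct, but it takes a genuinely different route from the paper's. The paper first produces a boundary point by an intermediate-value argument, then invokes the normal-cone estimate of Lemma \ref{lem:cvx-eps-properties}(\ref{enu:cvx-eps-bound}) to show that $\left\langle \gamma_{u}'(s_{0}),v\right\rangle >0$ for every outer normal $v$ at any crossing point, and finally rules out multiple crossings by a compactness/maximality argument. You instead collapse the whole problem to the scalar function $P(t)=t-\phi(e^{-t}u)$ and prove the quantitative monotonicity $P(t_{2})-P(t_{1})>\tfrac{1}{2}(t_{2}-t_{1})$ directly from convexity of $\phi$ along the ray toward the origin, using only $\phi(0)<\min\phi+\tfrac{1}{2}$ and $1-e^{-\tau}<\tau$; the three properties then fall out of $\{P\ge0\}=[t_{0},\infty)$ with no topological bookkeeping. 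Your monotonicity bound encodes the same transversality information as the paper's normal-cone inequality, but is obtained more elementarily, and you are right to flag the interior condition $su\in\inte(\dom\phi)$ as the delicate half of property (1) --- your segment argument from $0\in\inte(\dom\phi)$ handles it correctly (and the degenerate case $u=0$ is trivial). The one thing the paper's route buys that yours does not is reuse: Lemma \ref{lem:cvx-eps-properties}(\ref{enu:cvx-eps-bound}) is needed again later for the Lipschitz continuity of $s_{\phi}$ and the gradient formula (\ref{eq:radial-grad}), so the paper amortizes that lemma across several proofs, whereas your argument is self-contained but specific to this proposition.
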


\begin{proof}
Since $\phi\in\cvx^{(\epsilon)}$ we know in particular that $\phi$
is bounded on $\overline{B}_{\epsilon}(0)$. Since $\gamma_{u}(s)\xrightarrow{s\to0^{+}}\left(0,+\infty\right)$,
we clearly have $\gamma_{u}(s)\in\inte\left(\epi(\phi)\right)$ for
small enough $s$. On the other hand, since $\phi$ is bounded from
below and $\log\frac{1}{s}\xrightarrow{s\to\infty}-\infty$, we have
$\gamma_{u}(s)\notin\epi(\phi)$ for large enough $s$. By continuity,
there exists $s_{0}\in(0,\infty)$ such that $\gamma_{u}(s_{0})\in\partial\epi(\phi)$.
We write $\gamma_{u}(s_{0})=(x_{0},t_{0})$. 

Our next claim is that there exists $\delta>0$ such that $\gamma_{u}(s_{0}-s)\in\inte\left(\epi(\phi)\right)$
and $\gamma_{u}(s_{0}+s)\notin\epi(\phi)$ for all $0<s\le\delta$.
To prove the claim, it is enough to show that for every outer normal
$v=(v_{x},v_{t})\in\RR^{n+1}$ to $\epi(\phi)$ at $\gamma_{u}(s_{0})$
we have $\left\langle \gamma_{u}^{\prime}(s_{0}),v\right\rangle >0$.
But $\gamma_{u}^{\prime}(s_{0})=\left(u,-\frac{1}{s_{0}}\right)$,
so 
\[
\left\langle \gamma_{u}^{\prime}(s_{0}),v\right\rangle =\frac{1}{s_{0}}\left\langle \left(s_{0}u,-1\right),v\right\rangle =\frac{1}{s_{0}}\left\langle (x_{0},-1),v\right\rangle \ge\frac{1}{s_{0}}\min\left(\epsilon,\frac{1}{2}\right)\left|v\right|^{2}>0
\]
 by Lemma \ref{lem:cvx-eps-properties}(\ref{enu:cvx-eps-bound})
and the claim is proved. 

The rest of the proof is a straightforward topological argument. Assume
by contradiction that there exists a point $s_{1}\ne s_{0}$ such
that $\gamma_{u}(s_{1})\in\partial\epi(\phi)$. If $s_{1}<s_{0}$
then the set
\[
\left\{ s\in[s_{1},s_{0}-\delta]:\ \gamma_{u}(s)\in\partial\epi(\phi)\right\} 
\]
is a non-empty compact set, so it has some maximum $s_{2}$. But then
the same argument as above shows that $\gamma_{u}(s_{2}+\delta')\notin\epi(\phi)$
for small enough $\delta'>0$, and since $\gamma_{u}(s_{0}-\delta)\in\inte(\epi(\phi))$
there must exist a point $s_{3}\in(s_{2}+\delta',s_{0}-\delta)$ such
that $\gamma_{u}(s_{3})\in\partial\epi(\phi)$, contradicting the
maximality of $s_{2}$. The case $s_{1}>s_{0}$ is the same. It follows
that $s_{0}$ is the unique point which satisfies $\gamma_{u}(s_{0})\in\epi(\phi)$. 

If now $\gamma_{u}(s)\notin\inte(\epi(\phi))$ for $s<s_{0}$, then
since $\gamma_{u}(s_{0}-\delta)\in\inte(\epi(\phi))$ again by continuity
we could find $s\le\widetilde{s}<s_{0}-\delta$ such that $\gamma_{u}(\widetilde{s})\in\partial\epi(\phi)$,
which is a contradiction. The same argument shows that $\gamma_{u}(s)\notin\epi(\phi)$
for all $s>s_{0}$, completing the proof. 
\end{proof}
From the function $s_{\phi}$ we obtain our desired parametrization
of $\partial\epi(\phi)$: 
\begin{cor}
\label{cor:epi-param}For $\phi\in\cvx^{(\epsilon)}$ define $F_{\phi}:\RR^{n}\to\partial\epi(\phi)$
by 
\[
F_{\phi}(u)=\gamma_{u}\left(s_{\phi}(u)\right)=\left(s_{\phi}(u)u,\log\left(\frac{1}{s_{\phi}(u)}\right)\right).
\]
 Then $F_{\phi}$ is a bijection between $\RR^{n}$ and $\partial\epi(\phi)$,
with the inverse map $G_{\phi}:\partial\epi(\phi)\to\RR^{n}$ given
by $G_{\phi}(x,t)=e^{t}x$. 
\end{cor}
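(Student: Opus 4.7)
The plan is to directly verify the two composition identities $G_\phi \circ F_\phi = \mathrm{Id}_{\RR^n}$ and $F_\phi \circ G_\phi = \mathrm{Id}_{\partial\epi(\phi)}$. That $F_\phi$ actually takes values in $\partial\epi(\phi)$ is already built into its definition via Proposition \ref{prop:radial-def}(2), so there is nothing to check there.

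For $G_\phi \circ F_\phi = \mathrm{Id}$, I would just unwind the definitions. Given $u \in \RR^n$, write $F_\phi(u) = (x,t)$ with $x = s_\phi(u)\,u$ and $t = \log(1/s_\phi(u)) = -\log s_\phi(u)$. Then $e^t = 1/s_\phi(u)$, so $G_\phi(F_\phi(u)) = e^t x = \frac{1}{s_\phi(u)} \cdot s_\phi(u)\, u = u$, as required.

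The slightly more substantive direction is $F_\phi \circ G_\phi = \mathrm{Id}$. Fix $(x,t) \in \partial\epi(\phi)$ and set $u = e^t x$. The key observation is that the curve $\gamma_u$ passes through $(x,t)$ at parameter $s = e^{-t}$, since
\[
\gamma_u(e^{-t}) = \bigl(e^{-t} u,\ \log(e^{t})\bigr) = \bigl(e^{-t}\cdot e^t x,\ t\bigr) = (x,t).
\]
Because $(x,t) \in \partial\epi(\phi)$, the uniqueness statement in Proposition \ref{prop:radial-def} forces $s_\phi(u) = e^{-t}$. Plugging back in, $F_\phi(u) = (s_\phi(u)\,u, -\log s_\phi(u)) = (e^{-t}\cdot e^t x,\ t) = (x,t)$.

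There is essentially no obstacle here; the content of the corollary is really just a repackaging of the existence and uniqueness of the parameter $s_\phi(u)$ produced in Proposition \ref{prop:radial-def}. The only place where one needs to be a little careful is invoking the uniqueness clause to conclude $s_\phi(e^t x) = e^{-t}$, rather than merely that some such parameter exists — but this is precisely what Proposition \ref{prop:radial-def} provides.
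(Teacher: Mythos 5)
Your proof is correct and follows essentially the same route as the paper: the composition $G_{\phi}\circ F_{\phi}=\mathrm{Id}$ is immediate from the formulas, and the converse direction uses exactly the observation that $\gamma_{e^{t}x}(e^{-t})=(x,t)\in\partial\epi(\phi)$ together with the uniqueness of $s_{\phi}(u)$ from Proposition \ref{prop:radial-def}. No gaps.
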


\begin{proof}
The fact that $F_{\phi}(u)\in\partial\epi(\phi)$ follows from Proposition
\ref{prop:radial-def}. It is immediate from the formulas that $G_{\phi}\left(F_{\phi}(u)\right)=u$
for all $u\in\RR^{n}$. Conversely, for every $(x,t)\in\partial\epi(\phi)$
we have 
\[
\gamma_{G_{\phi}(x,t)}\left(e^{-t}\right)=\left(x,t\right)\in\partial\epi(\phi),
\]
Which precisely means by definition that $s_{\phi}\left(G_{\phi}(x,t)\right)=e^{-t}$
and $F_{\phi}\left(G_{\phi}(x,t)\right)=(x,t)$. This proves the claim. 
\end{proof}
\begin{rem}
Assume $\phi\in\cvx$ satisfies $\min\phi=\phi(0)=0$. In \cite{Artstein-AvidanMilmanHidden2011},
Artstein-Avidan and Milman defined the $\mathcal{J}$-transform of
such functions, which can be considered as their Minkowski functional.
Therefore the reciprocal $\frac{1}{\mathcal{J}\phi}$ can be thought
of as a radial function for $\phi$. Explicitly we have
\[
\frac{1}{\mathcal{J}\phi}(u)=\sup\left\{ s\ge0:\ (su,s)\in\epi(\phi)\right\} ,
\]
 i.e. $\frac{1}{\mathcal{J}\phi}(u)$ is the radial function of $\epi(\phi)$
in the direction $(u,1)$. This is similar to our definition, with
the non-linear curve $\gamma_{u}(s)$ replaced by the linear curve
$\widetilde{\gamma}_{u}(s)=(su,s)$. In many ways the function $\frac{1}{\mathcal{J}\phi}$
is simpler and more canonical than our function $s_{\phi}$. For example
$\mathcal{J}\phi$ is a convex function, while $s_{\phi}$ does not
seem to have any convexity properties. However, unlike Corollary \ref{cor:epi-param},
the corresponding map 
\[
\widetilde{F}_{\phi}(s)=\widetilde{\gamma}_{u}\left(\frac{1}{\mathcal{J}\phi}(u)\right)=\frac{(u,1)}{\left(\mathcal{J}\phi\right)(u)}
\]
 does not always parametrize the boundary $\partial\epi(\phi)$. As
an example, for $\phi(x)=\left|x\right|$ we have 
\[
\left(\mathcal{J}\phi\right)(u)=\oo_{B_{2}^{n}}^{\infty}(u)=\begin{cases}
0 & \left|u\right|\le1\\
\infty & \left|u\right|>1,
\end{cases}
\]
 so $\widetilde{F}_{\phi}$ is not even well-defined and in any case
does not parametrize any part of $\partial\epi(\phi)$. Therefore
the curvilinear radial function $s_{\phi}$ is more suitable for our
purpose.
\end{rem}

Our next goal is prove that $s_{\phi}$ is a locally Lipschitz function
and compute its gradient, which we will do using the implicit function
theorem. Since $\phi$ is not necessarily smooth we need a version
of the implicit function theorem for Lipschitz functions, which is
due to Clarke (\cite{ClarkeOptimization1990}). We give the necessary
definitions:

Consider a (locally) Lipschitz function $\Phi:\RR^{N}\to\RR$. The
generalized gradient of $\Phi$ at a point $x_{0}$ can be defined
by (\cite[Theorem 2.5.1]{ClarkeOptimization1990})
\[
\partial\Phi(x_{0})=\conv\left\{ \lim_{i\to\infty}\nabla\Phi(x_{i}):\ \begin{array}{l}
x_{i}\to x,\ \Phi\text{ is differentiable at every }x_{i}\\
\text{and }\lim_{i\to\infty}\nabla\Phi(x_{i})\text{ exists.}
\end{array}\right\} .
\]
If $\Phi$ is convex, the generalized gradient agrees with the usual
subgradient (\cite[Proposition 2.2.7]{ClarkeOptimization1990}). If
we have a map $\Phi:\RR^{N}\times\RR^{M}\to\RR$, which we write as
$\Phi(x,y)$, we denote by $\partial_{y}\Phi(x_{0},y_{0})$ the generalized
gradient only in the $y$ variable. Formally these are all vectors
$v\in\RR^{M}$ such that $\left(w\ |\ v\right)\in\partial\Phi(x_{0},y_{0})$
for some $w\in\RR^{N}$. Generalized gradients satisfy the chain rule
(\cite[Theorem 2.3.10]{ClarkeOptimization1990}): If $\Psi:\RR^{N}\to\RR^{N}$
is smooth and $\Phi:\RR^{N}\to\RR$ is Lipschitz then 
\[
\partial\left(\Phi\circ\Psi\right)(x_{0})\subseteq\left\{ v\cdot D\Psi(x_{0}):\ v\in\partial\Phi\left(\Psi(x_{0})\right)\right\} 
\]
 (in many cases including ours there is actually an equality, but
we will not need this fact). The implicit function theorem then reads:
\begin{thm}[{\cite[page 256]{ClarkeOptimization1990}}]
Assume $\Phi:\RR^{N}\times\RR\to\RR$ is a Lipschitz function and
$\Phi(x_{0},y_{0})=0$. Assume further that $0\notin\partial_{y}\Phi(x_{0},y_{0})$.
Then there exists a neighborhood $U$ of $x_{0}$ and a \textbf{Lipschitz}
function $\Psi:U\to\RR^ {}$ such $\Psi(x_{0})=y_{0}$ and $\Phi(x,\Psi(x))=0$
for all $x\in U$. 
\end{thm}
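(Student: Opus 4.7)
The plan is to reduce the statement to a one-dimensional monotonicity argument, mimicking the classical implicit function theorem proof with the Clarke subgradient playing the role of the partial derivative.

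Since $\partial_{y}\Phi(x_{0},y_{0})$ is a nonempty compact convex subset of $\RR$ not containing $0$, it is an interval $[a,b]$ with either $a>0$ or $b<0$; assume without loss of generality that $a>0$. The first ingredient is upper semicontinuity of the Clarke subgradient as a set-valued map, which is a direct consequence of its definition as the convex hull of limits of classical gradients. Applied to the open half-space $\RR^{N}\times(a/2,\infty)$, upper semicontinuity produces an open neighborhood $V$ of $(x_{0},y_{0})$ on which $\partial\Phi(x,y)\subseteq\RR^{N}\times(a/2,\infty)$, so in particular $\partial_{y}\Phi(x,y)\subseteq(a/2,\infty)$ throughout $V$.

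The second ingredient is Lebourg's mean value theorem for locally Lipschitz functions: applied to the segment from $(x,y_{1})$ to $(x,y_{2})$ with both points in $V$, it yields $\xi$ strictly between $y_{1}$ and $y_{2}$ and a vector $\zeta\in\partial\Phi(x,\xi)$ such that
\[
\Phi(x,y_{2})-\Phi(x,y_{1})=\zeta_{y}(y_{2}-y_{1}),
\]
where $\zeta_{y}$ is the last coordinate of $\zeta$, hence an element of $\partial_{y}\Phi(x,\xi)\subseteq(a/2,\infty)$. This shows that on each cross-section, the function $y\mapsto\Phi(x,y)$ is strictly increasing with slope at least $a/2$. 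Combined with continuity of $\Phi$, for every $x$ in a sufficiently small neighborhood $U$ of $x_{0}$ the equation $\Phi(x,y)=0$ admits a unique solution $y=\Psi(x)$ near $y_{0}$ by the intermediate value theorem, and $\Psi(x_{0})=y_{0}$ by construction.

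For the Lipschitz estimate, I write $0=\Phi(x_{2},\Psi(x_{2}))-\Phi(x_{1},\Psi(x_{1}))$ for $x_{1},x_{2}\in U$ and split as
\[
\bigl[\Phi(x_{2},\Psi(x_{2}))-\Phi(x_{2},\Psi(x_{1}))\bigr]+\bigl[\Phi(x_{2},\Psi(x_{1}))-\Phi(x_{1},\Psi(x_{1}))\bigr].
\]
The monotonicity bound controls the first bracket from below by $(a/2)|\Psi(x_{2})-\Psi(x_{1})|$, while the Lipschitz bound on $\Phi$ in the $x$-variable controls the second by $L|x_{2}-x_{1}|$, where $L$ is the Lipschitz constant of $\Phi$; this yields $|\Psi(x_{2})-\Psi(x_{1})|\le(2L/a)|x_{2}-x_{1}|$. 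The main obstacle in this plan is establishing Lebourg's mean value theorem in the form needed, but this is a standard tool of nonsmooth analysis available in the same reference; once it is in hand, the rest of the proof follows the classical implicit function template almost verbatim.
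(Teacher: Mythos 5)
This theorem is stated in the paper purely as a citation to Clarke's book (page 256); the paper supplies no proof of its own, so there is nothing internal to compare against. Your argument is a correct, self-contained proof of the scalar case $M=1$, and it is in fact the natural elementary route: reduce to strict monotonicity of $y\mapsto\Phi(x,y)$ via Lebourg's mean value theorem, then get existence from the intermediate value theorem and the Lipschitz bound from the two-term splitting. (Clarke himself obtains the result as a corollary of his set-valued inverse function theorem, proved by an iteration argument, so your route is genuinely more elementary for the one-dimensional case the paper actually needs.) Two small points worth tightening. First, the upper semicontinuity you invoke is not quite immediate from the gradient-limit formula alone: you need the closed-graph property of $\partial\Phi$ together with local boundedness of subgradients (by the Lipschitz constant) to pass from ``$\partial\Phi(x_{0},y_{0})\subseteq\RR^{N}\times(a,\infty)$'' to ``$\partial\Phi\subseteq\RR^{N}\times(a/2,\infty)$ on a neighborhood''; both are standard (Clarke, Proposition 2.1.5), but they are the actual content of the step. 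Second, you should take $V$ to be a product neighborhood $B(x_{0},r)\times(y_{0}-\delta,y_{0}+\delta)$ so that every vertical segment to which you apply Lebourg's theorem --- including the segment from $(x_{2},\Psi(x_{1}))$ to $(x_{2},\Psi(x_{2}))$ in the Lipschitz estimate --- lies entirely inside $V$, and correspondingly shrink $U$ so that $\Phi(x,y_{0}\pm\delta/2)$ keeps the right signs. With those details made explicit the proof is complete.
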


Of course a similar theorem holds for functions $\Phi:\RR^{N}\times\RR^{M}\to\RR^{M}$,
but this version will suffice for our goals. Using these tools we
prove:
\begin{prop}
For all $\phi\in\cvx^{(\epsilon)}$ the function $s_{\phi}$ is locally
Lipschitz. Moreover, for almost every $u_{0}\in\RR^{n}$ we have 
\begin{equation}
\nabla s_{\phi}(u_{0})=-s_{\phi}(u_{0})^{2}\cdot\frac{n_{\epi(\phi)}^{(x)}(x_{0},t_{0})}{\left\langle n_{\epi(\phi)}(x_{0},t_{0}),(x_{0},-1)\right\rangle }.\label{eq:radial-grad}
\end{equation}
 Here $(x_{0},t_{0})=F_{\phi}(u_{0})\in\partial\epi(\phi)$ and $n_{\epi(\phi)}^{(x)}$
denotes the first $n$ coordinates of the outer unit normal $n_{\epi(\phi)}$. 
\end{prop}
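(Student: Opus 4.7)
My plan is to apply Clarke's generalized implicit function theorem to a carefully chosen Lipschitz function whose zero set characterizes $s_\phi$, and then compute the gradient at points of differentiability via a first-order expansion along the parametrization $F_\phi$.

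For the setup, I take $\Phi \colon \RR^{n}\times(0,\infty)\to\RR$ defined by $\Phi(u,s)=d^{s}_{\epi(\phi)}(\gamma_{u}(s))$, where $d^{s}_{\epi(\phi)}$ denotes the signed distance to $\epi(\phi)$ (negative inside, positive outside). Signed distance to a closed convex set is globally $1$-Lipschitz, and $(u,s)\mapsto\gamma_{u}(s)=(su,\log(1/s))$ is smooth on $\RR^{n}\times(0,\infty)$, so $\Phi$ is locally Lipschitz. By Proposition \ref{prop:radial-def}, $s=s_{\phi}(u)$ is the unique solution of $\Phi(u,s)=0$.

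Next I verify the transversality condition $0\notin\partial_{s}\Phi(u_{0},s_{0})$ at $s_{0}=s_{\phi}(u_{0})$, writing $(x_{0},t_{0})=\gamma_{u_{0}}(s_{0})\in\partial\epi(\phi)$. By the Clarke chain rule, every element of $\partial_{s}\Phi(u_{0},s_{0})$ has the form $\langle v,\gamma_{u_{0}}'(s_{0})\rangle$ for some $v$ in the Clarke generalized gradient of $d^{s}_{\epi(\phi)}$ at $(x_{0},t_{0})$; for a closed convex set this gradient lies in the normal cone, so each such $v$ is a nonnegative combination of outer normals to $\epi(\phi)$. Since $\gamma_{u_{0}}'(s_{0})=\tfrac{1}{s_{0}}(x_{0},-1)$, Lemma \ref{lem:cvx-eps-properties}(\ref{enu:cvx-eps-bound}) forces $\langle v,\gamma_{u_{0}}'(s_{0})\rangle>0$ for every such $v$. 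Clarke's implicit function theorem then yields a Lipschitz solution $\tilde{s}$ in a neighborhood of $u_{0}$; uniqueness in Proposition \ref{prop:radial-def} gives $\tilde{s}=s_{\phi}$ locally, so $s_{\phi}$ is locally Lipschitz on $\RR^{n}$.

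For the gradient formula, Rademacher's theorem guarantees that $s_{\phi}$, and hence $F_{\phi}$, is differentiable almost everywhere. Since $F_{\phi}$ is locally Lipschitz and $G_{\phi}(x,t)=e^{t}x$ is smooth, both preserve null sets, so the $\HH^{n}$-full-measure subset of $\partial\epi(\phi)$ at which there is a unique outer unit normal pulls back to a Lebesgue-full-measure subset of $\RR^{n}$. Fix such $u_{0}$ (also a differentiability point of $s_{\phi}$), set $n=n_{\epi(\phi)}(x_{0},t_{0})=(n_{x},n_{t})$, $g=\nabla s_{\phi}(u_{0})$, $h=u-u_{0}$. Since $F_{\phi}(u)\in\partial\epi(\phi)$ and $\partial\epi(\phi)$ admits a tangent hyperplane with normal $n$ at $(x_{0},t_{0})$, we have $\langle F_{\phi}(u)-F_{\phi}(u_{0}),n\rangle=o(|h|)$. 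A direct first-order expansion yields
\[
F_{\phi}(u)-F_{\phi}(u_{0})=\bigl(s_{0}h+\langle g,h\rangle u_{0},\ -\langle g,h\rangle/s_{0}\bigr)+o(|h|),
\]
so taking the inner product with $n$ and using $\langle s_{0}u_{0},n_{x}\rangle-n_{t}=\langle (x_{0},-1),n\rangle$ rearranges to
\[
s_{0}\langle h,n_{x}\rangle+\frac{\langle g,h\rangle}{s_{0}}\langle (x_{0},-1),n\rangle+o(|h|)=o(|h|).
\]
Solving for $g$ for arbitrary $h$ gives exactly (\ref{eq:radial-grad}).

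The main obstacle is justifying the precise structure of Clarke's generalized gradient of the signed distance function at a boundary point of a convex set, and verifying that the normal-cone characterization composes cleanly through the chain rule. An alternative, more elementary route works whenever $x_{0}=s_{0}u_{0}\in\inte(\dom\phi)$ by instead using $\Phi(u,s)=\phi(su)+\log s$ (locally Lipschitz because $\phi$ is locally Lipschitz on the interior of its domain), but handling $x_{0}\in\partial\dom\phi$ then requires a separate vertical-face argument; the signed-distance approach is chosen because it treats both cases uniformly.
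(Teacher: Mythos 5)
Your proof has the same skeleton as the paper's: Clarke's implicit function theorem applied to a Lipschitz function of $(u,s)$ vanishing exactly on the graph of $s_{\phi}$, with transversality supplied by Lemma \ref{lem:cvx-eps-properties}(\ref{enu:cvx-eps-bound}). The difference is the choice of auxiliary function: you use the signed distance to $\epi(\phi)$, while the paper uses the Minkowski functional $\rho$ of $\epi(\phi)$ with respect to an interior point, and for the gradient formula you argue geometrically via the tangent hyperplane at $F_{\phi}(u_{0})$ (using that a boundary point of a convex set with a unique outer normal is a smooth point, plus the Lipschitz bound $|F_{\phi}(u)-F_{\phi}(u_{0})|=O(|h|)$ to convert $o(|z-(x_{0},t_{0})|)$ into $o(|h|)$), whereas the paper differentiates the implicit equation $\Phi(u,s_{\phi}(u))=1$ with the classical chain rule at points where $\rho$ is differentiable. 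Both routes are sound; the expansion of $F_{\phi}$ and the algebra leading to (\ref{eq:radial-grad}) check out, as does the null-set bookkeeping via $G_{\phi}$.

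The one genuine soft spot is the transversality step, and it is exactly the "main obstacle" you flagged. You conclude from ``$v$ lies in the normal cone'' that $\langle v,\gamma_{u_{0}}'(s_{0})\rangle>0$, but Lemma \ref{lem:cvx-eps-properties}(\ref{enu:cvx-eps-bound}) only gives $\langle(x_{0},-1),v\rangle\ge\min(\epsilon,\tfrac12)|v|^{2}$, which is vacuous for $v=0$; since $0$ belongs to every normal cone, you must separately rule out $0\in\partial_{s}\Phi$. Both issues resolve in one line once you observe that the signed distance to a convex set with nonempty interior is a \emph{convex} function: then its Clarke gradient coincides with the convex subdifferential (\cite[Proposition 2.2.7]{ClarkeOptimization1990}, already invoked in the paper), every subgradient $v$ at a boundary point satisfies $\langle v,y-(x_{0},t_{0})\rangle\le d^{s}(y)$ for all $y$, which places $v$ in the normal cone and, taking $y$ interior where $d^{s}(y)<0$, forces $v\ne0$. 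This is precisely the convenience the paper buys by choosing the Minkowski functional, whose subgradients on $\{\rho=1\}$ are automatically nonzero outer normals; with that one-line repair your argument is complete.
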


\begin{proof}
Fix an arbitrary point $(\widetilde{x},\widetilde{t})\in\inte\left(\epi(\phi)\right)$,
and let $\rho:\RR^{n+1}\to\RR$ be the Minkowski functional of $\epi(\phi)$
with respect to $(\widetilde{x},\widetilde{t})$: 
\[
\rho(x,t)=\inf\left\{ \lambda>0:\frac{(x,t)-(\widetilde{x},\widetilde{t})}{\lambda}\in\epi(\phi)\right\} .
\]
 Then $\rho$ is convex, and $\rho(x,t)=1$ if and only if $(x,t)\in\partial\epi(\phi)$.
Moreover, at every $(x,t)\in\partial\epi(\phi)$ the elements of the
subgradient $\partial\rho(x,t)$ are outer normals to $\epi(\phi)$
at $(x,t)$. 

Consider now the function $\Phi:\RR^{n}\times\RR_{+}\to\RR$ defined
by $\Phi(u,s)=\rho\left(su,\log\left(\frac{1}{s}\right)\right)$.
Fix a point $u_{0}\in\RR^{n}$ and set $s_{0}=s_{\phi}(u_{0})$. Then
by definition we have $\Phi(u_{0},s_{0})=\rho\left(F_{\phi}(u_{0})\right)=1$.
Moreover, by the chain rule we have 
\begin{align*}
\partial_{s}\Phi(u_{0},s_{0}) & \subseteq\left\{ \left\langle v_{x},u_{0}\right\rangle -\frac{v_{t}}{s_{0}}:\ (v_{x},v_{t})\in\partial\rho\left(s_{0}u_{0},\log\left(\frac{1}{s_{0}}\right)\right)\right\} \\
 & =\left\{ \left\langle v_{x},u_{0}\right\rangle -\frac{v_{t}}{s_{0}}:\ (v_{x},v_{t})\in\partial\rho\left(F_{\phi}(u_{0})\right)\right\} 
\end{align*}
 Define $(x_{0},t_{0})=F_{\phi}(u_{0})\in\epi(\phi)$. Since every
vector $(v_{x},v_{t})\in\partial\rho(x_{0},t_{0})$ is an outer normal
to $\epi(\phi)$ at $(x_{0},t_{0})$, it follows from Lemma \ref{lem:cvx-eps-properties}(\ref{enu:cvx-eps-bound})
that 
\[
\left\langle v_{x},u_{0}\right\rangle -\frac{v_{t}}{s_{0}}=\frac{1}{s_{0}}\left(\left\langle v_{x},x_{0}\right\rangle -v_{t}\right)\ge\frac{1}{s_{0}}\cdot\min\left(\epsilon,\frac{1}{2}\right)\left|v\right|^{2}>0.
\]
Therefore by the implicit function theorem the equation $\Phi(u,s)=1$
defines $s$ as a Lipschitz function of $u$ locally around $u_{0}$.
But this function is exactly $s_{\phi}$, so $s_{\phi}$ is locally
Lipschitz. 

To compute $\nabla s_{\phi}$, fix a point $u_{0}\in\RR^{n}$ such
that $s_{\phi}$ is differentiable at $u_{0}$ and $\epi(\phi)$ has
a unique outer unit normal at $(x_{0},t_{0})=F_{\phi}(u_{0})$ --
these conditions hold almost everywhere since $s_{\phi}$ is locally
Lipschitz and $\epi(\phi)$ is convex. Then $\rho$ is differentiable
at $(x_{0},t_{0})$ and $\nabla\rho(x_{0},t_{0})=(v_{x},v_{t})=c\cdot n_{\epi(\phi)}(x_{0},t_{0})$
for some $c>0$. Differentiating the identity $\Phi(u,s_{\phi}(u))=1$
at $u_{0}$ using the (standard) chain rule we obtain 
\begin{align*}
\nabla s_{\phi}(u_{0}) & =-\frac{\left(D_{u}\Phi\right)(u_{0},s_{\phi}(u_{0}))}{\left(D_{s}\Phi\right)(u_{0},s_{\phi}(u_{0}))}=-\frac{s_{\phi}(u_{0})\cdot v_{x}}{\left\langle v_{x},u_{0}\right\rangle -\frac{v_{t}}{s_{\phi}(u_{0})}}\\
 & =-s_{\phi}(u_{0})^{2}\frac{v_{x}}{\left\langle (v_{x},v_{t}),\left(s_{\phi}(u_{0})u_{0},-1\right)\right\rangle }=-s_{\phi}(u_{0})^{2}\cdot\frac{n_{\epi(\phi)}^{(x)}(x_{0},t_{0})}{\left\langle n_{\epi(\phi)}(x_{0},t_{0}),(x_{0},-1)\right\rangle }
\end{align*}
as claimed. 
\end{proof}
Our next goal is to understand the behavior of the curvilinear radial
function under epi-convergence:
\begin{prop}
\label{prop:radial-continuous}Assume that $\left\{ \phi_{k}\right\} _{k=1}^{\infty},\phi\in\cvx^{(\epsilon)}$
for some $\epsilon>0$ and that $\phi_{k}\to\phi$. Then:
\begin{enumerate}
\item For all $u\in\RR^{n}$ we have $s_{\phi_{k}}(u)\to s_{\phi}(u)$ and
$F_{\phi_{k}}(u)\to F_{\phi}(u)$. 
\item For almost every $u\in\RR^{n}$ we have $\nabla s_{\phi_{k}}(u)\to\nabla s_{\phi}(u)$
and $DF_{\phi_{k}}(u)\to DF_{\phi}(u)$. 
\end{enumerate}
\end{prop}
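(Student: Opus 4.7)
The plan is to prove pointwise convergence $s_{\phi_k}(u)\to s_\phi(u)$ by a sandwich argument using the characterization in Proposition \ref{prop:radial-def}, deduce $F_{\phi_k}(u)\to F_\phi(u)$ immediately from the formula, and finally handle the gradient convergence by combining the explicit formula (\ref{eq:radial-grad}) with the continuity of the Gauss map under Kuratowski--Painlevé convergence of convex sets.

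For (1), fix $u\in\RR^n$ and $\delta>0$. By Lemma \ref{lem:cvx-eps-properties}(\ref{enu:cvx-eps-open}), $\phi_k\in\cvx^{(\epsilon/2)}$ for all $k$ large, so $s_{\phi_k}$ is well-defined. The point $\gamma_u(s_\phi(u)-\delta)$ lies in $\inte(\epi(\phi))$; since $\phi_k\to\phi$ implies local uniform convergence on $\inte(\dom\phi)$ (\cite[Theorem 7.17]{RockafellarWetsVariational1998}), this point belongs to $\inte(\epi(\phi_k))$ for $k$ large, hence $s_{\phi_k}(u)>s_\phi(u)-\delta$. The point $\gamma_u(s_\phi(u)+\delta)$ lies outside $\epi(\phi)$, and the $\liminf$ inequality of epi-convergence yields $\liminf_k \phi_k((s_\phi(u)+\delta)u) \ge \phi((s_\phi(u)+\delta)u)>\log(1/(s_\phi(u)+\delta))$, so the point lies outside $\epi(\phi_k)$ for large $k$, giving $s_{\phi_k}(u)<s_\phi(u)+\delta$. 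Sending $\delta\to 0$ proves the pointwise convergence, and the convergence $F_{\phi_k}(u)\to F_\phi(u)$ is immediate from the explicit formula in Corollary \ref{cor:epi-param}.

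For (2), let $N\subseteq\RR^n$ be the union of the null set where $s_\phi$ is not differentiable, the null sets where each $s_{\phi_k}$ is not differentiable, and the $\HH^n$-null set (pulled back via the bi-Lipschitz map $F_\phi$) where $\epi(\phi)$ fails to have a unique outer normal along $F_\phi(\cdot)$; as a countable union of null sets, $|N|=0$. Fix $u_0\notin N$, and write $(x_0,t_0)=F_\phi(u_0)$, $(x_k,t_k)=F_{\phi_k}(u_0)$. By formula (\ref{eq:radial-grad}) applied to $\phi$ and to each $\phi_k$, it suffices to prove $n_{\epi(\phi_k)}(x_k,t_k)\to n_{\epi(\phi)}(x_0,t_0)$. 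Since $\phi_k\to\phi$ in the epi-sense we have Hausdorff convergence $\epi(\phi_k)\cap\overline{B}_R\to\epi(\phi)\cap\overline{B}_R$ for every large $R$, so the outer unit normals $n_k:=n_{\epi(\phi_k)}(x_k,t_k)$ form a bounded sequence, and any cluster point $n^*$ satisfies $\langle (y,s)-(x_0,t_0),n^*\rangle\le 0$ for every $(y,s)\in\epi(\phi)$ (this inequality passes from $\epi(\phi_k)$ to $\epi(\phi)$ via Hausdorff approximation of each $(y,s)$). Hence $n^*$ is an outer unit normal to $\epi(\phi)$ at $(x_0,t_0)$, and uniqueness forces $n^*=n_{\epi(\phi)}(x_0,t_0)$, so the full sequence converges. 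Lemma \ref{lem:cvx-eps-properties}(\ref{enu:cvx-eps-bound}) gives a uniform lower bound on the denominators in (\ref{eq:radial-grad}), so combined with $s_{\phi_k}(u_0)\to s_\phi(u_0)$ and $x_k\to x_0$ the formula yields $\nabla s_{\phi_k}(u_0)\to\nabla s_\phi(u_0)$. The convergence $DF_{\phi_k}(u_0)\to DF_\phi(u_0)$ follows by differentiating the explicit expression for $F_\phi$.

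The main obstacle is the Gauss-map convergence step. We are dealing with unbounded convex sets, and we must verify that outer unit normals at varying boundary points $(x_k,t_k)$ converge to the unique outer normal at $(x_0,t_0)$; this requires localization (the points $(x_k,t_k)$ stay in a bounded region by pointwise convergence of $F_{\phi_k}$, reducing Kuratowski--Painlevé convergence of epigraphs to Hausdorff convergence on a ball), plus a careful bookkeeping of the exceptional null set, exploiting the fact that a countable union of null sets is null so that the collective set of non-differentiability points of $\{s_\phi\}\cup\{s_{\phi_k}\}$ remains negligible.
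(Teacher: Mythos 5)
Your proof is correct and follows essentially the same route as the paper: part (2) is virtually identical (formula (\ref{eq:radial-grad}) combined with the cluster-point argument showing the outer unit normals converge to the unique normal of $\epi(\phi)$ at $F_{\phi}(u_0)$), and your sandwich argument for part (1) is only a cosmetic variant of the paper's bounded-subsequence argument, both resting on Painlev\'e--Kuratowski convergence of the epigraphs. The one harmless bookkeeping omission is that the exceptional null set in part (2) should also exclude, for each $k$, the points $u$ at which $\epi(\phi_k)$ fails to have a unique outer normal at $F_{\phi_k}(u)$, so that (\ref{eq:radial-grad}) is legitimately applied to each $\phi_k$ and not only to $\phi$.
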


\begin{proof}
\begin{enumerate}
\item Since $\left\{ \phi_{k}\right\} _{k=1}^{\infty}$ is convergent it
is uniformly coercive, i.e. $\phi_{k}\ge\psi_{L}$ for some function
of the form $\psi_{L}(x)=a\left|x\right|+b$. We also know that for
all $x\in\overline{B}_{\epsilon}(0)$ we have 
\[
\phi_{k}(x)\le\min\phi_{k}+\frac{1}{2}\rightarrow\min\phi+\frac{1}{2},
\]
 so the functions $\left\{ \phi_{k}\right\} _{k=1}^{\infty}$ are
uniformly bounded on $\overline{B}_{\epsilon}(0)$. Therefore $\phi_{k}\le\psi_{U}$
for some function $\psi_{U}$ of the form $\psi_{U}=\oo_{\overline{B}_{\epsilon}(0)}^{\infty}+M$.
Therefore $0<s_{\psi_{U}}(u)\le s_{\phi_{k}}(u)\le s_{\psi_{L}}(u)<\infty$
for all $u$, i.e. the sequence $\left\{ s_{\phi_{k}}(u)\right\} _{k=1}^{\infty}$
is bounded. It is therefore enough to prove that for every converging
subsequence $s_{\phi_{k_{\ell}}}(u)\to\widetilde{s}$ we must have
$\widetilde{s}=s_{\phi}(u)$. 

We have $\gamma_{u}\left(s_{\phi_{k_{\ell}}}(u)\right)\in\epi\left(\phi_{k_{\ell}}\right)$
and $\gamma_{u}\left(s_{\phi_{k_{\ell}}}(u)\right)\to\gamma_{u}\left(\widetilde{s}\right)$.
Since $\epi(\phi_{k_{\ell}})\to\epi(\phi)$, it follows that $\gamma_{u}(\widetilde{s})\in\epi(\phi)$.
Therefore $\widetilde{s}\le s_{\phi}(u)$. 

Assume by contradiction that $\widetilde{s}<s_{\phi}(u)$, so $\gamma_{u}\left(\widetilde{s}\right)\in\inte\left(\epi(\phi)\right)$.
Choose a closed ball $\overline{B}$ around $\gamma_{u}\left(\widetilde{s}\right)$
such that $\overline{B}\subseteq\inte\left(\epi(\phi)\right)$. Since
$\gamma_{u}\left(s_{\phi_{k_{\ell}}}(u)\right)\to\gamma_{u}\left(\widetilde{s}\right)$
we clearly have $\gamma_{u}\left(s_{\phi_{k_{\ell}}}(u)\right)\in\overline{B}$
for all large enough $\ell$. But $\epi(\phi_{k_{\ell}})\to\epi(\phi)$,
so by \cite[Proposition 4.15]{RockafellarWetsVariational1998}, we
know that $\overline{B}\subseteq\inte\left(\epi(\phi_{k_{\ell}})\right)$
for all large enough $\ell$. This means that for large enough $\ell$
we have $\gamma_{u}\left(s_{\phi_{k_{\ell}}}(u)\right)\in\inte\left(\epi(\phi_{k_{\ell}})\right)$,
which is impossible. Therefore $\widetilde{s}=s_{\phi}(u)$ and the
proof is complete. 

The fact that we also have $F_{\phi_{k}}(u)\to F_{\phi}(u)$ is now
immediate from its definition. 
\item Fix $u$ such that all the functions $s_{\phi_{k}}$ differentiable
at $u$, every $\epi(\phi_{k})$ has a unique outer unit normal at
$F_{\phi_{k}}(u)$, and the same holds for $s_{\phi}$ and $\epi(\phi)$.
We first claim that since $\epi(\phi_{k})\to\epi(\phi)$ and $F_{\phi_{k}}(u)\to F_{\phi}(u)$
we must have 
\begin{equation}
n_{\epi(\phi_{k})}\left(F_{\phi_{k}}(u)\right)\to n_{\epi(\phi)}\left(F_{\phi}(u)\right).\label{eq:normal-converge}
\end{equation}
Indeed, choose any converging subsequence $n_{\epi(\phi_{k_{\ell}})}\left(F_{\phi_{k_{\ell}}}(u)\right)\to\widetilde{n}$.
Every $(x,t)\in\inte\left(\epi(\phi)\right)$ also belongs to $\epi(\phi_{k})$
for all large enough $k$, and therefore 
\[
\left\langle (x,t)-F_{\phi_{k_{\ell}}}(u),n_{\epi(\phi_{k_{\ell}})}\left(F_{\phi_{k_{\ell}}}(u)\right)\right\rangle \le0.
\]
 Letting $\ell\to\infty$ we have $\left\langle (x,t)-F_{\phi}(u),\widetilde{n}\right\rangle \le0$,
so $\widetilde{n}=n_{\epi(\phi)}\left(F_{\phi}(u)\right)$ since we
assumed the outer unit normal is unique. This proves the claim. 

Now the convergence $\nabla s_{\phi_{k}}(u)\to\nabla s_{\phi}(u)$
follows immediately from the explicit formula (\ref{eq:radial-grad}).
A direct differentiation shows that 
\begin{equation}
DF_{\phi}(u)=\left(\begin{array}{c}
\begin{array}{c}
\\\nabla s_{\phi}(u)\otimes u+s_{\phi}(u)\cdot I_{n}\\
\\\end{array}\\
\hline -\nabla s_{\phi}(u)/s_{\phi}(u)
\end{array}\right)\label{eq:DF-formula}
\end{equation}
 from which it is also clear that $DF_{\phi_{i}}(u)\to DF_{\phi}(u)$. 

\end{enumerate}
The last ingredient we need for our proof is a bound on the curvilinear
radial function of a simple convex function:
\end{proof}
\begin{lem}
\label{lem:radial-bound}Define $\psi\in\cvx$ by $\psi(x)=a\left|x\right|+b$
for $a>0$ and $b\in\RR$. Then $s_{\psi}(u)\le C\frac{\log(1+\left|u\right|)}{\left|u\right|}$
for all $u\in\RR^{n}$, where $C>0$ depends only on $a$ and $b$. 
\end{lem}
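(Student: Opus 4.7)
The plan is to use the defining equation for $s_\psi$ and perform a direct estimate in two regimes.

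First, I would observe the key identity. Since $\psi$ has full domain and is continuous, the point $F_\psi(u) = (s_\psi(u)u,\log(1/s_\psi(u)))$ lies on the graph of $\psi$, so
\[
a\,s_\psi(u)\,|u| + b = \psi(s_\psi(u)\,u) = -\log s_\psi(u).
\]
Rather than solve this transcendental equation, I would use the following comparison principle, which is immediate from Proposition \ref{prop:radial-def}: if $s_0 > 0$ satisfies $\psi(s_0 u) \ge -\log s_0$ (equivalently $\gamma_u(s_0) \notin \inte(\epi(\psi))$), then $s_\psi(u) \le s_0$.

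Second, I would handle the regime $|u|$ large. Set $s_0 = C\,\log(1+|u|)/|u|$ for a constant $C$ to be chosen. Then the comparison condition becomes
\[
aC\,\log(1+|u|) + b \;\ge\; \log|u| - \log C - \log\log(1+|u|).
\]
For $|u|\ge e-1$ we have $\log\log(1+|u|)\ge 0$ and $\log(1+|u|) \ge \log|u|$, so it suffices that $(aC-1)\log|u| + b + \log C \ge 0$. Choosing $C = 2/a$ (so $aC = 2$), this inequality holds for all $|u|$ larger than some threshold $R = R(a,b)$.

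Third, for the bounded regime $0 < |u| \le R$, I would use that $s_\psi$ is continuous on $\RR^n$ (it is in fact locally Lipschitz by the proposition preceding Proposition \ref{prop:radial-continuous}) and that $|u|\mapsto\log(1+|u|)/|u|$ is continuous and bounded below by a positive constant on the compact set $\{|u|\in (0,R]\}\cup\{0\}$ (interpreting the value at $0$ as the limit $1$). Hence the ratio $s_\psi(u)\cdot|u|/\log(1+|u|)$ is bounded on this set, and by enlarging $C$ if necessary the desired inequality holds there too.

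There is no real obstacle here; the entire argument is an algebraic manipulation of the defining equation. The only mildly delicate point is verifying that the class $\cvx^{(\epsilon)}$ contains $\psi$ (one checks $\min\psi = b$ is attained at $0$ and $\psi(x)\le b+\tfrac12$ on $\overline{B}_{1/(2a)}(0)$, so $\psi\in\cvx^{(1/(2a))}$) so that $s_\psi$ is well-defined in the first place, and handling the behavior as $|u|\to 0$, where the right-hand side tends to $C$ while $s_\psi(0) = e^{-b}$.
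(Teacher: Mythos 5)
Your proof is correct and rests on the same mechanism as the paper's: exhibit a value $\sigma$ with $\gamma_{u}(\sigma)\notin\inte\left(\epi(\psi)\right)$, i.e.\ $\psi(\sigma u)\ge\log\frac{1}{\sigma}$, and conclude $s_{\psi}(u)\le\sigma$. The only difference is cosmetic: the paper verifies this inequality for $\sigma=C\frac{\log(1+\left|u\right|)}{\left|u\right|}$ uniformly in $u$ with the explicit constant $C=\max\{\frac{1}{a},e^{-b}\}$ by using $\log(1+\left|u\right|)\ge\frac{\left|u\right|}{1+\left|u\right|}$, whereas you split into a large-$\left|u\right|$ regime and a compactness argument near the origin, which works equally well.
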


\begin{proof}
We choose $C=\max\left\{ \frac{1}{a},e^{-b}\right\} $. Plugging $a=-\frac{\left|u\right|}{1+\left|u\right|}$
into the standard inequality $\log(1+a)\le a$ we see that $\log\left(1+\left|u\right|\right)\ge\frac{\left|u\right|}{1+\left|u\right|}$.
Therefore for $\sigma=C\frac{\log(1+\left|u\right|)}{\left|u\right|}$
we have 
\begin{align*}
\log\left(\frac{1}{\sigma}\right) & \le\log\left(\frac{1+\left|u\right|}{C}\right)=\log\left(1+\left|u\right|\right)-\log C\\
 & \le aC\log(1+\left|u\right|)+b=\psi(\sigma u).
\end{align*}
 Therefore $s_{\psi}(u)\le\sigma$ as claimed. 
\end{proof}
With all the ingredient in place we are finally ready to prove Theorem
\ref{thm:measures-cont}:
\begin{proof}[Proof of Theorem \ref{thm:measures-cont}.]

Fix $\left\{ f_{k}\right\} _{k=1}^{\infty},f\in\lc$ such that $f_{k}\to f$.
Write as usual $f_{k}=e^{-\phi_{k}}$ and $f=e^{-\phi}$. Our goal
is to prove that for all cosmically continuous functions $\xi:\RR^{n}\to\RR$
we have 
\[
\int_{\RR^{n}}\xi\dd\mu_{f_{k}}+\int_{\RR^{n}}\overline{\xi}\dd\mu_{f_{k}}\to\int_{\RR^{n}}\xi\dd\mu_{f}+\int_{\RR^{n}}\overline{\xi}\dd\mu_{f},
\]
which by (\ref{eq:var-alternative}) is equivalent to 
\[
\int_{\partial\epi(\phi_{k})}\widehat{\xi}\left(n_{\epi(\phi_{k})}(x,t)\right)e^{-t}\dd\HH^{n}(x,t)\to\int_{\partial\epi(\phi)}\widehat{\xi}\left(n_{\epi(\phi)}(x,t)\right)e^{-t}\dd\HH^{n}(x,t).
\]

Using Lemma \ref{lem:cvx-eps-properties}(\ref{enu:cvx-eps-trans}),
we may translate all functions by the same vector $v$ and assume
that $\phi\in\cvx^{(\epsilon)}$ for some $0<\epsilon<\frac{1}{2}$.
This does not change the measures $\left(\mu_{f_{k}},\nu_{f_{k}}\right)$
or $\left(\mu_{f},\nu_{f}\right)$. By part (\ref{enu:cvx-eps-open})
of the same lemma we know that $\phi_{k}\in\cvx^{(\epsilon/2)}$ for
all large enough $k$. We may therefore perform the change of variables
$(x,t)=F_{\phi_{k}}(u)$ on the left hand side and $(x,t)=F_{\phi}(u)$
on the right hand side, and conclude that our goal is equivalent to
\[
\int_{\RR^{n}}\widehat{\xi}\left(n_{\epi(\phi_{k})}(F_{\phi_{k}}(u))\right)s_{\phi_{k}}(u)\left|JF_{\phi_{k}}(u)\right|\dd u\to\int_{\RR^{n}}\widehat{\xi}\left(n_{\epi(\phi)}(F_{\phi}(u))\right)s_{\phi}(u)\left|JF_{\phi}(u)\right|\dd u.
\]
Here of course $JF_{\phi}$ denotes the Jacobian of the map $F_{\phi}$.

By (\ref{eq:normal-converge}) we know that $n_{\epi(\phi_{k})}(F_{\phi_{k}}(u))\to n_{\epi(\phi)}(F_{\phi}(u))$
almost everywhere, and since $\widehat{\xi}$ is continuous $\widehat{\xi}\left(n_{\epi(\phi_{k})}(F_{\phi_{k}}(u))\right)\to\widehat{\xi}\left(n_{\epi(\phi)}(F_{\phi}(u))\right)$
almost everywhere. By Proposition \ref{prop:radial-continuous} we
also know that $s_{\phi_{k}}(u)\to s_{\phi}(u)$ and 
\[
JF_{\phi_{k}}(u)=\sqrt{\det\left(\left(DF_{\phi_{k}}(u)\right)^{t}DF_{\phi_{k}}(u)\right)}\to\sqrt{\det\left(\left(DF_{\phi}(u)\right)^{t}DF_{\phi}(u)\right)}=JF_{\phi}(u)
\]
almost everywhere. Therefore in order to conclude the proof it is
enough to justify the use of the dominated convergence theorem. 

Since $\phi_{k}\to\phi$, there exists $\psi(x)=a\left|x\right|+b$
such that $\phi_{k}\ge\psi$ for all $k$, and then by Lemma \ref{lem:radial-bound}
\[
s_{\phi_{k}}(u)\le s_{\psi}(u)\le C\frac{\log(1+\left|u\right|)}{\left|u\right|}
\]
 for all $u\in\RR^{n}$. Here and after we use $C>0$ to denote some
constant that does not depend on $k$ or $u$, whose exact value may
change from line to line. 

We saw in (\ref{eq:DF-formula}) that $DF_{\phi_{k}}$ is an $(n+1)\times n$
matrix of the form $DF_{\phi}(u)=\left(\begin{array}{c}
\begin{array}{c}
A\end{array}\\
\hline w
\end{array}\right)$ where $A=\nabla s_{\phi}(u)\otimes u+s_{\phi}(u)\cdot I_{n}$ and
$w=-\nabla s_{\phi}(u)/s_{\phi}(u)$. Therefore 
\begin{align}
JF_{\phi_{k}}(u) & =\sqrt{\det\left(\left(DF_{\phi_{k}}(u)\right)^{t}DF_{\phi_{k}}(u)\right)}=\sqrt{\det\left(A^{t}A+w\otimes w\right)}\le\left\Vert A^{t}A+w\otimes w\right\Vert ^{\frac{n}{2}}\label{eq:JF-bound}\\
 & \le\left(\left\Vert A\right\Vert ^{2}+\left|w\right|^{2}\right)^{\frac{n}{2}}\le\left(\left(\left|\nabla s_{\phi}(u)\right|\left|u\right|+s_{\phi}(u)\right)^{2}+\left|w\right|^{2}\right)^{\frac{n}{2}}.\nonumber 
\end{align}
 Here $\left\Vert \cdot\right\Vert $ denotes the operator norm, or
the largest singular value. 

Using the explicit formula (\ref{eq:radial-grad}) and Lemma \ref{lem:cvx-eps-properties}(\ref{enu:cvx-eps-bound})
we see that 
\[
\left|w\right|=\frac{\left|\nabla s_{\phi_{k}}(u)\right|}{s_{\phi_{k}}(u)}=s_{\phi_{k}}(u)\cdot\frac{\left|n_{\epi(\phi_{k})}^{(x)}(x,t)\right|}{\left\langle n_{\epi(\phi_{k})}(x,t),(x,-1)\right\rangle }\le C\frac{\log(1+\left|u\right|)}{\left|u\right|}\cdot\frac{1}{\epsilon/2}=C\frac{\log(1+\left|u\right|)}{\left|u\right|}.
\]
Similarly 
\[
\left|\nabla s_{\phi_{k}}(u)\right|\left|u\right|\le C\left(\frac{\log(1+\left|u\right|)}{\left|u\right|}\right)^{2}\left|u\right|\le C\frac{\log^{2}(1+\left|u\right|)}{\left|u\right|}.
\]
 Plugging these estimates into (\ref{eq:JF-bound}) and using the
fact that $\widehat{\xi}$ is bounded on $\overline{\SS_{-}^{n}}$
we obtain 
\begin{align*}
\left|\widehat{\xi}\left(n_{\epi(\phi_{k})}(F_{\phi_{k}}(u))\right)s_{\phi_{k}}(u)\left|JF_{\phi_{k}}(u)\right|\right| & \le C\frac{\log\left(1+\left|u\right|\right)}{\left|u\right|}\cdot\left(\frac{\log^{2}(1+\left|u\right|)+\log\left(1+\left|u\right|\right)}{\left|u\right|}\right)^{n}\\
 & \le C\frac{\max\left\{ \log^{2n+1}(1+\left|u\right|),\log^{n+1}(1+\left|u\right|)\right\} }{\left|u\right|^{n+1}.}
\end{align*}
 As this function is integrable on $\RR^{n}$ the use of dominated
convergence is justified, and the proof is complete. 
\end{proof}

\section{\label{sec:reverse-continuity}Continuity of solution to the Minkowski
problem}

In this section we prove Theorem \ref{thm:functions-cont}. We begin
by collecting some preliminary facts. First, we will need the following
result of Colesanti and Fragalà:
\begin{prop}[{\cite[Proposition 3.11]{ColesantiFragalaFirst2013}}]
\label{prop:d(f,f)}For all $f\in\lc$ we have 
\[
\delta(f,f)=n\int f+\int f\log f.
\]

\end{prop}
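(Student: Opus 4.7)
My plan is to avoid the heavy surface-area machinery entirely and exploit the fact that $\delta(f,f)$ involves sup-convolving $f$ with a dilate of itself, so the whole computation reduces to a single variable calculus exercise.

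The key observation is the identity $(\lambda \cdot f) \star (\mu \cdot f) = (\lambda+\mu)\cdot f$ recorded in the introduction. Specializing with $\lambda = 1$ and $\mu = t$ gives
\[
f \star (t\cdot f) = (1+t)\cdot f,
\]
so by the explicit definition of the dilation,
\[
\int_{\RR^n} f\star(t\cdot f)\,\dd x = \int_{\RR^n} f\!\left(\tfrac{x}{1+t}\right)^{1+t}\dd x = (1+t)^{n}\int_{\RR^n} f^{1+t}\,\dd x,
\]
where the last equality is the change of variable $z = x/(1+t)$. Writing $I(t) = (1+t)^{n}\int f^{1+t}$, we have $\delta(f,f) = I'(0^+)$.

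Formal differentiation gives
\[
I'(0) \;=\; n\int_{\RR^n} f \;+\; \left.\frac{\dd}{\dd t}\right|_{t=0}\!\!\int_{\RR^n} e^{-(1+t)\phi}\dd x \;=\; n\int f + \int f\log f,
\]
which is exactly the claimed identity. All that remains is to justify interchanging the derivative and the integral, and this is the only delicate step. The integrand $e^{-(1+t)\phi(x)}\phi(x)$ has to be dominated uniformly for $t$ in a small neighborhood of $0$. I would argue as follows: since $\phi \in \cvx$ it is coercive, $\phi(x) \ge a|x| + b$ for some $a>0$, and $e^{-\phi}$ is bounded above; both properties extend to $e^{-(1\pm\epsilon)\phi}$ for small $\epsilon > 0$. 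On the set $\{\phi \ge 0\}$ one bounds $e^{-(1+t)\phi}|\phi| \le e^{-(1-\epsilon)\phi}\phi$ and uses coercivity; on $\{\phi < 0\}$ (which is bounded) $|\phi|$ is bounded by a constant and $e^{-(1+t)\phi} \le e^{(1+\epsilon)|\min\phi|}$, giving an integrable majorant in either case. Dominated convergence then validates the differentiation.

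The only conceptual step is thus the first one, the identification $f\star(t\cdot f) = (1+t)\cdot f$; everything after is routine. I expect no real obstacle, and the argument is essentially the one given in \cite{ColesantiFragalaFirst2013}.
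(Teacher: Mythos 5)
Your proof is correct. The paper itself gives no proof of this proposition---it is quoted directly from Colesanti--Fragal\`a---and your argument is precisely the one in that reference: the identity $f\star(t\cdot f)=(1+t)\cdot f$ reduces everything to differentiating $I(t)=(1+t)^{n}\int f^{1+t}$ at $t=0$, and your domination (coercivity of $\phi$ plus boundedness of $\{\phi<0\}$ and of $\min\phi$) is enough to justify the interchange of limit and integral. No gaps.
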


Next, we need an isoperimetric inequality for log-concave functions,
first shown in \cite{MilmanRotemMixed2013}:
\begin{prop}[{\cite[Proposition 27]{MilmanRotemMixed2013}}]
\label{prop:isoperimetric}For all $f\in\lc$ we have
\[
\delta(f,\oo_{B_{2}^{n}})\ge c_{n}\left(\max f\right)^{\frac{1}{n}}\left(\int f\right)^{1-\frac{1}{n}},
\]
for some constant $c_{n}>0$ that depends only on the dimension $n$. 
\end{prop}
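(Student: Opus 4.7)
Because $\delta(cf,g)=c\,\delta(f,g)$ and $\int cf=c\int f$, both sides of the inequality scale linearly in $f$, so I may normalize to $\max f=1$ and write $f=e^{-\phi}$ with $\min\phi=0$. The starting point is the level-set formula used in the proof of Proposition \ref{prop:minkowksi-necessary}: for $g=\oo_L$ one has $\delta(f,g)=n\int_0^\infty V_1(F_s,L)\,\dd s$ where $F_s=\{f\ge s\}$. Specializing to $L=B_{2}^{n}$ and using the standard identity $nV_1(K,B_{2}^{n})=S(K)$, the classical perimeter of the convex body $K$, gives
\[
\delta(f,\oo_{B_{2}^{n}})=\int_0^1 S(F_s)\,\dd s\ \ge\ n\omega_n^{1/n}\int_0^1 |F_s|^{(n-1)/n}\,\dd s
\]
by the Euclidean isoperimetric inequality applied to each convex $F_s$. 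It thus suffices to bound the last integral from below by a dimensional multiple of $\left(\int f\right)^{(n-1)/n}$.

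Writing $v(t)=|\{\phi\le t\}|$ and changing variables via $s=e^{-t}$ converts the two relevant quantities into
\[
\int f=\int_0^\infty v(t)e^{-t}\,\dd t,\qquad\int_0^1|F_s|^{(n-1)/n}\,\dd s=\int_0^\infty v(t)^{(n-1)/n}e^{-t}\,\dd t.
\]
The Brunn--Minkowski inequality applied to the sublevel sets of the convex function $\phi$ ensures that $H(t):=v(t)^{1/n}$ is non-decreasing and concave on $[0,\infty)$ with $H(0)\ge 0$. The proposition is therefore reduced to the one-dimensional inequality
\[
\int_0^\infty H(t)^{n-1}e^{-t}\,\dd t\ \ge\ c'_n\left(\int_0^\infty H(t)^n e^{-t}\,\dd t\right)^{(n-1)/n}
\]
for every non-negative, concave, non-decreasing $H$ on $[0,\infty)$.

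To prove this last inequality, the key observation is that such an $H$ satisfies
\[
H(s)\min(1,t/s)\ \le\ H(t)\ \le\ H(s)\max(1,t/s)\qquad\text{for all }s,t>0.
\]
The lower bound follows from $H(t)\ge(t/s)H(s)+(1-t/s)H(0)\ge(t/s)H(s)$ for $t\le s$ together with monotonicity, and the upper bound from the analogous concavity argument at $t\ge s$. Applying the sandwich with $s=1$ and writing $c=H(1)$, the two integrals are controlled by $c^{n-1}A_n$ and $c^n B_n$ respectively, where $A_n=\int_0^\infty\min(1,t)^{n-1}e^{-t}\,\dd t$ and $B_n=\int_0^\infty\max(1,t)^n e^{-t}\,\dd t$ depend only on $n$; the factors of $c$ cancel in the ratio, yielding the desired inequality with $c'_n=A_n/B_n^{(n-1)/n}$. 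The main obstacle is keeping track of the various dimensional constants and verifying that the sandwich argument remains valid in the degenerate case $H(0)>0$, which corresponds to $f$ attaining its maximum on a set of positive measure; the bound above is manifestly unaffected by this case.
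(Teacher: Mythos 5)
Your argument is correct, and it is necessarily a different route from the paper's, because the paper gives no proof at all: it simply cites \cite[Proposition 27]{MilmanRotemMixed2013}. Your chain of steps checks out: the scaling $\delta(cf,g)=c\,\delta(f,g)$ justifies the normalization $\max f=1$; the level-set formula $\delta(f,\oo_L)=n\int_0^\infty V_1(F_s,L)\,\dd s$ (the same fact from \cite{RotemAnisotropic2023} used in the proof of Proposition \ref{prop:minkowksi-necessary}) together with $nV_1(K,B_2^n)=S(K)$ and the classical isoperimetric inequality reduces everything to $\int_0^\infty v(t)^{(n-1)/n}e^{-t}\dd t\gtrsim_n\left(\int_0^\infty v(t)e^{-t}\dd t\right)^{(n-1)/n}$; Brunn--Minkowski gives concavity of $H=v^{1/n}$ (note that since $\phi$ is lower semicontinuous and coercive its minimum is attained, so all sublevel sets for $t\ge0$ are nonempty and the concavity argument applies); and the sandwich $H(1)\min(1,t)\le H(t)\le H(1)\max(1,t)$ is a valid consequence of concavity, monotonicity and $H(0)\ge0$, with the powers of $H(1)$ cancelling exactly as you say. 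The only point you leave implicit is the case $H(1)=0$, but it is harmless: either one notes that concavity plus $H\ge0$ forces $H\equiv0$ (impossible since $\int f>0$), or one observes that your upper bound then makes the right-hand side of the one-dimensional inequality zero, so it holds trivially. Compared with the cited source, your proof is more elementary and self-contained but gives a non-sharp constant $c_n=n\omega_n^{1/n}A_n/B_n^{(n-1)/n}$, whereas Milman--Rotem obtain the optimal constant, attained by $f(x)=e^{-|x|}$; since the paper only ever uses the existence of some dimensional constant $c_n>0$, your version would serve equally well for the application in Section \ref{sec:reverse-continuity}.
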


In fact, it was shown in \cite{MilmanRotemMixed2013} that optimal
value of the constant $c_{n}$ is attained when $f(x)=e^{-\left|x\right|}$.
For us however the exact value of $c_{n}$ and the nature of the minimizers
will not play any role. 

Finally, we need the following strengthening of Lemma \ref{lem:functional-bound}(\ref{enu:norm-comparison-one}):
\begin{lem}
\label{lem:integral-lower-bound}Fix $\left\{ f_{k}\right\} _{k=1}^{\infty},f\in\lc$
and assume that $\left(\mu_{f_{k}},\nu_{f_{k}}\right)\to\left(\mu_{f},\nu_{f}\right)$
cosmically. Then there exists a constant $c>0$, such that for all
$y\in\RR^{n}$ and all $k$ we have
\[
\int_{\RR^{n}}\left|\left\langle x,y\right\rangle \right|\dd\mu_{f_{k}}(x)+\int_{\SS^{n-1}}\left|\left\langle x,y\right\rangle \right|\dd\nu_{f_{k}}(x)\ge c\left|y\right|.
\]
\end{lem}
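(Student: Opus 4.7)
The plan is to reduce everything, by $1$-homogeneity, to showing that $\inf_{k}\min_{y\in\SS^{n-1}}p_{k}(y)>0$, where
\[
p_{k}(y)=\int_{\RR^{n}}\left|\left\langle x,y\right\rangle \right|\dd\mu_{f_{k}}(x)+\int_{\SS^{n-1}}\left|\left\langle x,y\right\rangle \right|\dd\nu_{f_{k}}(x),
\]
and similarly $p(y)$ for the limit $f$. The key observation is that the integrand $\xi_{y}(x)=\left|\left\langle x,y\right\rangle \right|$ is cosmically continuous with $\overline{\xi_{y}}(\theta)=\left|\left\langle \theta,y\right\rangle \right|$, so that by cosmic convergence one has $p_{k}(y)\to p(y)$ for every $y\in\RR^{n}$. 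More conceptually, via Section \ref{sec:cosmic} one has $p_{k}(y)=\int_{\overline{\SS_{-}^{n}}}\widehat{\xi_{y}}\dd\lambda_{k}$ where $\lambda_{k}=\widehat{\mu_{f_{k}}}+\widehat{\nu_{f_{k}}}$ converges weakly on the compact set $\overline{\SS_{-}^{n}}$ to $\lambda=\widehat{\mu_{f}}+\widehat{\nu_{f}}$, and $\widehat{\xi_{y}}(x,t)=\left|\left\langle x,y\right\rangle \right|$ is itself a continuous function on $\overline{\SS_{-}^{n}}$.

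The second step I would carry out is to upgrade pointwise convergence to uniform convergence on the compact sphere $\SS^{n-1}$. Since $\left|x\right|\le1$ on $\overline{\SS_{-}^{n}}$, for $y,y'\in\SS^{n-1}$ we have $\left|\widehat{\xi_{y}}(x,t)-\widehat{\xi_{y'}}(x,t)\right|\le\left|y-y'\right|$, whence
\[
\left|p_{k}(y)-p_{k}(y')\right|\le\lambda_{k}\left(\overline{\SS_{-}^{n}}\right)\cdot\left|y-y'\right|.
\]
Weak convergence forces $\sup_{k}\lambda_{k}(\overline{\SS_{-}^{n}})<\infty$, so the family $\{p_{k}\}$ is uniformly Lipschitz on $\SS^{n-1}$. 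Combined with pointwise convergence, Arzelà--Ascoli (or a direct finite-net argument using the equicontinuity) yields $p_{k}\to p$ uniformly on $\SS^{n-1}$.

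The final step is to invoke the already-established non-degeneracy. Since $(\mu_{f},\nu_{f})$ satisfies the hypotheses of Theorem \ref{thm:main-Minkowksi} by Proposition \ref{prop:minkowksi-necessary}, Lemma \ref{lem:functional-bound}(\ref{enu:norm-comparison-one}) applied to $f$ gives $c_{f}>0$ with $p(y)\ge c_{f}$ on $\SS^{n-1}$. By the uniform convergence above, there exists $k_{0}$ with $p_{k}(y)\ge c_{f}/2$ on $\SS^{n-1}$ for all $k\ge k_{0}$. For each of the finitely many remaining indices $k<k_{0}$, Lemma \ref{lem:functional-bound}(\ref{enu:norm-comparison-one}) applied to $f_{k}$ shows $p_{k}$ is an (honest) norm on $\RR^{n}$, hence achieves a strictly positive minimum $c_{k}$ on the compact $\SS^{n-1}$. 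Taking $c=\min\{c_{f}/2,c_{1},\ldots,c_{k_{0}-1}\}>0$ and restoring homogeneity finishes the proof.

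The only substantive point requiring care is the uniform convergence step --- concretely, verifying that weak convergence of finite Borel measures on a compact space, when tested against a uniformly bounded, uniformly equicontinuous family, produces uniform convergence of the integrals. All other ingredients are direct applications of results already proved in the paper.
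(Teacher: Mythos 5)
Your proposal is correct and follows essentially the same route as the paper: pointwise convergence of $p_{k}$ from cosmic convergence, an upgrade to uniform convergence on $\SS^{n-1}$, the lower bound $p\ge c_{f}$ from Lemma \ref{lem:functional-bound}(\ref{enu:norm-comparison-one}), and a finite minimum over the initial indices. The only (valid) deviation is in the uniform-convergence step, where you use equicontinuity coming from the uniform bound on the total masses $\lambda_{k}\bigl(\overline{\SS_{-}^{n}}\bigr)$, whereas the paper invokes the fact that pointwise convergence of the finite convex functions $p_{k}$ is automatically uniform on compact sets.
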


Of course, the main point of this strengthening is that the constant
$c$ is not allowed to depend on $k$. 
\begin{proof}
Define a sequence of functions $\Phi_{k}:\RR^{n}\to\RR$ by 
\[
\Phi_{k}(y)=\int_{\RR^{n}}\left|\left\langle x,y\right\rangle \right|\dd\mu_{f_{k}}(x)+\int_{\SS^{n-1}}\left|\left\langle x,y\right\rangle \right|\dd\nu_{f_{k}}(x),
\]
 and similarly define 
\[
\Phi(y)=\int_{\RR^{n}}\left|\left\langle x,y\right\rangle \right|\dd\mu_{f_{}}(x)+\int_{\SS^{n-1}}\left|\left\langle x,y\right\rangle \right|\dd\nu_{f}(x).
\]

By Lemma \ref{lem:functional-bound}, there exist positive constants
$\left\{ c_{k}\right\} _{k=0}^{\infty}$ such that for all $y\in\RR^{n}$
we have $\Phi(y)\ge c_{0}\left|y\right|$ and $\Phi_{k}(y)\ge c_{k}\left|y\right|$.
Since the function $\ell_{y}(x)=\left|\left\langle x,y\right\rangle \right|$
is cosmically continuous and $\overline{\ell_{y}}=\ell_{y}$, it follows
from the definition of cosmic convergence that $\Phi_{k}(y)\to\Phi(y)$
for all $y\in\RR^{n}$. Since the functions $\left\{ \Phi_{k}\right\} _{k=1}^{\infty}$
are convex and finite, the pointwise convergence $\Phi_{k}\to\Phi$
implies uniform convergence on compact subsets of $\RR^{n}$ (see
e.g. \cite[Theorem 7.17(c)]{RockafellarWetsVariational1998}). In
particular 
\[
\min_{y\in\SS^{n-1}}\Phi_{k}(y)\to\min_{y\in\SS^{n-1}}\Phi(y)\ge c_{0}.
\]
 Therefore there exists $K>0$ such that for all $k>K$ we have and
all $y\in\SS^{n-1}$ we have $\Phi_{k}(y)\ge\frac{c_{0}}{2}$. It
is therefore enough to choose $c=\min\left\{ \frac{c_{0}}{2},c_{1},c_{2},\ldots,c_{K}\right\} $. 
\end{proof}
We are now ready to prove Theorem \ref{thm:functions-cont}:
\begin{proof}[Proof of Theorem \ref{thm:functions-cont}.]
 Assume by contradiction that we are given $\left\{ f_{k}\right\} _{k=1}^{\infty},f\in\lc$
such that $\left(\mu_{f_{k}},\nu_{f_{k}}\right)\to\left(\mu_{f},\nu_{f}\right)$
cosmically but $f_{k}\not\to f$, even up to translations. Fix any
metric $d$ on $\lc$ which generates our notion of convergence (examples
to such metrics were given in \cite{LiMussnigMetrics2022}), and define
\[
\widetilde{d}(f,g)=\inf_{v\in\RR^{n}}d(f,\tau_{v}g)
\]
 where $\tau_{v}g(x)=g(x+v)$. Our assumption precisely means that
$\widetilde{d}(f_{k},f)\not\to0$. By passing to a sub-sequence we
may assume without loss of generality that there exists an $\epsilon>0$
such that $\widetilde{d}(f_{k},f)\ge\epsilon$ for all $k$. 

Write as usual $f_{k}=e^{-\phi_{k}}$ and $f=e^{-\phi}$. Since we
allow translations, we may assume without loss of generality that
$\min\phi_{k}=\phi_{k}(0)$ for all $k$. Combining Lemma \ref{lem:integral-lower-bound}
with the second part of Lemma \ref{lem:functional-bound} we see that
there exists a constant $c>0$ such for every $k$, every $\psi\in\cvx$
with $\min\psi=\psi(0)$ and every $x\in\RR^{n}$ we have 
\[
\psi(x)\ge\frac{1}{\mu_{f_{k}}(\RR^{n})}\left(c\left|x\right|-\int_{\RR^{n}}\psi^{\ast}\dd\mu_{f_{k}}-\int_{\SS^{n-1}}\overline{\psi^{\ast}}\dd\nu_{f_{k}}\right).
\]
We now choose $\psi=\phi_{k}$ and use the first variation formula
\ref{eq:representation} and Proposition \ref{prop:d(f,f)} to conclude
that 
\begin{align}
\phi_{k}(x) & \ge\frac{1}{\mu_{f_{k}}(\RR^{n})}\left(c\left|x\right|-\int_{\RR^{n}}\phi_{k}^{\ast}\dd\mu_{f_{k}}-\int_{\SS^{n-1}}\overline{\phi_{k}^{\ast}}\dd\nu_{f_{k}}\right)=\frac{1}{\mu_{f_{k}}(\RR^{n})}\left(c\left|x\right|-\delta(f_{k},f_{k})\right)\nonumber \\
 & =\frac{1}{\mu_{f_{k}}(\RR^{n})}\left(c\left|x\right|-n\int f_{k}-\int f_{k}\log f_{k}\right)=\frac{c}{\int f_{k}}\left|x\right|-n-\frac{\int f_{k}\log f_{k}}{\int f_{k}}.\label{eq:uniform-coercive}
\end{align}
 Note that we also used the fact that $\mu_{f_{k}}\left(\RR^{n}\right)=\int f_{k}$
which is obvious from the definition of $\mu_{f_{k}}$. 

Using the fact that the constant function $\oo$ is cosmically continuous
we obtain
\[
\int f_{k}=\mu_{f_{k}}(\RR^{n})=\int\oo\dd\mu_{f_{k}}+\int\overline{\oo}\dd\nu_{f_{k}}\to\int\oo\dd\mu_{f}+\int\overline{\oo}\dd\nu_{f}=\mu_{f}(\RR^{n})=\int f,
\]
 so in particular $\left\{ \int f_{k}\right\} _{k=1}^{\infty}$ is
bounded from above and from below by some positive constants. Similarly
using the fact that $x\mapsto\left|x\right|$ is cosmically continuous
and (\ref{eq:representation}) we have 
\[
\delta(f_{k},\oo_{B_{2}^{n}})=\int\left|x\right|\dd\mu_{f_{k}}+\int\left|x\right|\dd\nu_{f_{k}}\to\int\left|x\right|\dd\mu_{f}+\int\left|x\right|\dd\nu=\delta(f,\oo_{B_{2}^{n}}),
\]
 so the sequence $\left\{ \delta(f_{k},\oo_{B_{2}^{n}})\right\} _{k=1}^{\infty}$
is also bounded. Using Proposition \ref{prop:isoperimetric} we have
\[
\max f_{k}\le C_{n}\frac{\delta(f_{k},\oo_{B_{2}^{n}})^{n}}{\left(\int f_{k}\right)^{n-1}},
\]
 so the sequence $\left\{ \max f_{k}\right\} _{k=1}^{\infty}$ is
also bounded from above. Therefore 
\[
\frac{\int f_{k}\log f_{k}}{\int f_{k}}\le\frac{\max\left(\log f_{k}\right)\int f_{k}}{\int f_{k}}=\log\left(\max f_{k}\right)
\]
 is also bounded from above. Plugging all these estimates into (\ref{eq:uniform-coercive})
we conclude that the sequence $\left\{ \phi_{k}\right\} _{k=1}^{\infty}$
is uniformly coercive. Since $\left\{ \int e^{-\phi_{k}}\right\} _{k=1}^{\infty}=\left\{ \int f_{k}\right\} _{k=1}^{\infty}$
is bounded from below by a positive constant, the same argument as
in Proposition \ref{prop:minimizer-exists} shows that $\sup_{k}\left(\min\phi_{k}\right)<\infty$. 

Theorem \ref{thm:selection} now allows us to pass to another sub-sequence
and assume without loss of generality that $f_{k}\to g$ for some
$g\in\lc$ (the fact that $\int g>0$ was not part of the theorem,
but it follows from the fact that $\int g=\lim_{k\to\infty}\int f_{k}$).
By Theorem \ref{thm:measures-cont} it follows that $\left(\mu_{f_{k},}\nu_{f_{k}}\right)\to(\mu_{g},\nu_{g})$
cosmically. But we also have $\left(\mu_{f_{k},}\nu_{f_{k}}\right)\to(\mu_{f},\nu_{f})$,
and since the cosmic limit is clearly unique we have $(\mu_{g},\nu_{g})=(\mu_{f},\nu_{f})$.
But then by the uniqueness part of Theorem \ref{thm:main-Minkowksi}
we must have $g=\tau_{v}f$ for some $v\in\RR^{n}$. This implies
that 
\[
\widetilde{d}(f_{k},f)=\widetilde{d}(f_{k},g)\le d(f_{k},g)\to0,
\]
 contradicting our assumption that $\widetilde{d}(f_{k},f)\ge\epsilon$
for all $k$. 
\end{proof}
\bibliographystyle{plain}
\bibliography{/Users/lrotem/Library/CloudStorage/OneDrive-Technion/citations}

\end{document}